\title{On Dimensions, Standard Part Maps, and $p$-Adically Closed Fields}
\date{\today}
\author{Ningyuan Yao\\Fudan University}
\newtheorem{Thm}{Theorem}[section]
\newtheorem{Prop}[Thm]{Proposition}
\newtheorem{Def}[Thm]{Definition}
\newtheorem{Rmk}[Thm]{Remark}
\newtheorem{Lemma}[Thm]{Lemma}
\newtheorem{Cor}[Thm]{Corollary}
\newtheorem{Fact}[Thm]{Fact}
\newtheorem*{Claim1}{Claim 1}
\newtheorem*{Claim2}{Claim 2}
\newtheorem*{Claim3}{Claim 3}
\newtheorem*{Claim4}{Claim 4}
\newtheorem*{Claim5}{Claim 5}
\newtheorem*{Thm1}{Theorem 1}
\newtheorem*{Thm2}{Theorem 2}
\newtheorem*{Hensel's Lemma}{Hensel's Lemma}
\newtheorem*{Claim}{Claim}
\newcommand{\R}{\mathbb R}
\newcommand{\Q}{{\mathbb Q}_p}
\newcommand{\Z}{{\mathbb Z}_p}
\newcommand{\N}{\mathbb N}
\newcommand{\K}{\mathbb K}
\newcommand{\sq}{\subseteq}
\DeclareMathOperator{\Th}{Th}
\DeclareMathOperator{\Int}{Int}
\DeclareMathOperator{\st}{st}
\DeclareMathOperator{\acl}{acl}
\begin{document}
\maketitle
\begin{abstract} The aim of this paper is to study the dimensions and standard part maps between the field of $p$-adic numbers $\Q$ and its elementary extension $K$ in the language of rings $L_r$.

We show that for any $K$-definable set $X\sq K^m$, $\dim_K(X)\geq \dim_{\Q}(X\cap \Q^m)$. Let $V\sq K$ be convex hull of $K$ over $\Q$, and $\st: V\rightarrow \Q$ be the standard part map.  We show that for any $K$-definable function $f:K^m\rightarrow K$, there is definable subset $D\sq\Q^m$ such that $\Q^m\backslash D$ has no interior, and for all $x\in D$, either $f(x)\in V$ and $\st(f(\st^{-1}(x)))$ is constant, or $f(\st^{-1}(x))\cap V=\emptyset$.

We also prove that $\dim_K(X)\geq \dim_{\Q}(\st(X\cap V^m))$ for every definable $X\sq K^m$.
\end{abstract}

\section{Introduction}
In \cite{L. D-T-convex-II},  L. van den Dries consider a pair $(R,V)$, where ${R}$ is an $o$-minimal extension of a real closed field, and $V$ is a convex hull of an elementary submodel $M$ of $R$. Let  $\mu\sq R$ be the set infinitesimals over $M$ and  $\hat V=V/{\mu}$ be the reside field with residue class map $x\mapsto \hat x$. If $M$ is Dedekind complete in $R$, then $\hat V=M$ and the residue class map coincide the standard part map $\st: R\longrightarrow M$. In this context, van den Dries showed the follows:
\begin{Thm1}\cite{L. D-T-convex-II}
Let $S\sq R^n$ be $R$-definable and $\hat S=\{\hat x| \  x\in S\cap V^n\}$. Then
\begin{itemize}
  \item [(i)] $S\cap M^n$ is definable in $M$ and $\dim_{M}(S\cap M^n)\leq \dim_{R}(S)$;
  \item [(ii)] $\st(S)$ is definable in $M$ and  $\dim_{M}(\st(S))\leq \dim_{ R}(S)$.
\end{itemize}
\end{Thm1}

\begin{Thm2}\cite{L. D-T-convex-II}
Let $f:R^m\rightarrow R$ be an $R$-definable function. Then  here is a finite partition ${\cal P}$ of $M^m$ into definable sets, where each set in the partition is either open in $M^m$ or lacks of interior.  On each open set $C\in \cal P$ we have:
\begin{itemize}
\item [(i)] either $f(x)\notin V$ for all $x\in C^h$;
  \item [(ii)] or there is a continuous function $g: C\longrightarrow M$, definable in $M$, such that $f(x)\in V$ and $\st(f(x))=g(\st(x))$, for all $x\in C^h$,
\end{itemize}
where $C^h$ is the hull of $C$ defined by
\[
C^h=\{\bar x\in R^m| \exists \bar y\in C\big( \bigwedge_{i=1}^m(x_i-y_i\in \mu)\big)\}.
\]

\end{Thm2}
\begin{Rmk}
For any topological space $Y$, and $X\sq Y$, by $\Int(X)$ we mean  the set of interiors in $X$. Namely, $x\in \Int(X)$ iff there is an open neighborhood $B\sq Y$ of $x$ contained in $X$.
\end{Rmk}
There are fairly good analogies between the field of reals $\R$ and the field of $p$-adic numbers $\Q$, in both model-theoretic and field-theoretic view. For example, both of them are complete and  locally compact topological fields, are distal and dp-minimal structures,  have quantifier eliminations with adding the new predicates for $n$-th power, and have cell decompositions.

In this paper, we treat the $p$-adic analogue of  above two Theorems, where $M$ is replaced by $\Q$, and $R$ is replaced by an arbitrary elementary extension $K$ of $\Q$. In our case, the convex hull $V$ is the set
\[\bigg\{x\in K|\ x=0\vee \exists n\in \mathbb Z  \bigg(v(x)>n\bigg)\bigg\}\]
and $\mu$, the infinitesimals of $K$ over $\Q$, is the set
\[\bigg\{x\in K|\ x=0\vee \forall n\in \mathbb Z \bigg(v(x)>n\bigg)\bigg\}.\]
By Lemma 2.1 in \cite{GPY}, for every $x\in V$, there is  a unique element $\st(x)$ in $\Q$ such that $a-\st(a)\in \mu$, we call it the standard part of $a$ and $\st:a\mapsto \st (a)$ the standard part map. It is easy to see that $\st: V\longrightarrow \Q$ is a surjective ring homomorphism and $\st^{-1}(0)=\mu$. So $\hat V=V/\mu$ is isomorphic to $\Q$ in our context. With the notations as above, we now highlight our main results.
\begin{Thm}
Let $S\sq K^n$ be $K$-definable. Then
\begin{itemize}
  \item [(i)] $S\cap \Q^n$ is definable in $\Q$ and $\dim_{\Q}(S\cap \Q^n)\leq \dim_{K}(S)$;
  \item [(ii)] $\st(S\cap V^n)$ is definable in $\Q$ and  $\dim_{\Q}(\st(S\cap V^n))\leq \dim_{K}(S)$.
\end{itemize}
\end{Thm}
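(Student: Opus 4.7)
My plan is to reduce $S$ to a single cell via $p$-adic cell decomposition, and then induct on $n$. By Macintyre's quantifier elimination every $K$-definable $S\sq K^n$ decomposes as a finite disjoint union $S=\bigsqcup_j C_j$ of $p$-adic cells, each specified by center functions $c_i(\bar x)\in K$, valuation bracket conditions on $v(x_i-c_i(\bar x))$, and $n$th-power residue predicates. Both operations $X\mapsto X\cap\Q^n$ and $X\mapsto\st(X\cap V^n)$ commute with finite disjoint unions, and $\dim_K$ and $\dim_{\Q}$ of a finite union are the maxima of the component dimensions; hence it suffices to verify (i) and (ii) for a single cell $C$.

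For the $\Q$-definability of $C\cap\Q^n$ and $\st(C\cap V^n)$, I would push the $K$-description of $C$ down to $\Q$ coordinate by coordinate via the following dichotomy. Given $\bar x\in V^{i-1}$: either $c_i(\bar x)\in V$, in which case $\st(c_i(\bar x))\in\Q$ serves as a $\Q$-definable proxy center; or $c_i(\bar x)\notin V$, so $v(c_i(\bar x))$ lies outside $\z$ and the valuation condition on $x_i-c_i(\bar x)$ becomes constant on every $\st$-fibre. Iterating across $i=1,\dots,n$, and using that $V$, $\mu$, and the $n$th-power predicates are captured by valuation conditions expressible over $\Q$ (together with the standard-part existence lemma from \cite{GPY}), one writes explicit $\Q$-definable formulas cutting out $C\cap\Q^n$ and $\st(C\cap V^n)$.

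For the dimension inequalities I plan to prove (ii) first and then derive (i) from the inclusion $S\cap\Q^n\sq\st(S\cap V^n)$, which holds because $\st$ restricts to the identity on $\Q^n$. To bound $\dim_{\Q}(\st(C\cap V^n))\leq\dim_K(C)$ I induct on $n$: a cell $C$ sits over a base cell $D\sq K^{n-1}$ with last coordinate either the graph of a $K$-definable function $f\colon D\to K$ or a $p$-adically ``open'' subset of $K$. In the open case $\dim_K(C)=\dim_K(D)+1$, and the extra $\st$-coordinate contributes at most $1$ to $\Q$-dimension, so the bound follows from the inductive hypothesis applied to $D$. In the graph case $\dim_K(C)=\dim_K(D)$, and it reduces to showing that the projection of $\st(C\cap V^n)$ onto its first $n-1$ coordinates is generically finite-to-one, so that no ``extra'' $\Q$-dimension is created by the last coordinate.

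The main obstacle is precisely this graph-cell step: \emph{a priori}, two $K$-points of $D\cap V^{n-1}$ sharing a standard part can yield $f$-values in $V$ with distinct standard parts, so $\st\circ f$ need not descend to a function on $\st(D\cap V^{n-1})$. Controlling this behavior is exactly the content of Theorem~2 of the abstract (constancy of $\st\circ f$ on $\st$-fibres off a set lacking $\Q$-interior), which must therefore be established first (or in parallel) and invoked to close the induction, bridging the combinatorial cell structure over $K$ with the $\Q$-definable dimension calculation.
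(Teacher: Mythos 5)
Your reduction of (i) to (ii) via the inclusion $S\cap\Q^n\sq\st(S\cap V^n)$ is fine, but both halves of the plan have genuine gaps. First, the definability step is not an argument as written. The dichotomy ``$c_i(\bar x)\in V$ versus $c_i(\bar x)\notin V$'' is not in general a definable condition in $K$ ($V$ and $\mu$ are only countable unions, respectively intersections, of definable sets), it varies with $\bar x$, and even on the locus where $c_i(\bar x)\in V$ nothing in your sketch shows that $\bar x\mapsto\st(c_i(\bar x))$, or the pushed-down valuation and power-residue conditions (whose bounds lie in $\Gamma_K$ and may be nonstandard), are cut out by $L_r$-formulas over $\Q$; the lemma you cite from \cite{GPY} gives only existence and uniqueness of standard parts, not definability. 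The real content here is Delon's theorem that all types over $\Q$ are definable (Fact \ref{Every-type-over-QP-is-definable}) and the Onshuus--Pillay fact that $\st(S\cap V^n)$ is $\Q$-definable (Fact \ref{St-part-image-definable}); once those are quoted, as the paper does, the cell decomposition contributes nothing to the definability claims.

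Second, the graph-cell step of your dimension induction does not close. Its stated target --- that the projection of $\st(C\cap V^n)$ to the first $n-1$ coordinates is generically finite-to-one --- is false: take $t\in K$ with $v(t)>\z$, let $D=\{x\in K\mid v(x)\geq v(t)\}$ and $f(x)=x/t$; then $C$ is a $1$-dimensional graph, $\st(D\cap V)=\{0\}$, and $\st(C\cap V^2)=\{0\}\times\Z$, so the fibre over the unique base point is infinite. What your accounting actually requires is that the bad base locus have $\Q$-dimension at most $\dim_K(D)-1$, and the tool you propose to invoke, Theorem \ref{Main-thm-2}, cannot deliver this: it concerns total functions $f:K^m\rightarrow K$ and only asserts that the exceptional set lacks interior in $\Q^m$, which is vacuous in the inductive setting where $\dim_K(D)<n-1$ and $\st(D\cap V^{n-1})$ itself has empty interior. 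You would need a relative, dimension-theoretic strengthening of that theorem, which your sketch neither states nor proves and which is essentially as hard as the result you are after. The paper takes a different route that avoids this: partition $S$ so that some coordinate projection onto $K^k$, $k=\dim_K(S)$, is injective (Corollary \ref{1-1-projection}); then (i) follows because this projection stays injective on $S\cap\Q^n$ (Lemma \ref{finite-to-one-dim(Y)=dim(X)}), and (ii) follows by taking a definable Skolem section of the projection, trapping each coordinate function in a polynomial variety with coefficients in $V$ (Lemma \ref{graph-in-variety}) and applying $\st$ to those polynomials, so that the last $n-k$ coordinates of any point of $\st(S\cap V^n)$ are algebraic over the first $k$ (Lemma \ref{dim(st(Z))leq dim(Z)}). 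I recommend reworking your dimension bounds along those lines rather than cell-by-cell.
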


\begin{Thm}\label{Main-thm-2}
Let $f:K^m\rightarrow K$ be an $K$-definable function. Then here is a finite partition ${\cal P}$ of $\Q$ into definable sets, where each set in the partition is either open in $\Q^m$ or lacks of interior. On each open set $C\in \cal P$ we have:
\begin{itemize}
\item [(i)] either $f(x)\notin V$ for all $x\in C^h$;
  \item [(ii)] or there is a continuous function $g: C\longrightarrow \Q$, definable in $\Q$, such that $f(x)\in V$ and $\st(f(x))=g(\st(x))$, for all $x\in C^h$.
\end{itemize}
\end{Thm}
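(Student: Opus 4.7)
The strategy combines Theorem 1 with a $p$-adic cell decomposition applied to $f$ over $K$. Introduce the sets
\begin{align*}
U_1 &= \bigl\{a\in\Q^m : f(\st^{-1}(a))\sq V \text{ and } \st\circ f \text{ is constant on } \st^{-1}(a)\bigr\},\\
U_2 &= \bigl\{a\in\Q^m : f(\st^{-1}(a))\cap V=\emptyset\bigr\},
\end{align*}
and $U_3=\Q^m\setminus(U_1\cup U_2)$. Each is $\Q$-definable: applying Theorem 1(ii) to $\mathrm{graph}(f)\cap V^{m+1}\sq K^{m+1}$ produces the $\Q$-definable set $T=\st(\mathrm{graph}(f)\cap V^{m+1})\sq\Q^{m+1}$, and $\Q^m\setminus U_2$ is the first projection of $T$. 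The condition ``$\st^{-1}(a)\sq f^{-1}(V)$'' in the definition of $U_1$ reduces via saturation to a union of the $\Q$-definable conditions ``there exist standard $n,N$ with $B^K_N(a)\sq\{b:v(f(b))>-n\}$'', whose $\Q$-traces are $\Q$-definable by Theorem 1(i); and the ``$\st\circ f$ constant on $\st^{-1}(a)$'' clause is read off from the fibers of $T$ over $\Q^m$.

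The central claim is that $U_3$ has empty $\Q$-interior. Apply a $p$-adic cell decomposition of Denef--Cluckers type to $f$ in $K$, partitioning $K^m=\bigsqcup_j D_j$ with $f|_{D_j}$ continuous and $v(f(x))$ in canonical affine-in-valuations form on each cell. Refine so that each $D_j$ either has all its cell-parameters (centers, radii, coset representatives) standard (i.e.\ in $\Q$ or $\z$), in which case $A_j:=D_j\cap\Q^m$ mirrors the cell structure in $\Q^m$ and, if $D_j$ is open in $K^m$, one has $\st^{-1}(A_j)\sq D_j$ because every defining valuation inequality is witnessed by a standard-radius $K$-ball; or has at least one non-standard parameter, in which case Theorem 1(i) together with a direct computation shows that $A_j$ has empty $\Q$-interior. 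On a standard-parameter open cell $D_j$, the Denef form forces $v(f(b))$ on $\st^{-1}(a)$ either to be uniformly bounded below by a standard integer (whence $f(\st^{-1}(a))\sq V$, and continuity of $f$ yields constant $\st\circ f$) or to drop below every standard integer (whence $f(\st^{-1}(a))\cap V=\emptyset$); in either case $a\notin U_3$ throughout the $\Q$-interior of $A_j$.

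Given this, apply $p$-adic cell decomposition in $\Q$ to refine $\{U_1,U_2,U_3\}$ into a finite partition $\mathcal{P}$ of $\Q^m$ with each member open in $\Q^m$ or of empty interior; every open cell of $\mathcal{P}$ then lies in $U_1$ or in $U_2$. For an open cell $C\sq U_1$, define $g:C\to\Q$ by $g(a)=\st(f(b))$ for any $b\in\st^{-1}(a)$: well-definedness is the $U_1$-condition, $\Q$-definability of the graph comes from the fiber of $T$ over $C$, and continuity off a lower-dimensional subset is automatic for any definable $\Q$-function, so one further cell refinement of $\mathcal{P}$ leaves $g$ continuous on each resulting open cell while sweeping the discontinuity locus into a no-interior piece. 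The main obstacle will be the middle step: translating the $K$-topology of cells into the $\Q$-topology of their traces --- in particular, guaranteeing $\st^{-1}(a)\sq D_j$ at $\Q$-interior points $a$ of $A_j$ --- requires careful standard/non-standard bookkeeping of cell-parameters, and ruling out mixed $V$/non-$V$ behavior on $\st^{-1}(a)$ demands combining the Denef form of $v(f)$ with a saturation/overspill argument that constrains the range of $v(f)$ over an infinitesimal fiber.
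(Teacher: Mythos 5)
Your high-level architecture is the same as the paper's: split $\Q^m$ into a locus where the infinitesimal fibers behave uniformly well (your $U_1$), a locus where $f$ misses $V$ on the whole fiber ($U_2$), and a bad set ($U_3$, which is the paper's $D_X\cup U$); show the bad set has no interior; then refine inside $\Q$ to obtain continuity. But the central step --- emptiness of the interior of $U_3$ --- is not proved, and the mechanism you propose for it does not work. First, a cell carrying a non-standard parameter can have a trace with non-empty interior: $\{x\in K:\ v(x-a)<\gamma\}$ with $a\in\Q$ and $\gamma\in\Gamma_K$ positive non-standard traces onto $\Q\setminus\{a\}$. Second, since $f$ is $K$-definable, its defining parameters are in general non-standard, so no refinement produces cells ``with all parameters standard''; what is needed is a standard part of the relevant algebraic data, not of cell parameters. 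Third, the inference ``$v(f)$ bounded below by a standard integer on $\st^{-1}(a)$, plus continuity of $f$, gives constancy of $\st\circ f$ on the fiber'' is false: with $\gamma$ non-standard positive and $v(c)=-\gamma$, the function equal to $c(x-a)$ on the clopen set $\{v(x-a)\geq\gamma\}$ and to $0$ elsewhere is continuous there, maps $\st^{-1}(a)$ into the valuation ring, yet $\st\circ f$ takes every value of $\Z$ on that fiber (evaluate at $x=a+c^{-1}t$, $t\in\Z$). Whether the set of such bad points $a$ has interior is a genuinely global question, and it is exactly the content of Proposition \ref{Prop-no-interior} and Corollary \ref{Cor-no-interior}: the graph of $f$ is trapped in a hypersurface $g(\bar x,y)=0$ with coefficients in $V$ and not all infinitesimal; the root-tracking lemma of Scowcroft--van den Dries, generalized to $K$ (Lemma \ref{partiton-of-definable-functions}, which in turn needs Proposition \ref{Prop-K is closed in E}), is applied both to $g$ over $K$ and to $\st(g)$ over $\Q$; and the bad set is then covered by finitely many sets $\st(D_i)\cap\st(D_j)$, which have no interior by Fact \ref{O-P-st-image-has-no-interior}. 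Nothing in your sketch substitutes for this argument, and you acknowledge as much in your final sentence.

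A secondary gap is the definability of $U_1$: you reduce ``$\st^{-1}(a)\sq f^{-1}(V)$'' to a standard-parameter condition ``via saturation'', but $K$ is an arbitrary elementary extension of $\Q$ and need not realize the relevant countable partial type, so the equivalence requires a separate argument (it can be rescued by definable overspill in $\Gamma_K$, using that every definable subset of a model of Presburger arithmetic that is bounded above has a maximum, so the set of standard integers is not definable). The paper avoids this issue entirely: $\Q$-definability of $X$, $X_\infty$, $\st\circ f$ on $\Q$-points, and of $D_X$ and $U$ is obtained from Delon's definability of types over $\Q$ (Fact \ref{Every-type-over-QP-is-definable}) and Onshuus--Pillay's definability of standard-part images (Fact \ref{St-part-image-definable}), as in Lemmas \ref{Lemma-definable-partition-I} and \ref{Dx U are Definable}; your identification of $\Q^m\setminus U_2$ with the projection of $\st(\mathrm{graph}(f)\cap V^{m+1})$ is fine and is close in spirit to that lemma. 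The final refinement step (continuity of $g$ on open pieces after a further partition, discontinuities swept into a no-interior piece) is correct and matches the paper's use of Scowcroft--van den Dries over $\Q$.
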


In the rest of this introduction we give more  notations and model-theoretic approach.

\subsection{Notations}

Let $p$ denote a fixed prime number, $\Q$ the field the $p$-adic field, and $v:\Q\backslash \{0\}\rightarrow \mathbb Z$ is the valuation map. Let $K$ be a fixed elementary extension of $\Q$. Then valuation  $v$  extends to a valuation map from $K\backslash \{0\}$ to $\Gamma_K$, we also denote it by $v$, where $(\Gamma_K, +,<,0)$ is the corresponding elementary extension of $(\mathbb Z, +,<,0)$.

\begin{Fact}
Let $v: K\backslash\{0\}\longrightarrow \Gamma_K$ be as above. Then we have
\begin{itemize}
  \item $v(xy)=v(x)+v(y)$ for all $x,y\in K$;
  \item $v(x+y)\geq min\{v(x),v(y)\}$, and $v(x+y)= min\{v(x),v(y)\}$ if $v(x)\neq v(y)$;
  \item For $x\in \Q$, $|x|=p^{-v(x)}$ if $x\neq 0$ and $|x|=0$ if $x=0$  defines a non-archimedean metric on $\Q$.
  \item $K$ is a non--archimedean topological field.
\end{itemize}
\end{Fact}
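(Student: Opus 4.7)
The plan is to deduce all four bullets from two ingredients: (a) the valuation ring $\V_\Q = \Z$ is $\emptyset$-definable in $\Q$ as a pure ring (by Macintyre's theorem, e.g.\ via $\exists y(y^2 = 1 + p x^2)$ for odd $p$, with a standard modification for $p = 2$), and (b) $\Q\preceq K$. Under (a), the predicate $v(x) \leq v(y)$ on $\Q^\times$ is $L_r$-definable by $y/x \in \V_\Q$, and the value group $\mathbb{Z}$ is interpretable as the ordered group $\Q^\times/\V_\Q^\times$. Applying the same formulas inside $K$ defines a valuation ring $\V_K$ and an interpretable ordered abelian group $K^\times/\V_K^\times$; by (b) this is an elementary extension $(\Gamma_K,+,<,0)\succeq (\mathbb{Z},+,<,0)$, and this is the extended valuation $v : K\setminus\{0\} \to \Gamma_K$ whose basic properties are asserted in the Fact.

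For the first two bullets, the identities $v(xy) = v(x)+v(y)$ and $v(x+y) \geq \min\{v(x),v(y)\}$ (with equality when $v(x)\neq v(y)$) are first-order sentences in the interpreted structure, they hold in $\Q$ by standard $p$-adic arithmetic, and hence hold in $K$ by elementary equivalence. For the third bullet, the map $|\cdot|$ on $\Q$ inherits non-negativity and $|x|=0\iff x=0$ from the definition, multiplicativity $|xy|=|x||y|$ from the first bullet, and the ultrametric triangle inequality $|x+y|\leq \max\{|x|,|y|\}$ from the second bullet via the strictly decreasing map $t\mapsto p^{-t}$. Then $d(x,y) = |x-y|$ is a non-archimedean metric.

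For the fourth bullet, I would equip $K$ with the topology whose basic open neighborhoods of $a\in K$ are $U_\gamma(a) = \{x\in K : v(x-a) > \gamma\}$ for $\gamma \in \Gamma_K$. Continuity of addition follows from $(x+y)-(a+b) = (x-a)+(y-b)$ together with the ultrametric inequality; continuity of $x\mapsto -x$ from $v(-x)=v(x)$; continuity of multiplication from the identity $xy-ab = (x-a)(y-b)+a(y-b)+b(x-a)$ together with the valuation axioms; and continuity of inversion on $K^\times$ from $x^{-1}-a^{-1} = (a-x)/(xa)$, using that for $v(x-a) > v(a)$ one has $v(x) = v(a)$ by the strict-equality case of bullet two. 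That the topology is non-archimedean is automatic since the ball system satisfies the ultrametric nesting property.

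The only mild obstacle is the identification of $\Gamma_K$ and the extended $v$ so that elementary transfer can be legitimately applied; once (a) is in place this is routine, and the remaining verifications are algebraic manipulations with the valuation axioms. No new idea beyond $\Q\preceq K$ and the Macintyre-definability of $\V_\Q$ enters the argument.
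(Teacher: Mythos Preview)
Your proposal is correct, but note that the paper does not supply a proof of this statement at all: it is labeled a \emph{Fact} and presented as standard background on $p$-adic valuations, with no argument given. So there is no ``paper's own proof'' to compare against; your write-up simply fills in details the author assumed.

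That said, your approach is sound and slightly more elaborate than necessary for the first two bullets. You route them through elementary transfer from $\Q$ via the $L_r$-definability of the valuation ring, which is fine and matches the paper's setup (it explicitly takes $\Gamma_K$ to be the corresponding elementary extension of $(\mathbb Z,+,<,0)$). One could also just observe that any valuation satisfies these axioms by definition, independent of elementarity. Your verifications for bullets three and four are the standard ones and are correct; in particular your use of the strict-equality clause to get $v(x)=v(a)$ when $v(x-a)>v(a)$, needed for continuity of inversion, is exactly the right point. Nothing is missing.
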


We will assume a basic knowledge of model theory. Good references are \cite{P-book} and \cite{M-book}.  We will be referring a lot to the comprehensive survey \cite{Luc Belair} for the basic model theory of the p-adics. A key
point is Macintyre��s theorem \cite{Macintyre} that $\Th(\Q,+,\times, 0,1)$ has quantifier elimination in the language $L=L_r\cup \{P_n|\ n\in \N^+\}$,  where $L_r$ is the language of rings, and  the predicate $P_n$ is interpreted as the $n$-th powers
\[
\{y\in \Q|\ y\neq 0\wedge \exists x(y=x^n)\}
\]
for each $n\in \N^+$. Note that $P_n$ is definable in $L_r$.
Moreover, the valuation is definable in $L_r$ as follows.
\begin{Fact}\cite{Denef-cell-dec}\label{valuation-is-definable}
Let $f,g\in K[x_1,...,x_m]$. Then $\{\bar a\in K^m|\ v(f(\bar a))\leq v(g(\bar a))\}$ is definable.
\end{Fact}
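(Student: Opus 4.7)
The plan is to reduce the claim to the $L_r$-definability of the full valuation ring $\V=\{x\in K: v(x)\geq 0\}$ of $K$ (this is the valuation ring with respect to $v:K^\times\to\Gamma_K$, not to be confused with the convex hull $V$ introduced later). The key observation is that
\[
v(f(\bar a))\leq v(g(\bar a))\iff \exists y\bigl(y\in\V\wedge g(\bar a)=y\cdot f(\bar a)\bigr).
\]
A short case check confirms this: when $f(\bar a)\neq 0$, the right-hand side says $g(\bar a)/f(\bar a)\in\V$, i.e.\ $v(g(\bar a))\geq v(f(\bar a))$; when $f(\bar a)=0$, it forces $g(\bar a)=0$, which matches the convention $v(0)=\infty$. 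Hence once $\V$ is defined by an $L_r$-formula $\psi(y)$, the set in the statement is defined by $\exists y\bigl(\psi(y)\wedge g(\bar x)=yf(\bar x)\bigr)$, which is $L_r$ since $f,g$ are polynomials.

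To $L_r$-define $\V$ I would invoke the classical Julia Robinson / Macintyre formula, which rests on Hensel's lemma. For odd $p$, take
\[
\psi(x)\;:=\;\exists y\,(y^2=1+p x^2).
\]
If $v(x)\geq 0$, then $v(p x^2)\geq 1$, so $1+p x^2$ is congruent to $1$ modulo the maximal ideal, and since $p\neq 2$, Hensel's hypothesis for $T^2-(1+p x^2)$ at $T=1$ is satisfied, giving a square root. If $v(x)<0$, then $v(1+p x^2)=v(p x^2)=1+2v(x)$ is a (negative) odd integer, so $1+p x^2$ cannot be a square. For $p=2$ the same strategy works with $\exists y(y^3=1+2 x^3)$: when $v(x)\geq 0$ Hensel applies to $T^3-(1+2x^3)$ at $T=1$ because the derivative $3T^2$ is a $2$-adic unit, while for $v(x)<0$ one has $v(1+2x^3)=1+3v(x)\equiv 1\pmod 3$, ruling out cubes.

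Since these are $L_r$-formulas and $K\succeq\Q$, they define $\V$ in $K$ as well as in $\Q$, completing the proof by the reduction above. The main delicate point is the $p=2$ case, where the ``square modulo $p$'' criterion fails and must be replaced by one using an exponent coprime to $2$ (such as $3$) so that Hensel's hypothesis remains in force; once that substitution is made, the argument is entirely parallel.
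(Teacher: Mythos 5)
Your proposal is correct, but note that the paper itself gives no argument here: it simply cites Denef's theorem that sets of the form $\{\bar a\mid v(f(\bar a))\leq v(g(\bar a))\}$ are semialgebraic. What you supply is the standard self-contained proof behind that citation (the Robinson--Macintyre trick): reduce to definability of the valuation ring via $v(f(\bar a))\leq v(g(\bar a))\iff\exists y\,(y\in\V\wedge g(\bar a)=yf(\bar a))$ (your case check, including the $f(\bar a)=0$ case under the convention $v(0)=\infty$, is right), and then define $\V$ by $\exists y\,(y^2=1+px^2)$ for odd $p$ and $\exists y\,(y^3=1+2x^3)$ for $p=2$. Two small points deserve tightening. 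First, the sentence ``since $K\succeq\Q$ the formulas define $\V$ in $K$ as well'' is not quite a matter of elementarity: because $f,g$ have coefficients in $K$ and the identification of the set defined by your parameter-free formula with $\{x\in K\mid v(x)\geq 0\}$ is not itself an $L_r$-sentence, you should instead observe that your Hensel-plus-parity computation runs verbatim in $K$ --- $K$ is henselian (a first-order property, as the paper notes), $v$ on $K$ extends $v$ on $\Q$, and $\Gamma_K\equiv(\mathbb Z,+,<)$ is a $\mathbb Z$-group, so $1+2v(x)\notin 2\Gamma_K$ and $1+3v(x)\notin 3\Gamma_K$ when needed. Second, for $p=2$ with $v(x)=0$ one has $v(1+2x^3-1)=1$, so the particular form of Hensel's Lemma quoted in the paper (requiring $v(f(a))>2n+1$ with $v(f'(a))\leq n$) does not literally apply at $T=1$; you need the usual Hensel--Rychlik criterion $v(f(a))>2v(f'(a))$, or alternatively the remark that cubing is surjective on the units of the valuation ring since $3$ is a unit. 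With those two clarifications your argument is complete and gives exactly the fact the paper imports from Denef, in a more elementary and self-contained way.
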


\begin{Rmk}
It is easy to see from Fact \ref{valuation-is-definable} that $\{a\in K| v(a)=\gamma\}$ and $\{a\in K| v(a)<\gamma\}$ are definable for any fixed $\gamma\in \Gamma_K$.
\end{Rmk}

For $A$ a subset of $K$, by an $L_A$-formula we mean a formula with parameters from $A$. By $\bar x,\bar  y, \bar z$ we mean arbitrary $n$-variables and $\bar a,\bar  b, \bar c\in K^n$ denote $n$-tuples in $K^n$ with $n\in \N^+$. By $|\bar x|$, we mean the length of the tuple $\bar x$.
We say that $X\sq K^m$ is $A$-definable  if there is a $L_A$-formula $\phi(x_1,...,x_m)$ such that
\[
X=\{(a_1,...,a_m)\in K^m| \ K\models \phi(a_1,...,a_m)\}.
\]
We also denote $X$ by $\phi(K^m)$ and say that $X$ is defined by $\phi(\bar x)$. We say that $X$ is definable in $K$ if $X\sq K^m$ is $K$-definable.
If $X\in \Q^m$ is defined by some $L_{\Q}$-formula $\psi(\bar x)$. Then by $X(K)$ we mean $\psi(K^m)$, namely, the realizations of $\psi$ in $K$, which is a definable subset of $K^m$.

 For any subset $A$ of $K$, by $\acl(A)$ we mean the algebraic closure of $A$. Namely, $b\in \acl(A)$ if and only if there is a formula $\phi(x)$ with parameters from $A$ such that $b\in\phi(K)$ and $\phi(K)$ is finite.  Let $\alpha=(\alpha_1,...,\alpha_n)\in K^m$, we denote $\acl(A\cup\{\alpha_1,...,\alpha_n\})$ by $\acl(A,\alpha)$.

By a saturated extension $\K$ of $K$, we mean that $|\K|$ is a sufficiently large cardinality, and every type over $A\sq \K$ is realized in $\K$ whenever $|A|< |\K|$.

\subsection{Preliminaries}

The $p$-adic field $\Q$ is a complete, locally compact topological field, with basis given by the sets
\[
B_{(a,n)}=\{x\in \Q|\ x=a \wedge v(x-a)\geq n\}
 \]
for $a\in \Q$ and $n\in \mathbb Z$.  The elementary extension $K\succ \Q$ is also a topological field but not need to be complete or locally compact. Let $X\sq K^m$, we say that $\bar a\in X$  is an interior if there is $\gamma\in \Gamma_k$ such that
\[B_{(\bar b, \gamma)}=\{(b_1,...,b_m)\in \K|\ \bigwedge_{i=1}^{n} v(a_i-b_i)>\gamma \}\sq X.\]

Let $V=\{x\in K|\ x=0\vee \exists n\in \mathbb Z(v(x)>n)\}$. We call $V$ the convex hull of $\Q$. It is easy to see that for any  $a,b\in K$, if $b\in V$ and $v(a)>v(b)$, then $a\in V$. As we said before, for every $a\in V$, there is a unique $a_0\in \Q$ such that $v(a-a_0)>n$ for all $n\in \mathbb Z$. This gives a  map $a\mapsto a_0$ from $V$ onto $\Q$. We call this map the standard part map, denoted by $\st: V\longrightarrow \Q$. For any $\bar a=(a_1,...,a_m)\in V^{m}$, by $\st(\bar a)$ we mean $(\st(a_1),...,\st({a_m}))$. Let $f(\bar x)\in K[\bar x]$ be a polynomial with every coefficient contained in $V$. Then by $\st(f)$, we mean the polynomial over $\Q$ obtained by replace each coefficient of $f$ by its standard part. Let
\[
\mu=\{a\in K|\ \st(a)=0\},
\]
which is the collection of all infinitesimals of $K$ over $\Q$. It is easy to see that for any $a\in K\backslash \{0\}$, $a\notin V$ iff $a^{-1}\in \mu$.

Any definable subset $X\sq K^n$ has a topological dimension which is defined as follows:
\begin{Def}
Let $X\sq K^n$. By $\dim_K(X)$, we mean the maximal $k\leq n$ such that the image of the projection
\[
\pi:X\longrightarrow K^k;\ \ (x_1,...,x_n)\mapsto (x_{r_1},...,x_{r_k})
\]
has interiors, for  suitable  $1\leq r_1<...<r_k\leq n$. We call $\dim_K(X)$ the topological dimension of $X$.
\end{Def}

Recall that $\Q$ is a  geometry structure (see Definition 2.1 and Proposition 2.11 of \cite{Hrushovski-Pillay} ), so any $K\models \Th(\Q)$ is a geometry structure. The fields has geometric structure are
certain fields in which model-theoretic algebraic closure equals field-theoretic algebraic closure.

Every geometry structure is a pregeometry structure, which means that for any $\bar a=(a_1,...,a_n)\in K^n$ and $A\sq K$,  $\dim(\bar a/A)$ makes sense, which by definition is the maximal $k$ such that
$a_{r_{1}}\notin \acl(A)$ and $a_{r_{i+1}}\notin \acl(A, a_{r_{1}},...,a_{r_{i}})$ for
some subtuple $(a_{r_1},...,a_{r_k})$ of $\bar a$.  We call $\dim(\bar a/A)$ the algebraic dimension of $\bar a$ over $A$.

\begin{Fact}\label{alg-dimension and topological dimension}\cite{Hrushovski-Pillay}
Let $A$ be a subset of $K$ and  $X$ an $A$-definable subset of $K^m$.
\begin{itemize}
  \item [(i)] If $\bar a\in K^m$ and $\bar b\in K^n$. Then we have
\[
\dim(\bar a,\bar b/A)=\dim(\bar a/A,\bar b)+\dim(\bar b/A)=\dim(\bar b,\bar a/A).
\]
   \item [(ii)] Let $\K\succ K$  be a saturated model.   Then $\dim_K(X)=\max\{\dim(\bar a/A)|\ \bar a\in X(\K)\}$.
   \item [(iii)] Let $\phi(x_1,...,x_m,y_1,...,y_n)$ be any $L_A$-formula and $r\in \N$. Then the set
  \[
  \{\bar b\in K^n|\ \dim_K(\phi(K^m,\bar b))\leq r\}
  \]
  is $A$-definable.
\item [(iv)] If $X\sq K$ is $K$-definable. Then $X$ is infinite iff $\dim_K(X)\geq1$.
\item [(v)] Let $A_0$ be a countable subset of $\Q$, and let $Y$
be an $A_0$-definable subset of $\Q^n$. Then there is $\bar a_0\in Y$ such that $\dim(\bar a_0/A_0) = \dim_{\Q}(Y)$.
\end{itemize}
\end{Fact}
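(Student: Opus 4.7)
The plan is to establish parts (i)--(v) by combining the pregeometry framework for geometric structures with specific features of $p$-adically closed fields: Macintyre's quantifier elimination, Denef's cell decomposition, and definability of the valuation (Fact~\ref{valuation-is-definable}). Part (i) is formal: since $K$ is geometric, $\acl$ satisfies exchange, and the additivity identity for tuple dimensions is a standard pregeometry computation. Part (iv) reduces to (ii): if $X\sq K$ is infinite and $A$-definable with $A$ finite, then $\acl(A)$ coincides with the field-theoretic algebraic closure and is therefore countable, so some point of $X(\K)$ avoids it, giving algebraic dimension $\geq 1$; conversely $\dim_K(X)\geq 1$ forces the projection onto $K$ to have interior, hence $X$ is infinite.

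Part (ii) is the heart of the matter. For the inequality $\dim_K(X)\geq \dim(\bar a/A)$ whenever $\bar a\in X(\K)$ realizes dimension $k$, I would permute coordinates so that $(a_1,\dots,a_k)$ is $\acl(A)$-independent and the remaining coordinates are algebraic over $A\cup\{a_1,\dots,a_k\}$, consider the projection $\pi(X)\sq K^k$ onto the first $k$ coordinates, and argue that $\pi(X)$ has interior: if not, Denef's cell decomposition would express $\pi(X)$ as a finite union of cells of topological dimension $<k$, on each of which some coordinate is a definable function of the others, which applied to $(a_1,\dots,a_k)$ contradicts algebraic independence over $A$. Conversely, if $\pi(X)$ contains an open ball $B$, saturation of $\K$ supplies a tuple $\bar c\in B(\K)$ whose coordinates are algebraically independent over $A$ (the algebraic closure of any countable parameter set is countable, while $B(\K)$ is uncountable), and any lift to $X(\K)$ yields algebraic dimension $\geq k$.

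Parts (iii) and (v) follow from (ii). For (iii), the condition ``$\pi(\phi(K^m,\bar b))$ has interior'' is expressible by $\exists \bar c\,\exists \gamma\,\forall \bar y\bigl(\bigwedge_i v(y_i-c_i)>\gamma\to \bar y\in\pi(\phi(K^m,\bar b))\bigr)$, which is first-order with parameters $\bar b$ by Fact~\ref{valuation-is-definable} together with Macintyre's quantifier elimination; a conjunction over the finitely many $k$-element coordinate subsets delivers the required definable condition. For (v), since $\Q$ is uncountable and $A_0$ is countable, $\acl(A_0)\cap\Q$ is countable; by (ii) some projection of $Y$ onto $d$ coordinates ($d=\dim_{\Q}(Y)$) has interior in $\Q^d$, so one picks a tuple in this interior algebraically independent over $A_0$ and lifts it inside $Y$, with part (i) ensuring $\dim(\bar a_0/A_0)=d$.

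The main obstacle is part (ii): bridging algebraic independence (pregeometry sense) with topological interior (definition of $\dim_K$) requires Denef's cell decomposition applied with care, and the descent from generic points in $\K$ back down to witnesses in $K$ leans on the interplay between saturation and definable open sets. Once (ii) is secured, (i), (iii)--(v) unwind by routine definability and counting arguments standard in the theory of geometric structures.
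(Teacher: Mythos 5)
The paper gives no proof of this Fact at all: it is quoted from Hrushovski--Pillay as background, so there is no internal argument to compare yours against. Your reconstruction is the standard one and its architecture is sound: (i) is pure pregeometry, (iv), (iii) and (v) do reduce to (ii) via definability of the valuation and cardinality counting, and (ii) is correctly identified as the real content (topological dimension equals the maximal algebraic dimension of a realization in a saturated extension), with the converse direction (interior $\Rightarrow$ a generic point, by successively avoiding algebraic closures of small sets inside an uncountable ball) done correctly.

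Two places need shoring up before the sketch is a proof. First, in the forward direction of (ii) the sentence ``Denef's cell decomposition would express $\pi(X)$ as a finite union of cells of topological dimension $<k$, on each of which some coordinate is a definable function of the others'' hides the crux: you must arrange that the cells and their centre functions are definable over (the algebraic closure of) $A$, so that the dependence really contradicts $\acl$-independence of $\bar a$ over $A$, and you must justify that a cell with no point-type coordinate has interior, which needs continuity of the cell data or a further decomposition. A cleaner route, using only tools the paper already quotes, is Macintyre's quantifier elimination: write the $A$-definable set $\pi(X)$ as a finite union of pieces given by polynomial equations, inequations and $P_n$-conditions; the piece containing $\pi(\bar a)$ (where $\dim(\bar a/A)=k$) can contain no nontrivial equation, since that would put some coordinate in $\acl(A\cup\{\text{the others}\})$, so that piece is a nonempty finite intersection of open conditions (the paper notes $P_n$-sets are open) and $\pi(X)$ has interior. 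Second, two small repairs: in (iii) you cannot quantify over $\gamma\in\Gamma_K$ in $L_r$; quantify over a nonzero field element $t$ and write $v(y_i-c_i)>v(t)$, which Fact~\ref{valuation-is-definable} makes first-order. And in the converse of (ii) the counting should be $|\acl(A\cup\{\text{finitely many points}\})|\le |A|+\aleph_0<|\K|$ rather than ``countable,'' since $A\sq K$ need not be countable (for (v), where $A_0$ is countable, your countability argument is fine). With these adjustments your outline is a correct proof of the quoted Fact.
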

It is easy to see from Fact \ref{alg-dimension and topological dimension} that for any $L_K$-formula $\phi(x_1,...,x_n)$ and $K'\succ K$, we have
\[\dim_K(\phi(K^n))=\dim_{K'}(\phi({K'}^n)).\]
We will write $\dim_K(X)$ by $\dim(X)$ if there is no ambiguity. If the function $f:X\longrightarrow K$ is definable in $K$, and $Y\sq X\times K$ is the graph of $f$. Then we conclude directly  that  $\dim(X)=\dim(Y)$ by Fact \ref{alg-dimension and topological dimension} (ii).

For later use, we recall some well-known facts and terminology.
\begin{Hensel's Lemma}
Let $\Z=\{x\in \Q| x=0\vee v(x)\geq 0\}$ be the valuation ring of $\Q$. Let $f(x)$ be a polynomial over $\Z$ in one variable $x$, and let $a\in \Z$ such that $v(f(a))>2n+1$ and $v(f'(a))\leq n$, where $f'$ denotes the derivative of $f$. Then there exists a unique $\hat a\in \Z$ such that
\[
f(\hat a)=0\ \text{and}\ v(\hat a-a)\geq n+1.
\]
\end{Hensel's Lemma}

We say a field $E$ is a Henselian field if Hensel's Lemma holds in $E$. Note that to be a henselian field is a first-order property of a field in the language of rings. Namely, there is a $L_r$-sentence $\sigma$ such that $E\models \sigma$ iff $E$ is a henselian field. So any $K\succ \Q$ is henselian.

\section{Main results}
\subsection{Some Properties of Henselian Fields}
Since $\Q$ is complete and local compact, it is easy to see that:
\begin{Fact}\label{Qp is closed in E}
Suppose that $E$ is a finite (or algebraic) field extension of $\Q$. Then for any $\alpha\in E\backslash \Q$, there is $n\in \mathbb Z$ such that $v(\alpha-a)<n$ ($|\alpha-a|>p^{-n}$) for all $a\in \Q$. Namely, $\Q$ is closed in $E$.
\end{Fact}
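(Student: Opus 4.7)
The plan is to derive this from completeness of $\Q$ together with the fact that the valuation on $\Q$ extends (uniquely) to the algebraic extension $E$, still taking values in a discrete subgroup of $\mathbb{Q}$. So first I would fix such an extension $\tilde v : E \setminus \{0\} \to \frac{1}{e}\mathbb{Z}$, where $e$ is the ramification index if $E/\Q$ is finite (for general algebraic $E$ one works inside the finite subextension $\Q(\alpha)$). The key point I need from this extension is the ultrametric inequality $\tilde v(x+y) \geq \min\{\tilde v(x), \tilde v(y)\}$, which I will use below.

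Next I would argue by contradiction. Suppose no such $n$ exists. Then for every $n \in \mathbb{Z}$ one can choose $a_n \in \Q$ with $\tilde v(\alpha - a_n) \geq n$. For $m \geq n$ we have
\[
v(a_m - a_n) = \tilde v\bigl((a_m - \alpha) + (\alpha - a_n)\bigr) \geq \min\{\tilde v(a_m - \alpha), \tilde v(\alpha - a_n)\} \geq n,
\]
and the left-hand side is a bona fide integer valuation on $\Q$. Hence $(a_n)_n$ is a Cauchy sequence in the complete field $\Q$, so it converges to some $a \in \Q$. By continuity of $\tilde v$ (i.e.\ by the ultrametric once more), $\tilde v(\alpha - a) \geq n$ for every $n$, forcing $\alpha = a \in \Q$, contradicting $\alpha \in E \setminus \Q$.

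The only non-routine input here is the existence of the valuation extension $\tilde v$, which is the standard uniqueness-of-extension theorem for valuations over a complete (Henselian) base field; this is classical and is the only place where anything beyond the ultrametric and completeness is used. Everything else is a direct Cauchy-sequence argument, so I expect no real obstacles.
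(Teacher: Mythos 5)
Your proof is correct and is essentially the argument the paper has in mind: the paper states this Fact without proof, remarking only that it is ``easy to see'' since $\Q$ is complete and locally compact, and your Cauchy-sequence argument (using the unique extension of $v$ to the finite subextension $\Q(\alpha)$, with discrete value group $\tfrac{1}{e}\mathbb{Z}$, plus completeness of $\Q$) is exactly that routine verification. Note that local compactness is not actually needed; completeness alone suffices, as your argument shows.
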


We now show that Fact \ref{Qp is closed in E} holds for any $K\models \Th(\Q)$.

\begin{Lemma}\label{lim is 0 implies roots exist}
Let $K$ be a henselian field, $R=\{x\in K|\ x=0\vee v(x)\geq 0\}$ be the valuation ring of $K$, and  $f(x)\in R[x]$ a polynomial, $D\sq \Gamma_K$ a cofinal subset, and $X=\{x_d|\ d\in D\}\sq R$. If
\[{\lim_{{d\in D,d \to +\infty}}}f(x_d)=0,\]
Then there exist a cofinal subset $I\sq D$ and $a\in K$ such that
\[{\lim_{{i\in I,i \to +\infty}}}x_i=a \ \text{and}\  f(a)=0.\]
\end{Lemma}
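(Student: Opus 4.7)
The plan is to proceed by induction on $n=\deg f$ (the cases $f\equiv 0$ or $f$ a nonzero constant are either trivial or vacuously contradict $v(f(x_d))\to\infty$, so I focus on $n\geq 1$). For the inductive step I would split on the asymptotic behavior of $v(f'(x_d))$ as $d$ ranges over $D$: either (A) there exist $\gamma\in\Gamma_K$ and a cofinal $D'\sq D$ with $v(f'(x_d))\leq\gamma$ for all $d\in D'$, or (B) for every $\gamma\in\Gamma_K$ the set $\{d\in D:v(f'(x_d))\leq\gamma\}$ fails to be cofinal in $\Gamma_K$, which amounts to saying $\lim_{d\in D,\,d\to\infty}f'(x_d)=0$. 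These cases are mutually exclusive and exhaustive.

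In case (A), after replacing $D'$ by a cofinal subset on which $v(f(x_d))>2\gamma+1$ (possible because $v(f(x_d))\to\infty$), I would apply Hensel's lemma to each $x_d$ to obtain $\hat x_d\in R$ with $f(\hat x_d)=0$ and $v(\hat x_d-x_d)\geq\gamma+1$. A short Taylor expansion of $f$ about $\hat x_d$, combined with $v(\hat x_d-x_d)>\gamma\geq v(f'(x_d))$ (which by a second Taylor expansion applied to $f'$ also forces $v(f'(\hat x_d))=v(f'(x_d))$), would then upgrade this to the Newton-type identity
\[
v(\hat x_d-x_d)=v(f(x_d))-v(f'(x_d))\geq v(f(x_d))-\gamma,
\]
so that $v(\hat x_d-x_d)\to+\infty$. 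Since $f$ has at most $n$ roots in $K$, the map $d\mapsto \hat x_d$ takes only finitely many values, and a pigeonhole on cofinal subsets of $\Gamma_K$ produces a single root $a\in K$ of $f$ together with a cofinal $I\sq D'$ on which $\hat x_i=a$ for every $i\in I$. Then $v(x_i-a)=v(x_i-\hat x_i)\to+\infty$, i.e.\ $x_i\to a$, as required.

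In case (B), the inductive hypothesis applied to $f'$ (of degree $n-1$) and the same cofinal set $D$ would produce a cofinal $I\sq D$ and $a\in K$ with $x_i\to a$ and $f'(a)=0$. Continuity of the polynomial $f$ on the topological field $K$ then gives $f(a)=\lim_{i\in I}f(x_i)=0$, closing the induction. The main obstacle I expect is the Taylor-based bootstrap in case (A): the Hensel's lemma quoted in the paper gives only the fixed bound $v(\hat x_d-x_d)\geq\gamma+1$, which by itself is insufficient to yield convergence; one must extract the stronger Newton-type estimate in order to convert the hypothesis $v(f(x_d))\to\infty$ into a genuine limit in $K$. Everything else amounts to careful bookkeeping on cofinal subsets of $\Gamma_K$ and appealing to the finiteness of the root set of $f$.
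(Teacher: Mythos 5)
Your proof is correct and follows the paper's skeleton exactly: induction on $\deg f$, a dichotomy according to whether $v(f'(x_d))$ stays bounded on a cofinal set, Hensel plus a pigeonhole over the finitely many roots of $f$ in the bounded case, and the induction hypothesis applied to $f'$ followed by continuity of $f$ in the other case. The one genuine difference is in your case (A): you apply Hensel's lemma only at the fixed level $n=\gamma$, which forces the extra Taylor/Newton bootstrap $v(\hat x_d-x_d)=v(f(x_d))-v(f'(x_d))$. That bootstrap is valid (the Taylor expansions are legitimate since the Hasse--Taylor coefficients of $f$ at points of $R$ lie in $R$, and $v(\hat x_d-x_d)\geq\gamma+1>v(f'(x_d))$ makes the linear term dominate), but the ``main obstacle'' you flag is not actually present in the paper's treatment: there one arranges $v(f(x_\varepsilon))>4\gamma_0+1$ and applies Hensel's lemma with the parameter $n$ taken as large as possible subject to $2n+1<v(f(x_\varepsilon))$, so that $n\geq \gamma_0\geq v(f'(x_\varepsilon))$ is automatic and the conclusion $v(\hat x_\varepsilon-x_\varepsilon)\geq \frac{v(f(x_\varepsilon))-1}{2}$ already diverges, with no second Taylor expansion needed. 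So your route buys a sharper Newton-type estimate at the cost of extra bookkeeping, while the paper's buys brevity by letting the Hensel parameter grow with $v(f(x_d))$; your slightly different dichotomy (bounded on a cofinal set versus eventually bounded) is equally exhaustive and harmless. One small caveat: the case $f\equiv 0$ is not ``trivial'' --- the statement can genuinely fail there, since the $x_d$ need not cluster along any cofinal subset --- but, exactly as in the paper, this case never enters the induction, because for $\deg f\geq 1$ the derivative $f'$ is a nonzero polynomial of degree $\deg f-1$ and the degree-zero base case is vacuous as you say.
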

\begin{proof}
Induction on $\deg(f)$. Suppose that $f$ has degree $1$, say, $f(x)=\alpha x+\beta$. Then for any $\gamma\in \Gamma_K$, there is $d_0\in D$ such that $v(f(x_d))>\gamma$ for all $d_0<d\in D$. Now $v(\alpha x_d+\beta)>\gamma$ implies that $v(x_d-(-\frac{\beta}{\alpha}))>\gamma-v(\alpha)$. So
\[{\lim_{{d\in D,d \to +\infty}}}|x_d-(-\frac{\beta}{\alpha})|=0\]
 and hence
\[{\lim_{{d\in D, d \to +\infty}}}x_d=-\frac{\beta}{\alpha}\ \text{and}\ f(-\frac{\beta}{\alpha})=0\]
as required.

Now suppose that $\deg(f)=n+1>1$. We see that the derivative $f'$ has degree $n$.

If there are $\gamma_0\in \Gamma_K$ and $\varepsilon_0\in D$ such that $v(f'(x_\varepsilon))\leq \gamma_0$ for all  $\varepsilon_0<\varepsilon\in D$. Take $\varepsilon_0$ sufficiently large such that
\[
v(f(x_\varepsilon))> 4\gamma_0+1
 \]
 for all $\varepsilon_0<\varepsilon\in D$.
 Then, by Hensel's Lemma,  we see that for all $\varepsilon>\varepsilon_0$, there is $\hat x_\varepsilon$ such that
\[v(\hat x_\varepsilon-x_\varepsilon)\geq \frac{v(f(x_\varepsilon))-1}{2} \ \text{and}\ f(\hat x_\varepsilon)=0\]
As $f$ has at most finitely many roots, there is a cofinal subset $I\sq D$ and some $\hat x_\varepsilon\in K$ such that
 \[
v(\hat x_\varepsilon-x_i)>\frac{v(f(x_i))-1}{2}
  \]
  for all $i\in I$. Since $v(f(x_i))\rightarrow +\infty$, we see that $v(\hat x_\varepsilon-x_i)\rightarrow +\infty$. Thus we have
\[{\lim_{{i\in I,i \to +\infty}}}x_i=\hat x_\varepsilon\ \text{and}\  f(\hat x_\varepsilon)=0,\]
as required.

Otherwise, if for every $\gamma\in \Gamma_K$, there is $\gamma<d_\gamma\in D$ such that $v(f'(x_{d_\gamma}))> \gamma$. Then there is a cofinal subset $I=\{d_\gamma|\ \gamma\in \Gamma_K\}\sq D$ such that
\[{\lim_{{i\in I,i \to +\infty}}}f'(x_i)=0,\]
Then, by induction hypothesis, there exist a cofinal subset $J\sq I$ and  $b\in K$ such that
\[{\lim_{{j\in J,j \to +\infty}}}x_j=b \ \text{and}\  f'(b)=0.\]
Since $f$ is continuous, ${\lim_{{j\in J,j \to +\infty}}}f(x_j)=f(b)$. Now $J$ is cofinal in $I$, and $I$ is cofinal in $D$, we conclude that $J$ is cofinal in $D$. This complete the proof.
\end{proof}

\begin{Prop}\label{Prop-K is closed in E}
If $K$ is a henselian field, and $E$ is a finite extension of $K$. Then for any $\alpha\in E\backslash K$, there is $\gamma_0\in \Gamma_K$ such that $v(\alpha-a)<\gamma_0$ for all $a\in K$. Namely, $K$ is closed in $E$.
\end{Prop}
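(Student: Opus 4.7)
The plan is to argue by contradiction and to invoke Lemma~2.4 applied to the minimal polynomial of $\alpha$. Suppose some $\alpha \in E \setminus K$ violates the conclusion, so for every $\gamma \in \Gamma_K$ there is $a_\gamma \in K$ with $v(\alpha - a_\gamma) \geq \gamma$; equivalently, the net $(a_\gamma)_{\gamma \in \Gamma_K}$ of elements of $K$ converges to $\alpha$ in the valuation topology of $E$ (using the unique extension of $v$ to the finite extension $E$, which exists because $K$ is henselian).

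First I would reduce to the case $v(\alpha) \geq 0$. If $v(\alpha) < 0$, the non-archimedean inequality forces $v(a_\gamma) = v(\alpha)$ for all sufficiently large $\gamma$, so the $a_\gamma$ are eventually nonzero, and the identity
\[
v(\alpha^{-1} - a_\gamma^{-1}) \;=\; v(\alpha - a_\gamma) - v(\alpha) - v(a_\gamma) \;=\; v(\alpha - a_\gamma) - 2v(\alpha)
\]
shows $a_\gamma^{-1} \to \alpha^{-1}$ in $E$, with $\alpha^{-1} \in E \setminus K$ of strictly positive valuation. Replacing $\alpha$ by $\alpha^{-1}$ if necessary, I may therefore assume $\alpha$ lies in the valuation ring of $E$; consequently $v(a_\gamma) \geq 0$, i.e.\ $a_\gamma \in R$, for all $\gamma$ in some cofinal $D \subseteq \Gamma_K$.

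Next, let $f(x) \in K[x]$ be the minimal polynomial of $\alpha$ over $K$; it is irreducible of degree $\geq 2$ and hence has no root in $K$. After multiplying by a nonzero $c \in K$ with $v(c)$ at least $-\min_i v(c_i)$ (where the $c_i$ are the coefficients of $f$), I may assume $cf \in R[x]$ without altering the zero set. Continuity of polynomials in the valuation topology gives $(cf)(a_\gamma) \to (cf)(\alpha) = 0$ along $D$, so Lemma~2.4 applied to $cf$ and $\{a_\gamma : \gamma \in D\} \subseteq R$ produces a cofinal $I \subseteq D$ and some $b \in K$ with $\lim_{i \in I} a_i = b$ and $(cf)(b) = 0$, whence $f(b) = 0$ — contradicting the irreducibility of $f$ over $K$.

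The main obstacle is arranging the hypotheses of Lemma~2.4 uniformly, namely $cf \in R[x]$ and $a_\gamma \in R$; the inversion step $\alpha \leftrightarrow \alpha^{-1}$ together with clearing denominators handles this regardless of where $\alpha$ sits relative to $R_E$. Observe that the argument does not even require identifying the limit $b$ with $\alpha$: the mere existence of \emph{any} root of $f$ in $K$ is already the required contradiction.
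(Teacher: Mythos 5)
Your proposal is correct and follows essentially the paper's own route: assume the conclusion fails, feed the approximating sequence together with a rescaled copy of the minimal polynomial into Lemma \ref{lim is 0 implies roots exist} (this is the lemma you call ``Lemma 2.4''), and contradict irreducibility of the minimal polynomial over $K$. The only differences are cosmetic: you use continuity of $cf$ to get $cf(a_\gamma)\to 0$ where the paper passes through the formula $v(\alpha-a)=v(d(a))/k$ for the uniquely extended valuation, and you arrange the hypotheses of the lemma by replacing $\alpha$ with $\alpha^{-1}$ when $v(\alpha)<0$, where the paper instead bounds the approximants in $\delta R$ and rescales the polynomial by the substitution $x\mapsto \delta^{-1}x$.
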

\begin{proof}
By (\cite{E-P-book}, Lemma 4.1.1), the valuation of $K$ extends  uniquely to $E$. For each $\beta\in E$, Let $g(x)=x^n+a_{n-1}x^{n-1}+...+a_1x+a_0$ be the minimal polynomial of $\beta$ over $K$, then the valuation of $\beta$ is exactly $\frac{v(a_0)}{n}$ (See \cite{F.Q.G-book}, Prop. 5.3.4).

Let $\alpha\in E\backslash K$, and $d(x)$ be the minimal polynomial of $\alpha$ over $K$ with degree $k$. Then $d(x+a)$ is the minimal polynomial of $(\alpha-a)$ over $K$ for any $a\in K$. Since $d(x+a)=xf(x)+d(a)$ for some $f(x)\in K[x]$, we see that $v(\alpha-a)=\frac{v(d(a))}{k}$. We claim that there is $\gamma_0\in \Gamma_K$ such that $v(d(a))<\gamma_0$ for all $a\in K$. Otherwise, we will find a sequence $\{a_\gamma|\  \gamma\in \Gamma_K\}$ such that $v(d(a_\gamma))>\gamma$. Replace $d(x)$ by $\epsilon d(x)$ with some $\epsilon$ sufficiently close to $0$, we may assume that $d\in R[x]$. Moreover, fix $\gamma_0\in \Gamma$, if $v(\alpha-a)>\gamma_0$,  and $v(\alpha-b)>2\gamma_0$, then $v(a-b)\geq \gamma_0$. So
\[
\{b\in K|\ v(\alpha-b)>\gamma_0\}\sq \delta_0 R
\]
for some $\delta_0\in K$, and hence
\[
\{b\in K|\ v(d(b))>\gamma_0\}=\{b\in K|\ kv(\alpha-b)>\gamma_0\}\sq k\delta_0 R.
 \]
Let $\delta=k\delta_0$. If $\delta\in R$, then, by Lemma \ref{lim is 0 implies roots exist}, there is $b\in K$ such that $d(b)=0$. However $d$ is minimal polynomial of degree $>1$, so has no roots in $K$. A contradiction.

If $\delta\notin R$, then $\delta^{-1}\in R$. Suppose that
\[
d(x)=d_kx^k+...+d_1x+d_0.
\]
Let
\[
h(x)=d_kx^k+...+d_1\delta^{-k+1}x+d_0\delta^{-k}.
\]
We see that  $h(x)\in R[x]$ and
\[
h(\delta^{-1}x)=d_k(\delta^{-1}x)^k+...+d_1\delta^{-k+1}(\delta^{-1}x)+d_0\delta^{-k}=\delta^{-k}d(x).
\]
Now we have
\[
v(h(\delta^{-1}a_\gamma))=v(\delta^{-k}d(a_\gamma))>\gamma-kv(\delta).
\]
For $\gamma>\gamma_0$, we have $a_\gamma\in \delta R$. Therefore $\delta^{-1} a_\gamma\in R$ for all $\gamma>\gamma_0$.
Applying Lemma \ref{lim is 0 implies roots exist} to $h(x)$, we can find $c\in K$ such that
\[h(c)=h(\delta^{-1}\delta c)=0=\delta^{-k}d(\delta c).\]
So $d(\delta c)=0$. A contradiction.
\end{proof}

Now we assume that  $K$ is an elementary extension of $\Q$ in the language of rings $L_r$. This follow result was proven by \cite{Scowcroft-van den Dries} in the case of $K=\Q$.
\begin{Lemma}\label{partiton-of-definable-functions}
Let $\bar x=(x_1,...,x_m)$ and $f(\bar x,y)=\sum_{i=0}^n p_i(\bar x)y^i\in K[\bar x, y]$. Then there is a partition of
\[
R=\{\bar x\in K^m|\ \bigvee_{i=0}^{n}p_i(\bar x)\neq 0\wedge \exists y (f(\bar x,y)=0)\ \}
\]
into finitely many definable subsets $S$, over each of which $f$ has some fixed number $k\geq 1$ of distinct roots in $K$ with fixed multiplicities $m_1,...,m_k$. For any fixed $\bar x_0\in S$,  let the roots of $f(\bar x_0, y)$ be $r_1,...,r_k$, and $e=\max\{v(r_i-r_j)|\ 1\leq i<j\leq k\}$. Then $\bar x_0$  has a neighborhood $N\sq K^m$, $\gamma\in \Gamma_K$, and continuous, definable functions $F_1,...,F_k: S\cap N\longrightarrow K$ such that for each $\bar x\in S\cap N$,  $F_1(\bar x),...,F_k(\bar x)$ are roots of $f(\bar x, y)$ of multiplicities $m_1,...,m_k$ and $v(F_i(\bar x)-r_i)>2e$.
\end{Lemma}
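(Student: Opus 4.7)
The plan is threefold: (a) stratify $R$ definably by the root-multiplicity type of $f(\bar x, y)$ in $y$; (b) on each stratum $S$ and base point $\bar x_0 \in S$, locally lift the factorization $f(\bar x_0, y) = p_d(\bar x_0) \prod_i (y - r_i)^{m_i} \cdot h(y)$ via the factorization form of Hensel's Lemma in the henselian field $K$; (c) invoke Proposition \ref{Prop-K is closed in E} and a counting argument to show each lifted factor $Q_i(\bar x, y)$ is a pure power $(y - F_i(\bar x))^{m_i}$ on $S \cap N$, from which $F_i$ is recovered.

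For step (a), the condition that $f(\bar x, \cdot)$ has exactly $k$ distinct $K$-roots with multiplicities $(m_1,\ldots,m_k)$ is $L_r$-definable: quantify existentially over $k$ roots $y_1,\ldots,y_k \in K$, express multiplicity $m_l$ at $y_l$ by vanishing of $\partial_y^{(j)} f(\bar x, y_l)$ for $j<m_l$ and non-vanishing at $j = m_l$, and add the universal clause that every $K$-root of $f(\bar x, \cdot)$ equals some $y_l$. Further partitioning by the support $I = \{i : p_i(\bar x) \neq 0\}$ (which determines $\deg_y f$) yields the required finite stratification of $R$.

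For step (b), fix $\bar x_0 \in S$ with roots $r_1, \ldots, r_k$ of multiplicities $m_1, \ldots, m_k$, and write $f(\bar x_0, y) = p_d(\bar x_0) \prod_i (y-r_i)^{m_i} \cdot h(y)$, where $h \in K[y]$ has all its $\bar K$-roots (if any) in $\bar K \setminus K$. The factors $(y - r_i)^{m_i}$ ($i = 1, \ldots, k$) and $h$ are pairwise coprime in $K[y]$. The factorization form of Hensel's Lemma, a standard strengthening of the single-root version via Newton iteration on factor coefficients valid in any henselian field, lifts this factorization uniquely: for $\bar x$ in a definable neighborhood $N$ of $\bar x_0$,
\[
f(\bar x, y) = p_d(\bar x) \prod_{i=1}^{k} Q_i(\bar x, y) \cdot H(\bar x, y),
\]
with $Q_i$ monic in $y$ of degree $m_i$, $H$ monic of degree $\deg h$, $Q_i(\bar x_0, y) = (y-r_i)^{m_i}$, $H(\bar x_0, y) = h(y)$, and all coefficients continuous, definable functions of $\bar x$ (definability coming from uniqueness of the lift).

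For step (c), I argue $Q_i(\bar x, y) = (y - F_i(\bar x))^{m_i}$ on $S \cap N$. Proposition \ref{Prop-K is closed in E} provides, for each $\bar K \setminus K$-root $\alpha$ of $h$, a uniform bound $v(b - \alpha) < \gamma_\alpha$ for all $b \in K$; after shrinking $N$, $H(\bar x, \cdot)$ admits no $K$-roots, and (by continuity of roots applied to $Q_i$ close to $(y-r_i)^{m_i}$) each $Q_i(\bar x, \cdot)$ has its $\bar K$-roots inside the ball $\{y : v(y - r_i) > 2e\}$. These balls are pairwise disjoint by the choice of $e$. Since $\bar x \in S$ fixes the total $K$-root multiplicity of $f(\bar x, \cdot)$ at $\sum m_i = \deg \prod Q_i$ and $H$ contributes nothing, $\prod Q_i$ must split completely in $K[y]$. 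The $k$ distinct $K$-roots of $f(\bar x,\cdot)$ are partitioned by the disjoint balls; each $Q_i$ splits over $K$ with $\deg Q_i = m_i$ and all roots in its ball, contributing at least one distinct $K$-root, so the count $k = \sum_i 1$ forces each $Q_i$ to contribute exactly one root, of multiplicity $m_i$. Hence $Q_i(\bar x, y) = (y - F_i(\bar x))^{m_i}$; using $\operatorname{char} K = 0$, $F_i(\bar x) = -c_i(\bar x)/m_i$ with $c_i$ the $y^{m_i-1}$-coefficient of $Q_i$, which is a continuous definable function with $F_i(\bar x_0) = r_i$. A final shrinking of $N$ forces $v(F_i(\bar x) - r_i) > 2e$ throughout, giving the ball parameter $\gamma$. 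The main obstacle is precisely this factorization form of Hensel's Lemma with definable continuous coefficient dependence, since the excerpt records only the single-root form; but the Newton-iteration proof is purely algebraic and uniqueness of the lift delivers definability and continuity automatically.
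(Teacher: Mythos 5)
Your proposal is correct in outline, but it is worth saying how it sits relative to the paper: the paper's own proof of Lemma \ref{partiton-of-definable-functions} is purely a citation --- it asserts that the proof of Lemma 1.1 of Scowcroft--van den Dries goes through word for word once Fact \ref{Qp is closed in E} is replaced by Proposition \ref{Prop-K is closed in E} --- so you are in effect reconstructing that argument. Your step (a) is the same first-order stratification, and your step (c) uses Proposition \ref{Prop-K is closed in E} in exactly the role the paper highlights: the $\bar K\setminus K$-roots of $h$ stay at definite distance from $K$, so under perturbation $H(\bar x,\cdot)$ acquires no $K$-roots and the counting over the disjoint balls around $r_1,\dots,r_k$ forces one $K$-root of multiplicity $m_i$ per ball. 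Where you genuinely diverge is the device producing the root functions: Scowcroft--van den Dries obtain them as analytic functions by Newton/power-series arguments that use completeness of $\Q$ (which is precisely why the paper downgrades ``analytic'' to ``continuous'' for general $K$), whereas you get continuity and definability directly from a quantitative factorization form of Hensel's Lemma plus the cluster count; this buys a self-contained argument valid in the possibly non-complete $K$, at the price of invoking a Hensel variant the paper never states. On that variant, one caution: since distinct $r_i,r_j$ may satisfy $v(r_i-r_j)>0$, the factors $(y-r_i)^{m_i}$ need not be coprime modulo the maximal ideal, so the residue-field form of Hensel factorization does not apply as stated; you must use the quantitative version in which ``close'' is measured against the valuation of the resultant of the coprime factors (equivalently, rescale $y$ to separate the clusters), and the same quantitative continuity-of-roots estimate is what justifies shrinking $N$ so that $v(F_i(\bar x)-r_i)>2e$. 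Finally, once the counting argument is in place you can bypass the coefficient extraction $F_i=-c_i/m_i$ altogether: define $F_i(\bar x)$ as the unique $K$-root of $f(\bar x,\cdot)$ in the ball $v(y-r_i)>2e$, which is manifestly definable and whose continuity follows from the same estimates.
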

\begin{proof}
The proof of Lemma 1.1 in \cite{Scowcroft-van den Dries}  applies almost word for word to the present context. The only problem is that the authors used Fact \ref{Qp is closed in E} in their proof. But the Proposition \ref{Prop-K is closed in E} saying that we could replace $\Q$ by arbitrary $K\models \Th(\Q)$ in our argument.
\end{proof}

\begin{Rmk}
Lemma 1.1 of \cite{Scowcroft-van den Dries} saying that  definable functions $F_1,...,F_k$ are not only continuous but analytic. However we can't proof it in arbitrary $K\models \Th(\Q)$ as $K$ might not be complete as a topological field.
\end{Rmk}

Similarly, Lemma 1.3 in \cite{Scowcroft-van den Dries} could be generalized to arbitrary $K\models \Th(\Q)$ as follows:
\begin{Lemma}
If $A\sq K^m$ and $f:A\longrightarrow K$ is definable. Then there is a definable set $B\sq A$, open in $K^m$ such that $A\backslash B$ has no interior and $f$ is continuous on $B$.
\end{Lemma}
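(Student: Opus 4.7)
The plan is to follow Scowcroft--van den Dries's proof of their Lemma 1.3 essentially verbatim, using our Lemma \ref{partiton-of-definable-functions} in place of their Lemma 1.1. As in the previous proof, the only $\Q$-specific input in their argument was the closedness of $\Q$ in its finite extensions, and this has been replaced by Proposition \ref{Prop-K is closed in E}, so the whole argument should transfer to an arbitrary $K\models \Th(\Q)$.

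First I would apply Macintyre's quantifier elimination to write the formula defining the graph $\Gamma_f=\{(\bar x, f(\bar x)): \bar x\in A\}\sq K^{m+1}$ in disjunctive normal form over $L=L_r\cup\{P_n: n\geq 1\}$. This partitions $A$ into finitely many definable subsets $A_1,\ldots,A_r$, on each of which $\Gamma_f$ is cut out by a single conjunction of atomic and negated atomic formulas: polynomial equations and inequations $p(\bar x,y)=0$, $p(\bar x,y)\neq 0$, valuation inequalities $v(p)\leq v(q)$, and power-predicate conditions $P_n(p)$ or their negations. Producing the desired open $B$ of the lemma thus reduces to producing one inside each $A_j$ separately and then taking the union.

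Second, on a fixed $A_j$, I would distinguish whether some polynomial equation $p(\bar x,y)=0$ in the defining conjunction is nonzero as a polynomial in $y$. If so, apply Lemma \ref{partiton-of-definable-functions} to partition $A_j$ into definable pieces $S$, where each $\bar x_0\in S$ has a neighborhood $N$ and continuous definable root functions $F_1,\ldots,F_k: S\cap N\to K$. Since $f(\bar x)$ must coincide with one of the $F_i(\bar x)$ for every $\bar x\in S\cap N$, a further definable partition of $S\cap N$ according to the index $i$ with $F_i(\bar x)=f(\bar x)$ yields $f=F_i$ on each piece, hence continuity on its interior in $K^m$. If on the other hand no polynomial equation in the conjunction is nonzero in $y$, then $f(\bar x)$ is constrained only by valuation and power-predicate conditions; these define open subsets of $K$ in the variable $y$, so single-valuedness of $f$ on any open ball inside $A_j$ would force a single point to be isolated by an open condition, contradicting Fact \ref{alg-dimension and topological dimension}(iv). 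Hence $A_j$ has no interior and one may take $B_j=\emptyset$.

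The main obstacle is the clean treatment of the ``generic'' second case, where one must rule out a definable single-valued function whose graph is cut out purely by open conditions; this is a first-order phenomenon controlled by Fact \ref{alg-dimension and topological dimension} together with the definability of the valuation (Fact \ref{valuation-is-definable}), so it transfers from $\Q$ to $K$ without essential change. Everything else is a routine re-run of the Scowcroft--van den Dries argument, given that their one $\Q$-specific ingredient, the closedness of $\Q$ in its finite extensions, has been upgraded to our Proposition \ref{Prop-K is closed in E}.
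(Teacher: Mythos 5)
Your proposal takes essentially the same route as the paper, whose proof consists precisely of the remark that the argument for Lemma 1.3 of Scowcroft--van den Dries transfers word for word once the closedness of $\Q$ in its finite extensions is replaced by Proposition \ref{Prop-K is closed in E} (through Lemma \ref{partiton-of-definable-functions}). Your sketch of that transfer is sound in outline; only note that negated $P_n$-conditions and valuation inequalities are not literally open (they contain zero loci), so the ``no $y$-equation'' case needs the standard further case-splitting, which is part of the routine re-run of the original argument that you already defer to.
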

\begin{proof}
The proof of Lemma 1.3 in \cite{Scowcroft-van den Dries}  applies almost word for word to the present context.
\end{proof}

\subsection{Dimensions}

We now assume that $K$ is an elementary extension of $\Q$.

\begin{Lemma}\label{dim(X)=dim(f(X))}
Suppose that $A\sq K$, $X,Y$ are $A$-definable in $K$,  $f:X\longrightarrow Y$ is an $A$-definable function. If $f$ is a finite-to-one map, $\dim(X)=\dim(f(X))$.
\end{Lemma}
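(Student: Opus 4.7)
The plan is to leverage Fact \ref{alg-dimension and topological dimension}, which reduces topological dimension in a geometry structure to a maximum of algebraic dimensions of generic tuples in a saturated extension. The equality $\dim(X)=\dim(f(X))$ will follow by showing that, fiber by fiber, passing from $\bar a$ to $f(\bar a)$ preserves algebraic dimension over $A$.

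First I would pass to a saturated extension $\K\succ K$ and note that the $A$-definable function $f$ extends to a finite-to-one function $f:X(\K)\longrightarrow Y(\K)$, and that $f(X)$ is $A$-definable via the existential quantifier. By Fact \ref{alg-dimension and topological dimension}(ii),
\[
\dim(X)=\max\{\dim(\bar a/A)\mid \bar a\in X(\K)\},\qquad \dim(f(X))=\max\{\dim(\bar b/A)\mid \bar b\in f(X)(\K)\}.
\]

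Next, for any $\bar a\in X(\K)$, I would establish the two key algebraic closure relations. Since $f$ is a function, each coordinate of $f(\bar a)$ lies in $\dcl(A,\bar a)\subseteq\acl(A,\bar a)$, so $\dim(f(\bar a)/A,\bar a)=0$. Since $f$ is $A$-definable and finite-to-one, the fiber $f^{-1}(f(\bar a))$ is a finite $(A\cup\{f(\bar a)\})$-definable set containing $\bar a$, so $\bar a\in\acl(A,f(\bar a))$ coordinatewise and $\dim(\bar a/A,f(\bar a))=0$. Applying the additivity formula Fact \ref{alg-dimension and topological dimension}(i) in both orders to the concatenated tuple $(\bar a,f(\bar a))$ gives
\[
\dim(\bar a/A)=\dim(\bar a,f(\bar a)/A)=\dim(f(\bar a)/A).
\]

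Finally I would close the argument in both directions: choosing $\bar a\in X(\K)$ with $\dim(\bar a/A)=\dim(X)$ yields $\dim(f(X))\geq\dim(f(\bar a)/A)=\dim(X)$; conversely, choosing $\bar b\in f(X)(\K)$ realizing $\dim(f(X))$ and picking any preimage $\bar a\in f^{-1}(\bar b)\subseteq X(\K)$ yields $\dim(X)\geq\dim(\bar a/A)=\dim(\bar b/A)=\dim(f(X))$. I do not anticipate a real obstacle here; the only point that needs care is verifying that finite-to-one $A$-definability truly delivers $\bar a\in\acl(A,f(\bar a))$, which reduces to the standard observation that every element of a finite $B$-definable set is in $\acl(B)$.
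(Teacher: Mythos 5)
Your proposal is correct and follows essentially the same route as the paper: pass to a saturated extension, show $\bar a\in\acl(A,f(\bar a))$ from finiteness of the fiber, apply the additivity formula of Fact \ref{alg-dimension and topological dimension}(i), and conclude via Fact \ref{alg-dimension and topological dimension}(ii). The only point the paper makes explicit that you pass over lightly is why finite-to-one-ness transfers to $X(\K)$: the paper invokes Fact \ref{alg-dimension and topological dimension}(iii) (equivalently, compactness) to get a uniform bound $r$ on fiber size, which is what guarantees the fibers stay finite in the saturated model.
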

\begin{proof}
Let $\K$ be a saturated elementary extension of $K$. By  Fact \ref{alg-dimension and topological dimension} (iii), there is $r\in \N$ such that $|f^{-1}(y)|\leq r$ for all $y\in Y(\K)$.  For any  $a\in X(\K)$,
 since
 \[
 |\{b\in X|\ f(b)=f(a)\}|\leq r,
 \]
  we see that $a\in \acl(A, f(a))$. So $\dim(a/A,f(a))=0$. By Fact \ref{alg-dimension and topological dimension} (i) we have
\[
\dim(a/A)=\dim(a,f(a)/A)=\dim(a/A,f(a))+\dim(f(a)/A)=0+\dim(f(a)/A).
\]
So $\dim (a/A)=\dim(f(a)/A)$. By Fact\ref{alg-dimension and topological dimension} (ii), we conclude that $\dim(X)=\dim(Y)$.
\end{proof}

\begin{Lemma}\label{dim(X)>dim(f(X))}
Suppose that $A\sq K$,  $f:X\longrightarrow Y$ is an $A$-definable function in $K$. Then
\[\dim(X)\geq\dim(f(X)).\]
\end{Lemma}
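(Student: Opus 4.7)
The plan is to mirror the strategy of Lemma 2.5, trading the finite-to-one hypothesis (which gave $\dim(a/A,f(a))=0$) for the fact that $f$ is a function (which gives $\dim(f(a)/A,a)=0$), and then applying the additivity clause of Fact \ref{alg-dimension and topological dimension} in the reverse direction.

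In detail, I would first pass to a saturated elementary extension $\K \succ K$ and use Fact \ref{alg-dimension and topological dimension} (ii) to characterize both $\dim(X)$ and $\dim(f(X))$ as suprema of algebraic dimensions over $A$ of tuples in $X(\K)$ and $f(X)(\K)$ respectively; note that $f(X)$ is $A$-definable since $f$ and $X$ are, so these suprema make sense over $A$. Pick $b\in f(X)(\K)$ with $\dim(b/A)=\dim(f(X))$. Because $b\in f(X)(\K)$, saturation (or just the definition of image) yields some $a\in X(\K)$ with $f(a)=b$.

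The key observation is that $f(a)\in \dcl(A,a)\sq \acl(A,a)$, so $\dim(f(a)/A,a)=0$. Applying Fact \ref{alg-dimension and topological dimension} (i) to the pair $(a,f(a))$ in both orders gives
\[
\dim(a,f(a)/A)\;=\;\dim(f(a)/A,a)+\dim(a/A)\;=\;\dim(a/A),
\]
and also
\[
\dim(a,f(a)/A)\;=\;\dim(a/A,f(a))+\dim(f(a)/A)\;\geq\;\dim(f(a)/A).
\]
Combining these two identities yields $\dim(a/A)\geq \dim(f(a)/A)=\dim(b/A)=\dim(f(X))$. Since $\dim(X)\geq \dim(a/A)$ by Fact \ref{alg-dimension and topological dimension} (ii), we conclude $\dim(X)\geq \dim(f(X))$.

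There is no real obstacle here: the argument is essentially bookkeeping with the additivity formula, and the only thing one has to be careful about is that the parameter set $A$ is preserved when lifting $b\in f(X)(\K)$ to a preimage $a\in X(\K)$ — but this is immediate since we are working in a saturated model and $f^{-1}(b)\cap X$ is $A\cup\{b\}$-definable and nonempty. The entire proof should occupy only a few lines.
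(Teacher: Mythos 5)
Your proposal is correct and follows essentially the same route as the paper: both rest on the observation that $f(a)\in\acl(A,a)$, so additivity (Fact \ref{alg-dimension and topological dimension} (i)) gives $\dim(a/A)=\dim(a,f(a)/A)\geq\dim(f(a)/A)$, and then Fact \ref{alg-dimension and topological dimension} (ii) transfers this to the definable sets. Your version merely makes explicit the step $\dim(f(a)/A,a)=0$ and the choice of a dimension-maximizing point, which the paper leaves implicit.
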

\begin{proof}
Generally, we have
\[
\dim(a/A)=\dim(a,f(a)/A)=\dim(a/A,f(a))+\dim(f(a)/A)\geq \dim(f(a)/A).
\]
By Fact\ref{alg-dimension and topological dimension} (ii), we conclude that $\dim(X)\geq\dim(Y)$.
\end{proof}

\begin{Cor}\label{dim(X)=dim(f(X))-ii}
Suppose that $A\sq K$,  $f:X\longrightarrow Y$ is an $A$-definable bijection function in $K$. Then
\[\dim(X)=\dim(f(X)).\]
\end{Cor}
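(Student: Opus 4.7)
The plan is to observe that this corollary follows immediately from Lemma \ref{dim(X)=dim(f(X))}. Indeed, a bijection $f\colon X\longrightarrow Y$ is in particular a $1$-to-$1$ map, so $|f^{-1}(y)|=1$ for every $y\in Y$, which is the strongest possible instance of being finite-to-one (take $r=1$ in the argument of Lemma \ref{dim(X)=dim(f(X))}). Therefore the hypothesis of that lemma is satisfied, and we conclude $\dim(X)=\dim(f(X))$.

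If one prefers a self-contained argument avoiding a direct appeal to Lemma \ref{dim(X)=dim(f(X))}, a symmetric approach works. Since $f$ is an $A$-definable bijection, its inverse $f^{-1}\colon Y\longrightarrow X$ is also an $A$-definable function (defined by the $L_A$-formula $\psi(y,x)\equiv \varphi(x,y)$, where $\varphi$ defines the graph of $f$). Applying Lemma \ref{dim(X)>dim(f(X))} to $f$ gives $\dim(X)\geq \dim(f(X))=\dim(Y)$, while applying it to $f^{-1}$ gives $\dim(Y)\geq \dim(f^{-1}(Y))=\dim(X)$. Combining these two inequalities yields the desired equality.

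There is no real obstacle here, since both lemmas on which the corollary rests have already been established. The only minor point worth noting is that definability of $f^{-1}$ over the same parameter set $A$ is automatic from the definition of the graph; beyond that, the argument is purely formal. For this reason I expect the paper's proof to consist of a single sentence invoking Lemma \ref{dim(X)=dim(f(X))}.
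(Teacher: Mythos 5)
Your proposal is correct, and your second (symmetric) argument is exactly the paper's proof: it applies Lemma \ref{dim(X)>dim(f(X))} to $f$ and to the definable inverse $f^{-1}$ to get $\dim(X)\geq \dim(Y)\geq \dim(X)$. Your first route, invoking Lemma \ref{dim(X)=dim(f(X))} since a bijection is finite-to-one, is equally valid (and even more direct), though contrary to your prediction it is not the route the paper takes.
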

\begin{proof}
$f^{-1}$ is a definable function as $f$ is bijection. So we conclude that
\[
\dim(X)\geq \dim(f(X))=\dim(Y)\geq \dim(f^{-1}(Y))=\dim(X).
\]
\end{proof}

\begin{Lemma}\label{dim(X,Y)=max{dim(X),dim(Y)}}
Suppose that  $X,Y$ are $A$-definable in $K$. Then
\[\dim(X\cup Y)=\max\{\dim(X),\dim(Y)\}.\]
\end{Lemma}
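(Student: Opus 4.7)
The plan is to reduce everything to the algebraic-dimension characterisation in Fact \ref{alg-dimension and topological dimension}(ii) by passing to a saturated elementary extension $\K\succ K$. Since $X$ and $Y$ are $A$-definable, say by $L_A$-formulas $\phi(\bar x)$ and $\psi(\bar x)$, the union $X\cup Y$ is $A$-definable by $\phi(\bar x)\vee \psi(\bar x)$, and therefore $(X\cup Y)(\K)=X(\K)\cup Y(\K)$.

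Next I would apply Fact \ref{alg-dimension and topological dimension}(ii) to all three sets simultaneously, writing
\[
\dim(X\cup Y)=\max\{\dim(\bar a/A)\mid \bar a\in X(\K)\cup Y(\K)\},
\]
and splitting the maximum over the two subsets to obtain
\[
\max\{\dim(\bar a/A)\mid \bar a\in X(\K)\cup Y(\K)\}=\max\bigl\{\max_{\bar a\in X(\K)}\dim(\bar a/A),\ \max_{\bar a\in Y(\K)}\dim(\bar a/A)\bigr\},
\]
which is exactly $\max\{\dim(X),\dim(Y)\}$ by another application of Fact \ref{alg-dimension and topological dimension}(ii). The inequality $\dim(X\cup Y)\geq\max\{\dim(X),\dim(Y)\}$ also admits an immediate direct proof: any realisation of the dimension of $X$ (resp. $Y$) lies in $X(\K)\cup Y(\K)$, and so contributes to $\dim(X\cup Y)$.

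There is no real obstacle here; the argument is essentially bookkeeping once Fact \ref{alg-dimension and topological dimension}(ii) is in hand. The only point worth noting is that we do need saturation to ensure the maximum in the algebraic-dimension characterisation is attained, which is why passing to $\K$ (rather than working inside $K$) is essential.
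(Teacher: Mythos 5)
Your proposal is correct and is exactly the paper's argument: the paper's proof consists solely of citing Fact \ref{alg-dimension and topological dimension}(ii), and your write-up simply spells out the bookkeeping (noting $(X\cup Y)(\K)=X(\K)\cup Y(\K)$ and splitting the maximum) that this citation implicitly relies on.
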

\begin{proof}
By Fact\ref{alg-dimension and topological dimension} (ii).
\end{proof}

\begin{Lemma}\label{finite-to-one-dim(Y)=dim(X)}
Let $X\sq K^n$. Then $\dim(X)$ is the minimal $k\leq n$ such that  there is definable $Y\sq X$ with $\dim(Y)=\dim(X)$ and projection
\[
\pi:X\longrightarrow K^k;\ \ (x_1,...,x_n)\mapsto (x_{r_1},...,x_{r_k})
\]
is a finite-to-one map on $Y$, for  suitable  $1\leq r_1<...<r_k\leq n$.
\end{Lemma}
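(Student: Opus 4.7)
The plan is to prove two opposite inequalities. Writing $d := \dim(X)$, I will establish (a) that for any definable $Y \sq X$ with $\dim(Y) = d$ on which some coordinate projection $\pi : X \to K^k$ restricts to a finite-to-one map, one has $k \geq d$; and (b) that such $Y$ and $\pi$ exist with $k = d$. Direction (a) is immediate from Lemma \ref{dim(X)=dim(f(X))}, applied to $\pi|_Y : Y \to \pi(Y)$: a definable finite-to-one surjection preserves dimension, so $d = \dim(Y) = \dim(\pi(Y)) \leq k$.

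For direction (b), fix a finite parameter set $A \sq K$ over which $X$ is defined, and pass to a saturated elementary extension $\K \succ K$. By Fact \ref{alg-dimension and topological dimension}(ii), pick $\bar a = (a_1, \ldots, a_n) \in X(\K)$ with $\dim(\bar a / A) = d$. By the very definition of algebraic dimension there exist $1 \leq r_1 < \ldots < r_d \leq n$ such that the subtuple $(a_{r_1}, \ldots, a_{r_d})$ is algebraically independent over $A$; maximality of $d$ then forces $a_j \in \acl(A, a_{r_1}, \ldots, a_{r_d})$ for every $j \notin \{r_1, \ldots, r_d\}$, for otherwise one could extend the independent subtuple. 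Let $\pi$ denote the projection onto the coordinates $(r_1, \ldots, r_d)$. Since each non-projected coordinate of $\bar a$ is algebraic over $A \cup \{a_{r_1}, \ldots, a_{r_d}\}$, the type of $\bar a$ over those parameters has only finitely many realizations, so the fiber $\pi^{-1}(\pi(\bar a)) \cap X(\K)$ is finite.

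It remains to cut this down to a definable $Y \sq X$ of dimension $d$ on which $\pi$ is finite-to-one. Apply Fact \ref{alg-dimension and topological dimension}(iii) with $r = 0$ to the formula defining $X$ viewed as parametrised by the projected coordinates: the set $W := \{\bar c \in K^d \mid \dim_K(\pi^{-1}(\bar c) \cap X) \leq 0\}$ is $A$-definable, and its defining formula has the same interpretation in $\K$ by the Remark following that Fact. Set $Y := X \cap \pi^{-1}(W)$. Then $\pi(\bar a) \in W(\K)$ by the previous paragraph, so $\bar a \in Y(\K)$; hence $\dim(Y) \geq d$ and therefore $\dim(Y) = d$ since $Y \sq X$, while $\pi|_Y$ is finite-to-one by construction. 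The one minor technical point to watch is the equivalence, for definable $S \sq K^{n-d}$, between $\dim_K(S) \leq 0$ and $S$ being finite: if $\dim(S) = 0$ then no single-coordinate projection has interior in $K$, so by Fact \ref{alg-dimension and topological dimension}(iv) each $\pi_i(S)$ is finite, whence $S$ lies in the finite product $\prod_i \pi_i(S)$ and is itself finite. With this observation in place, the two directions combine to give $\dim(X) = \min\{k\}$ as claimed.
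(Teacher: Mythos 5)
Your overall strategy is sound and in fact close to the paper's: the lower bound via Lemma \ref{dim(X)=dim(f(X))} is exactly the paper's first step, and your use of Fact \ref{alg-dimension and topological dimension}(iii) with $r=0$ to carve out the definable locus $W$ of zero-dimensional fibres plays the role of the paper's set $Z_0$ of finite fibres. Your handling of the technical point that $\dim\leq 0$ implies finite is also correct. But there is one genuine gap, at the step where you claim that the fibre $\pi^{-1}(\pi(\bar a))\cap X(\K)$ over your generic point is finite. You justify this by saying that each non-projected coordinate of $\bar a$ lies in $\acl(A,a_{r_1},\ldots,a_{r_d})$, so $\tp(\bar a/A,\pi(\bar a))$ has finitely many realizations. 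That only bounds the set of conjugates of $\bar a$ over $A\cup\{a_{r_1},\ldots,a_{r_d}\}$; the fibre consists of \emph{all} points of $X(\K)$ projecting to $\pi(\bar a)$, including points realizing other types over these parameters, and algebraicity of the single point $\bar a$ says nothing about those. (Indeed, if you drop genericity the inference is simply false: for $X=\{(x,y):y=0\}\cup(\{0\}\times K)$ and $\bar a=(0,0)$, the point is algebraic over its first coordinate, yet the fibre over $0$ is infinite.)

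The conclusion you need is nevertheless true, but it requires the dimension count you never invoke, namely that $\dim(\pi(\bar a)/A)=d=\dim(X)$. This is precisely the paper's Claim: if the fibre over $\bar c=\pi(\bar a)$ were infinite, hence of dimension $\geq 1$, then by saturation and Fact \ref{alg-dimension and topological dimension}(ii) there would be $\bar b$ in the fibre with $\dim(\bar b/A,\bar c)\geq 1$, and since $\bar c$ is a subtuple of $\bar b$, Fact \ref{alg-dimension and topological dimension}(i) gives
\[
\dim(\bar b/A)=\dim(\bar b/A,\bar c)+\dim(\bar c/A)\geq 1+d,
\]
contradicting $\bar b\in X(\K)$ and $\dim(X)=d$. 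Inserting this argument in place of your non sequitur repairs the proof, and with that repair your route (localizing at one generic point and using $W$) is a perfectly good, slightly more streamlined variant of the paper's proof, which instead shows globally that the set $Z_1$ of base points with infinite fibres has dimension $<d$ and then restricts to $f^{-1}(Z_0)$.
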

\begin{proof}
Let $k$ be as above and  $\pi:X\longrightarrow K^k$ be a projection with $\pi(x_1,...,x_n)=(x_{r_1},...,x_{r_k})$. If $Y\sq X$ such that the restriction $\pi\upharpoonright Y: Y\longrightarrow K^k$ is a finite-to-one map. Then by Lemma \ref{dim(X)=dim(f(X))} we have $\dim(Y)=\dim(\pi(Y))$ and hence
\[
\dim(X)=\dim(Y)=\dim(\pi(Y))\leq k.\]

Now suppose  that $\dim(X)=l\leq k$. Without loss of generality, we assume that $f:X\longrightarrow K^l$; $(x_1,...,x_n)\mapsto (x_1,...,x_l)$ is a projection such that $f(X)$ has nonempty interior. We claim that
\begin{Claim}
Let $Z_0=\{b\in K^{l}|\ f^{-1}(b) \ \text{is finite}\ \}$ and $Z_1=K^{l}\backslash Z_0$. Then $\dim(Z_1)<l$.
\end{Claim}
\begin{proof}
Clearly,
\[
Z_1=\{b\in K^{l}|\ \dim(f^{-1}(b))\geq 1\}
 \]
 is definable in $K$.  If $\dim(Z_1)=l$. Then, there is $\beta\in Z_1(\K)$ such that $\dim(\beta/A)=l$, where is $\K\succ K$ is saturated. Since $\dim(f^{-1}(\beta))\geq 1$, by Fact \ref{alg-dimension and topological dimension} (ii), there is $\alpha\in \dim(f^{-1}(\beta))$ such that $\dim(\alpha/A,\beta)\geq 1$. By Fact \ref{alg-dimension and topological dimension} (i), we conclude that
\[
\dim(\alpha/A)=\dim(\alpha,f(\alpha)/A)=\dim(a/A,f(\alpha))+\dim(f(\alpha)/A)\geq l+1.
\]
But $\dim(\alpha/A)\leq \dim(X)=l$. A contradiction.
\end{proof}
Since $\dim(Z_1)<l$, by Lemma \ref{dim(X,Y)=max{dim(X),dim(Y)}}, $\dim(Z_0)=l$. The restriction of $f$ on $f^{-1}(Z_0)$ is a finite-to-one map, we conclude that
\[
\dim(f^{-1}(Z_0))=\dim(Z_0)=l=\dim(X)
\]
by Lemma  \ref{dim(X)=dim(f(X))}. Now $\dim(f^{-1}(Z_0))=\dim(X)$ and the restriction of $f$ on $f^{-1}(Z_0)$ is a finite-to-one map. So $k\leq l$ as $k$ is minimal. We conclude that $k=l=\dim(X)$ as required.
\end{proof}

\begin{Cor}\label{finite-partition-finite-to-one}
Let $X\sq K^n$ be definable with $\dim(X)=k$. Then there exists a  partition of $X$ into finitely many $K$-definable subsets $S$ such that whenever $\dim(S)=\dim(X)$, there is a  projection $\pi_S: S\longrightarrow K^k$ on $k$ suitable coordinate axes which is finite-to-one.
\end{Cor}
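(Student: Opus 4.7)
The plan is to exhibit a canonical partition of $X$ indexed by coordinate projections, according to which fibers of $X$ are finite. For each $k$-subset $I\sq\{1,\ldots,n\}$, let $\pi_I\colon K^n\to K^k$ denote the projection onto the coordinates in $I$, and set
\[
A_I=\bigl\{x\in X\ :\ \pi_I^{-1}(\pi_I(x))\cap X\ \text{is finite}\bigr\}.
\]
By Fact \ref{alg-dimension and topological dimension}(iii), each $A_I$ is $K$-definable, and by the very definition $\pi_I$ restricted to $A_I$ (and hence to any subset of $A_I$) is finite-to-one. The corollary will follow once one establishes that $T:=X\setminus\bigcup_I A_I$ has $\dim(T)<k$.

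The substantive step is therefore this dimension bound, which I would prove by a dimension-jump argument in a saturated extension. Let $A\sq K$ be a finite parameter set over which $X$, every $A_I$, and $T$ are defined, and let $\K\succ K$ be sufficiently saturated. Supposing $\dim(T)=k$, Fact \ref{alg-dimension and topological dimension}(ii) produces $a\in T(\K)$ with $\dim(a/A)=k$; by the pregeometry definition of dimension some $k$ coordinates of $a$ are algebraically independent over $A$, so for the corresponding index set $I$ one has $\dim(\pi_I(a)/A)=k$. Since $a\notin A_I$, the fiber $F:=\pi_I^{-1}(\pi_I(a))\cap X(\K)$ is infinite; saturation (together with the fact that $\acl(A\cup\{\pi_I(a)\})$ is too small to contain $F$) yields some $b\in F$ with $\dim(b/A,\pi_I(a))\geq 1$. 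Because $\pi_I(b)=\pi_I(a)$, we have $\pi_I(a)\in\dcl(b)$, so by Fact \ref{alg-dimension and topological dimension}(i),
\[
\dim(b/A)=\dim(b,\pi_I(a)/A)=\dim(b/A,\pi_I(a))+\dim(\pi_I(a)/A)\geq 1+k,
\]
contradicting $b\in X(\K)$ and $\dim(X)=k$.

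Assembling the partition is then routine. Enumerate the $k$-subsets as $I_1,\ldots,I_N$ and set $S_j:=A_{I_j}\setminus\bigcup_{i<j}A_{I_i}$; the collection $\{S_1,\ldots,S_N,T\}$, discarding empty members, is a finite $K$-definable partition of $X$. Each $S_j\sq A_{I_j}$ admits the finite-to-one projection $\pi_{I_j}$ onto $k$ coordinate axes, and because $\dim(T)<k=\dim(X)$ the hypothesis $\dim(S)=\dim(X)$ imposes no condition on $T$. The main obstacle is the dimension-jump step of the second paragraph, which requires the pregeometric characterization of dimension, additivity of algebraic dimension from Fact \ref{alg-dimension and topological dimension}(i), and saturation to plant a positive-dimension point inside the fiber $F$; once these three ingredients are in place the rest of the construction is bookkeeping.
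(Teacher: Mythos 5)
Your proof is correct, but it is organized differently from the paper's. The paper obtains the corollary by iterating Lemma \ref{finite-to-one-dim(Y)=dim(X)}: at each stage it peels off a full-dimensional piece $S_i$ admitting a finite-to-one projection, passes to $X_{i+1}=X_i\setminus S_i$ with a new coordinate set $D_{i+1}\in[n]^k$, and terminates because $[n]^k$ is finite. You instead define all the finite-fiber loci $A_I$ simultaneously and prove in one stroke that the residual set $T=X\setminus\bigcup_I A_I$ has dimension $<k$, via the additivity computation $\dim(b/A)=\dim(b/A,\pi_I(a))+\dim(\pi_I(a)/A)\geq k+1$ in a saturated extension; this is essentially the same dimension-jump as in the Claim inside the paper's proof of Lemma \ref{finite-to-one-dim(Y)=dim(X)}, but you add the observation that a generic point of $T$ has an algebraically independent $k$-subtuple of coordinates, which pins down \emph{which} projection $\pi_I$ must have an infinite fiber. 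What your route buys is a cleaner, non-iterative argument that avoids the bookkeeping of choosing fresh coordinate sets $D_{i+1}\neq D_i$ at each stage (a slightly murky point in the paper's write-up); what it gives up is reuse of Lemma \ref{finite-to-one-dim(Y)=dim(X)} as a black box, since you reprove its key step inside the corollary. Two small points you share with the paper and could make explicit: the identification of ``finite fiber'' with ``fiber of dimension $\leq 0$'' (needed so that Fact \ref{alg-dimension and topological dimension}(iii) yields definability of $A_I$, and so that $a\notin A_I(\K)$ yields an infinite fiber in $\K$), and the fact that dimension of a definable set is invariant under elementary extension, which is what lets you read the defining condition of $A_I$ off in $\K$.
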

\begin{proof}
Let $X_0=X$ and $[n]^k$ be the set of all subset of $\{1,...,n\}$ of cardinality $k$. By Lemma \ref{finite-to-one-dim(Y)=dim(X)}, there exist  $D_0=\{r_1,...,r_k\}\in [n]^k$,   $S_0\sq X$ with $\dim (S_0)=\dim(X_0)$, such that the projection
\[
\pi:(x_1,...,x_n)\mapsto (x_{r_1},...,x_{r_k})
\]
is  finite-to-one on $S_0$ and infinite-to-one on $X_0\backslash S_0$.
If $\dim(X_0\backslash S_0)<\dim(X_0)$, then the partition $\{X_0\backslash S_0, S_0\}$ meets our requirements.

Otherwise, let $X_1=X_0\backslash S_0$, we could find $D_1\in [n]^k\backslash \{D_0\}$ and $S_1\in X_1$  such that the projection on coordinate axes from $D_1$ is finite-to-one over $S_1$.
Repeating the above steps, we obtained  sequences $X_i$ and $S_i$ such that $X_{i+1}=X_i\backslash S_i$. As $[n]^k$ is finite, there is a minimal $t\in \N$ such that  $\dim(X_t)<\dim(X_0)$ and $\dim(S_i)=\dim(X_0)$ for all $i<t$. It is easy to see that $\{S_0,...,S_{t-1}, X_t\}$ meets our requirements.
\end{proof}

Recall that by \cite{L-Dries Skolem}, $\Th(\Q)$ admits definable Skolem functions.  Namely, we have
\begin{Fact}\cite{L-Dries Skolem}\label{Skolem-functions}
Let $A\sq K$ and $\phi(\bar x, y)$ be a $L_A$-formula such that
\[
K\models \forall \bar x\exists y\phi(\bar x,y).
\]
Then there $A$-definable function $f: K^m\rightarrow K$ such that $K\models \forall \bar x\phi(\bar x,f(\bar x))$.
\end{Fact}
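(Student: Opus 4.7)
The plan is to reduce the problem to producing a canonical witness on each piece of a cell decomposition of the set defined by $\phi(\bar x, y)$, and then glue the pieces together using a fixed enumeration. The underlying tools are Macintyre's quantifier elimination, which we already have in this context, and Denef's cell decomposition theorem for $p$-adically closed fields, which applies to $K$ since it is an elementary extension of $\Q$.

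First, I would let $S = \{(\bar x, y)\in K^{m+1} : \phi(\bar x, y)\}$ and apply cell decomposition to write $S = C_1 \sqcup \cdots \sqcup C_N$, where each $C_i$ is $A$-definable and has one of two forms. Either (a) $C_i = \{(\bar x, y) : \bar x \in D_i,\ y = \sigma_i(\bar x)\}$ for an $A$-definable function $\sigma_i : D_i \to K$ (a graph cell); or (b)
\[
C_i = \bigl\{(\bar x, y) : \bar x \in D_i,\ v(a_i(\bar x)) \,\square_1\, v(y-\sigma_i(\bar x)) \,\square_2\, v(b_i(\bar x)),\ y-\sigma_i(\bar x) \in \lambda_i P_{n_i}\bigr\}
\]
for $A$-definable $\sigma_i, a_i, b_i$ on $D_i$, symbols $\square_1, \square_2 \in \{<,\leq, \text{nothing}\}$, and constants $\lambda_i \in K$, $n_i \in \N^+$ (a fat cell). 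In either case the projection $D_i = \pi(C_i) \subseteq K^m$ is $A$-definable.

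Next, I would construct on each cell $C_i$ an $A$-definable section $\tau_i : D_i \to K$ such that $(\bar x, \tau_i(\bar x)) \in C_i$ for every $\bar x \in D_i$. For graph cells this is trivial: set $\tau_i = \sigma_i$. For fat cells the idea is that cosets of $\lambda_i P_{n_i}$ in a given valuation class admit canonical representatives of the form $\lambda_i \cdot u \cdot p^{k}$ for $u$ a fixed unit of $\Z$ and $k$ the least integer making the valuation constraint hold; one then sets $\tau_i(\bar x) = \sigma_i(\bar x) + \lambda_i \cdot u \cdot p^{k(\bar x) n_i}$, where $k(\bar x)$ is $A$-definable from $v(a_i(\bar x))$ and $v(b_i(\bar x))$ via Fact \ref{valuation-is-definable}. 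Having defined the $\tau_i$'s, I would fix the enumeration $1,\ldots,N$ and set
\[
f(\bar x) = \tau_{i^*(\bar x)}(\bar x), \qquad i^*(\bar x) = \min\{i : \bar x \in D_i\},
\]
extending $f$ by $0$ outside $\bigcup_i D_i$. Since the sets $D_i$ are $A$-definable and the $\tau_i$'s are $A$-definable, so is $f$, and $K \models \phi(\bar x, f(\bar x))$ whenever $K \models \exists y\,\phi(\bar x, y)$.

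The main obstacle is the construction of the canonical section $\tau_i$ in the fat case: one must definably pick a single element of an $n_i$-th power coset lying in a prescribed valuation interval, and do so uniformly in $\bar x$. The point that makes this possible is that the valuation $v$ is $L_r$-definable (Fact \ref{valuation-is-definable}) and the least integer $k$ with $\lambda_i p^{k n_i}$ in the required valuation range is computable from finitely many valuation comparisons; combined with the fact that the $P_n$-coset of $\lambda_i p^{kn_i}$ is forced to be that of $\lambda_i$ itself, this furnishes the required witness. Once this ingredient is in place, the gluing by minimal index is routine.
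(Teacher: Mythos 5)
Your overall strategy (cell decomposition of $\{(\bar x,y):\phi(\bar x,y)\}$, a definable section over each cell base, gluing by least index) is legitimate and not circular, but it is not the paper's route: the paper gives no proof at all, citing van den Dries, whose argument goes through a general criterion for definable Skolem functions (producing witnesses in $\dcl$ of substructures, using Hensel's lemma) rather than cell decomposition. The problem is that your proof has a genuine gap exactly at the step that carries all the content, namely the ``canonical witness'' on a fat cell. You set $\tau_i(\bar x)=\sigma_i(\bar x)+\lambda_i\,u\,p^{k(\bar x)n_i}$ with $k(\bar x)$ ``the least integer making the valuation constraint hold''. But $k(\bar x)$ is an element of the value group $\Gamma_K$, and for an arbitrary $K\succ\Q$ it will in general be a nonstandard element, so $p^{k(\bar x)n_i}$ is not even an element of $K$. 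Even over $\Q$ the assignment $\gamma\mapsto p^{\gamma}$ is not definable in $L_r$: its image $p^{\mathbb Z}$ (or $p^{n_i\mathbb Z}$) would be an infinite definable subset of the field whose elements have pairwise distinct valuations, while by Fact \ref{alg-dimension and topological dimension} (iv) any infinite definable subset of $K$ has dimension $1$, hence nonempty interior, hence contains distinct points of equal valuation. Fact \ref{valuation-is-definable} only makes valuation \emph{comparisons} between polynomial values definable; it provides no definable map from $\Gamma_K$ back into the field. So the displayed formula does not define a function, and ``definably pick a single element of a $P_{n_i}$-coset in a prescribed valuation interval'' is precisely the assertion to be proved, not a routine computation.

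The gap is repairable along your lines, but the witness must be obtained by perturbing a field element you already definably possess, not by exponentiating $p$ by a value-group element. For instance, when the lower bound is present, let $j$ range over the finitely many standard shifts $0,\dots,n_i-1$ (shifted by one if the inequality is strict) and take the least $j$ with $v(a_i(\bar x))+j\equiv v(\lambda_i)\bmod n_i$; put $w_0=p^{j}a_i(\bar x)$, and note that nonemptiness of the fibre forces $v(w_0)$ to satisfy the upper constraint as well. Then correct the coset by a unit: fix a standard $N$ with $1+p^{N}R\sq P_{n_i}$ ($R$ the valuation ring, using Hensel's Lemma); since $v(\lambda_i w_0^{-1})\in n_i\Gamma_K$, the coset $\lambda_i w_0^{-1}P_{n_i}$ contains a unit, hence contains one of the finitely many standard integers $m\in\{1,\dots,p^{N}\}$ prime to $p$, and one may take the least such $m$. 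Setting $\tau_i(\bar x)=\sigma_i(\bar x)+m\,p^{j}a_i(\bar x)$ gives a genuinely $A$-definable section (the cases with only an upper bound, using $b_i$, or with no valuation bounds, using $\lambda_i$ itself, are analogous), after which your gluing argument goes through. As written, however, the key selection step fails.
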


With the above Fact, we could refine Corollary \ref{finite-partition-finite-to-one} as follows:
\begin{Cor}\label{1-1-projection}
Let $X\sq K^n$ be definable with $\dim(X)=k$. Then there exists a  partition of $X$ into finitely many $K$-definable subsets $S$ such that whenever $\dim(S)=\dim(X)$, there is a  projection $\pi_S: S\longrightarrow K^k$ on $k$ suitable coordinate axes which is   injective.
\end{Cor}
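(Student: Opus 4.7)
The plan is to start from the partition provided by Corollary \ref{finite-partition-finite-to-one} and then subdivide each piece on which the selected projection is finite-to-one into finitely many pieces on which the projection becomes injective. The key tool is the existence of definable Skolem functions for $\Th(\Q)$ (Fact \ref{Skolem-functions}).

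More concretely, first I would apply Corollary \ref{finite-partition-finite-to-one} to obtain a partition $\{S_0,\dots,S_{t-1},X_t\}$ of $X$ with $\dim(X_t)<k$, and with each $S_i$ ($i<t$) of dimension $k$ carrying a coordinate projection $\pi_i:S_i\to K^k$ that is finite-to-one. By Fact \ref{alg-dimension and topological dimension} (iii), for each $i<t$ there is a uniform bound $r_i\in\N$ such that $|\pi_i^{-1}(y)\cap S_i|\leq r_i$ for every $y\in\pi_i(S_i)$.

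Fix $i<t$. The formula $\phi(y,\bar x)\equiv\bar x\in S_i\wedge\pi_i(\bar x)=y$ satisfies $K\models\forall y\in\pi_i(S_i)\,\exists\bar x\,\phi(y,\bar x)$, so Fact \ref{Skolem-functions} yields a definable section $g_i^{(1)}:\pi_i(S_i)\to S_i$ of $\pi_i$. Setting $S_i^{(1)}=\{\bar x\in S_i\mid g_i^{(1)}(\pi_i(\bar x))=\bar x\}$, the projection $\pi_i$ is manifestly injective on $S_i^{(1)}$, and every fibre of $\pi_i$ restricted to $S_i\setminus S_i^{(1)}$ has been reduced in size by exactly one. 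Iterating this Skolem construction on $S_i\setminus(S_i^{(1)}\cup\cdots\cup S_i^{(j)})$ at most $r_i$ times exhausts $S_i$ and produces a finite definable partition $S_i=S_i^{(1)}\cup\cdots\cup S_i^{(r_i)}$ on each piece of which $\pi_i$ is injective.

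I would then take as final refinement $\{S_i^{(j)}\mid i<t,\ 1\leq j\leq r_i\}\cup\{X_t\}$. For any piece $S_i^{(j)}$ with $\dim(S_i^{(j)})=k=\dim(X)$, the projection $\pi_i$ restricted to $S_i^{(j)}$ is injective, which is what the statement demands; pieces of strictly smaller dimension (including $X_t$ and any $S_i^{(j)}$ that happens to have $\dim<k$) impose no requirement. I do not anticipate a serious obstacle: the only points to verify are that each $S_i^{(j)}$ is definable (immediate from the displayed formula using $g_i^{(j)}$ and $\pi_i$) and that the iteration terminates, which is guaranteed by the uniform fibre bound $r_i$.
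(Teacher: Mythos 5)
Your proposal is correct and takes essentially the same route as the paper: reduce via Corollary \ref{finite-partition-finite-to-one} to a finite-to-one coordinate projection with a uniform fibre bound, use the definable Skolem functions of Fact \ref{Skolem-functions} to produce a definable section of the projection whose image (your fixed-point set $S_i^{(1)}$) is a definable piece on which the projection is injective, and then remove it and repeat until the bound is exhausted. The paper organizes this as an induction on the fibre bound $r$ and writes the injective piece as $S_0=f(\pi(X_0))$, which is the same set as your $\{\bar x\mid g_i^{(1)}(\pi_i(\bar x))=\bar x\}$.
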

\begin{proof}
Let $X_0=X$. By Corollary \ref{finite-partition-finite-to-one}, we may assume that the projection $\pi:X_0 \longrightarrow K^k$  given by $(x_1,...,x_n)\mapsto (x_1,...,x_k)$ is finite-to-one. By compactness, there is $r\in \mathbb N$ such that
\[
|\pi^{-1}(\bar y)\cap X_0|\leq r
\]
 for all $\bar y\in K^k$.
Induction on $r$. If $r=1$, then $\pi$ is injective on $X_0$. Otherwise, by Fact \ref{Skolem-functions}, there is a definable function
\[
f:\pi(X)\longrightarrow X
\]
such that $\pi(f(\bar y))=x$ for all $\bar y\in \pi(X_0)$. It is easy to see that $f$ is injective and hence, by Corollary \ref{dim(X)=dim(f(X))},  $S_0=f(\pi(X_0))$ is a definable subset of $X$ of dimension $k$. Moreover $\pi:S_0\longrightarrow K^k$ is exactly the inverse of $f$, hence injective. If $\dim(X_0\backslash S_0)$ then the partition $\{X_0\backslash S_0, S_1\}$ satisfies our require. Otherwise, $X_1=X_0\backslash S_0$ has dimension $k$ and
\[
|\pi^{-1}(\bar y)\cap X_1|\leq r-1
\]
By our induction hypothesis, there is a partition of $X_1$ into finitely may definable subsets  meets our requirements. This completes the proof.
\end{proof}

\begin{Thm}
Let $B\sq K^m$ be definable in $K$. Then $\dim_K(B)\geq \dim_{\Q}(B\cap \Q^m)$.
\end{Thm}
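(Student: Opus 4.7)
The plan is to reduce the theorem to a \textbf{key lemma}: \emph{if $A \sq K^n$ is $K$-definable and $A \cap \Q^n$ has nonempty interior in $\Q^n$, then $A$ has nonempty interior in $K^n$.} Granting this, set $k = \dim_{\Q}(B \cap \Q^m)$ (assume $k \geq 1$) and fix a projection $\pi : K^m \to K^k$ onto suitable coordinates such that $\pi(B \cap \Q^m)$ contains a $\Q$-ball $U$. Then $\pi(B) \sq K^k$ is $K$-definable and $\pi(B) \cap \Q^k \supseteq U$ has nonempty interior in $\Q^k$; the key lemma gives $\pi(B)$ nonempty interior in $K^k$, so $\dim_K(\pi(B)) = k$, and Lemma \ref{dim(X)>dim(f(X))} yields $\dim_K(B) \geq \dim_K(\pi(B)) = k$.

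I would prove the key lemma by induction on $n$. The base case $n = 1$ is immediate: $A \cap \Q$ having nonempty interior forces $A$ to be infinite, so $\dim_K(A) = 1$ by Fact \ref{alg-dimension and topological dimension} (iv), meaning $A$ has nonempty interior in $K$. For $n \geq 2$, write a $\Q$-ball $U \sq A \cap \Q^n$ as $U_1 \times \cdots \times U_n$ and consider the $K$-definable set $W = \{\bar y \in K^{n-1} : A_{\bar y} \text{ has nonempty interior in } K\}$, where $A_{\bar y}$ denotes the fiber over $\bar y$. For $\bar y \in U_1 \times \cdots \times U_{n-1}$ we have $U_n \sq A_{\bar y}$, so by the base case $\bar y \in W$; hence $W \cap \Q^{n-1}$ has nonempty interior in $\Q^{n-1}$, and the inductive hypothesis yields a $K$-ball $B_1 \sq W$.

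Using definable Skolem (Fact \ref{Skolem-functions}), pick $K$-definable $c : W \to K$ and $\epsilon : W \to K\backslash\{0\}$ with $\{z \in K : v(z - c(\bar y)) > v(\epsilon(\bar y))\} \sq A_{\bar y}$ for every $\bar y \in W$. Apply the continuity lemma (the last Lemma of Section 2.1) iteratively to $c$ and then to $\epsilon$ restricted to the continuity domain of $c$, to obtain a $K$-open set $B' \sq W$ on which both $c$ and $\epsilon$ are continuous and $W\backslash B'$ has empty interior in $K^{n-1}$ (a ball in the union of the two ``bad'' pieces would, being open, have to avoid the intermediate open continuity locus of $c$ and hence lie entirely in a single no-interior piece, a contradiction). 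Since $B_1$ is a $K$-ball, $B_1 \cap B'$ is a nonempty open subset containing some $K$-ball $B_1''$.

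The main obstacle is the final construction of an actual $K^n$-ball in $A$. Fix $\bar y_0 \in B_1''$ and set $M_0 = v(\epsilon(\bar y_0)) + 1$. By continuity of $c$ and local constancy of $v \circ \epsilon$ (valid since $\epsilon$ is continuous and nonzero on $B_1''$), there is a $K$-ball $B_1''' \sq B_1''$ around $\bar y_0$ on which $v(c(\bar y) - c(\bar y_0)) > M_0$ and $v(\epsilon(\bar y)) = M_0 - 1$. Then for $(\bar y, z) \in B_1''' \times \{z : v(z - c(\bar y_0)) > M_0\}$, the ultrametric inequality gives $v(z - c(\bar y)) > M_0 > v(\epsilon(\bar y))$, so $z \in A_{\bar y}$ and $(\bar y, z) \in A$. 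This exhibits a $K^n$-ball inside $A$, completing the induction and hence the theorem.
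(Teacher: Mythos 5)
Your argument is correct, but it takes a genuinely different route from the paper. The paper goes ``downward'': having already built Corollary \ref{1-1-projection} (a finite partition of $B$ on whose full-dimensional pieces some coordinate projection to $K^{\dim_K(B)}$ is injective), it restricts such an injective projection to $B\cap\Q^m$ and invokes the finite-to-one characterization of dimension (Lemma \ref{finite-to-one-dim(Y)=dim(X)}) inside $\Q$, with Delon's theorem (Fact \ref{Every-type-over-QP-is-definable}) supplying the $\Q$-definability of $B\cap\Q^m$ needed to apply that characterization; given the earlier machinery the proof is three lines. You go ``upward'': you isolate the transfer statement that a $K$-definable $A\sq K^n$ whose trace $A\cap\Q^n$ has interior in $\Q^n$ must itself have interior in $K^n$, and then conclude via a coordinate projection witnessing $\dim_\Q(B\cap\Q^m)$ together with Lemma \ref{dim(X)>dim(f(X))}. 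Your induction for the key lemma is sound: the definability of $W$ should be justified via Fact \ref{alg-dimension and topological dimension} (iii) and (iv), the Skolem-function choice of $(c,\epsilon)$ (Fact \ref{Skolem-functions}, applied twice and made total outside $W$) and the two applications of the generic continuity lemma are fine, and the final ultrametric computation does produce an open box inside $A$. What your route buys is independence from Corollary \ref{1-1-projection} and from the $\Q$-definability of the trace, at the cost of redoing fiberwise work; note, however, that your key lemma follows in a few lines from Fact \ref{O-P-partition-I}: take a $\Q$-ball $U\sq A\cap\Q^n$ and apply the fact to $X=U(K)$ and $Y=A\cap U(K)$; if $U(K)\backslash A$ contained a nonempty $\Q$-definable open set, that set would contain a point of $\Q^n$, hence a point of $U\backslash A$, a contradiction, so $A$ contains a $\Q$-definable open subset of $K^n$. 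So the same transfer statement could have been obtained without the induction, shortening your argument considerably.
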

\begin{proof}
Suppose that $\dim_K(B)=k$. By Lemma \ref{dim(X,Y)=max{dim(X),dim(Y)}} and  Corollary \ref{1-1-projection}, we may assume that $\pi: B\longrightarrow K^k$  is injective. The restriction of $\pi$ to $B\cap \Q^m$ is a injective projection from $B\cap \Q^m$ to $\Q^k$. By Lemma \ref{finite-to-one-dim(Y)=dim(X)}, $\dim_{\Q}(B\cap \Q^m)\leq k$.
\end{proof}

Note that $P_n(K)=\{a\in K|\ a\neq 0\wedge \exists b\in K(a=b^n)\}$ is an open subset of $K$ whenever $K$ is a hensilian field. For any polynomial $f(x_1,...,x_m)\in K[x_1,...,x_m]$,
\[
P_n(f(K^m))=\{a\in K^m|\ f(a)\neq 0\wedge \exists b\in K(f(a)=b^n)\}
\]
is an open subset of $K^m$ since $f$ is continuous.

\subsection{Standard Part Map and Definable Functions}
The following Facts will be used later.

\begin{Fact}\cite{DELON}\label{Every-type-over-QP-is-definable}
Every complete $n$-type over $\Q$ is definable. Equivalently, for any $K\succ \Q$, any $L_r$-formula $\phi(x_1,...,x_n, y_1,...,y_m)$, and any $\bar b\in K^{m}$, the set
\[
\{\bar a\in \Q^{n}|\ K\models \phi(\bar a,\bar b)\}
\]
is definable in $\Q$.
\end{Fact}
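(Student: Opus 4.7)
The plan is to invoke Macintyre's quantifier elimination to reduce to atomic formulas, then handle each type case by case. By Macintyre's theorem, $\phi(\bar x, \bar y)$ is equivalent in $\Th(\Q)$ to a quantifier-free formula in the language $L = L_r \cup \{P_n : n \in \N^+\}$, hence to a Boolean combination of polynomial equations $f(\bar x, \bar y) = 0$ and $n$-th power conditions $P_n(f(\bar x, \bar y))$ with $f \in \z[\bar x, \bar y]$. Because definability of the trace on $\Q^n$ is preserved under Boolean combinations, it suffices to fix $\bar b \in K^m$ and handle each atomic case separately.

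For a polynomial equation $f(\bar x, \bar b) = 0$, write $f(\bar x, \bar b) = \sum_\alpha d_\alpha \bar x^\alpha$ with $d_\alpha \in K$. Choose a $\Q$-basis $e_1, \ldots, e_s$ of the finite-dimensional $\Q$-linear span of $\{d_\alpha\}$ inside $K$, and decompose $d_\alpha = \sum_{i=1}^s q_{\alpha, i}\, e_i$ with $q_{\alpha, i} \in \Q$. For any $\bar a \in \Q^n$ one has
\[
f(\bar a, \bar b) = \sum_{i=1}^s e_i \bigg(\sum_\alpha q_{\alpha, i}\, \bar a^\alpha\bigg),
\]
with each inner sum in $\Q$. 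The $\Q$-linear independence of the $e_i$ in $K$ forces $f(\bar a, \bar b) = 0$ to be equivalent to the simultaneous vanishing of the $s$ polynomials $\sum_\alpha q_{\alpha, i}\, \bar a^\alpha \in \Q[\bar a]$, which is plainly a $\Q$-definable condition on $\bar a$.

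The main obstacle is the $n$-th power predicate: one must determine, for each $\bar a \in \Q^n$, whether $f(\bar a, \bar b)$ lies in $(K^*)^n$. The key inputs are that $K^*/(K^*)^n$ is finite and, by elementarity together with Hensel's Lemma, admits a complete set of coset representatives drawn from $\Q^*$; and that membership of $z \in K^*$ in $(K^*)^n$ is detected by divisibility of $v(z)$ by $n$ in $\Gamma_K$ combined with a residue congruence modulo an appropriate power of $p$. Applying the same $\Q$-basis decomposition as above to the value $f(\bar a, \bar b)$, together with Fact \ref{valuation-is-definable} for the valuation comparisons, these conditions translate into a $\Q$-definable condition on $\bar a$, using parameters drawn from $\Q$: namely the chosen coset representatives together with the finitely many rational data $q_{\alpha, i}$ encoding how $\Q$-linear combinations over $e_1, \ldots, e_s$ fall into cosets of $(K^*)^n$. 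The delicate step is precisely this last translation, since controlling the multiplicative interaction between the fixed $e_i \in K$ and the varying coefficients in $\Q$ requires a careful case analysis that distinguishes the situations where one or several of the inner sums in the decomposition of $f(\bar a, \bar b)$ vanish.
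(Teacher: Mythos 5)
The paper does not prove this statement at all: it is quoted as a Fact from Delon's theorem on definability of types over $\Q$, so your proposal has to stand on its own as a complete proof of that theorem, and it does not. The reduction via Macintyre's quantifier elimination and the treatment of the equational atoms $f(\bar x,\bar b)=0$ (decomposing the coefficients over a $\Q$-basis $e_1,\dots,e_s$ of their span) are correct. But the entire content of the statement sits in the $P_n$-atoms, and there your argument is circular. After the linear decomposition, what must be shown is that the set of tuples $(q_1,\dots,q_s)\in\Q^s$ with $\sum_i q_ie_i\in P_n(K)$ is definable in $\Q$; this is itself an instance of the Fact being proved (for the formula $P_n(\sum_i x_iy_i)$ with parameters $e_1,\dots,e_s\in K$). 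The ingredients you cite do not bridge the gap: finiteness of $K^*/(K^*)^n$ with representatives in $\Q^*$ and the Hensel criterion ($n\mid v(z)$ in $\Gamma_K$ plus a congruence condition on a unit part) only characterize membership in $P_n(K)$ \emph{inside} $K$, and Fact~\ref{valuation-is-definable} only gives $K$-definability of valuation comparisons; none of this says that the trace on $\Q^s$ of such conditions is definable in $\Q$, which is exactly the point at issue. The ``finitely many rational data encoding how $\Q$-linear combinations of $e_1,\dots,e_s$ fall into cosets of $(K^*)^n$'' is precisely the definition schema whose existence is the theorem; asserting that it exists begs the question. Even the simplest instance, $\{q\in\Q:\ q+t\in P_n(K)\}$ for a single element $t\in K\setminus\Q$, is not settled by anything in your sketch.

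There is also a structural reason the sketch cannot be completed as written: $\Th(\Q)$ is unstable, so there exist models $M\models\Th(\Q)$ and elementary extensions with non-definable traces over $M$. Hence any correct proof must use a property singling out $\Q$ among models of its theory, namely completeness/local compactness (equivalently, the Dedekind completeness of the value group $\z$ and the convergence of nested balls, which is what makes $V$, $\mu$ and $\st$ behave as described in the introduction). Everything you invoke --- quantifier elimination, Hensel's Lemma, finiteness of $K^*/(K^*)^n$ with representatives in the small field, Fact~\ref{valuation-is-definable}, and the linear-algebra decomposition --- holds verbatim with an arbitrary model in place of $\Q$, so an argument built only from these tools would prove a false statement. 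A repair along your lines would have to analyze the type of $(e_1,\dots,e_s)$ over $\Q$, e.g.\ via standard parts and the triviality of cuts of $\Gamma_K$-elements over $\z$, and control how the valuations and angular components of the dominant terms of $\sum_i q_ie_i$ vary with $(q_1,\dots,q_s)$; carrying this out in full generality is, in effect, Delon's proof, and it is missing from the proposal.
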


\begin{Fact}\label{O-P-partition-I}\cite{O-P}
Let $X\sq K^m$ be a $\Q$-definable open set, let $Y\sq X$ be a $K$-definable subset of $X$. Then either $Y$ or $X\backslash Y$ contains a $\Q$-definable open set.
\end{Fact}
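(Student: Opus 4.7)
The plan is to combine Fact~\ref{Every-type-over-QP-is-definable} with $p$-adic cell decomposition to reduce the problem to a dimension dichotomy inside $\Q^m$, and then lift the resulting $\Q$-interior back to a $\Q$-definable open set in $K^m$. Write $X=\psi(K^m)$ with $\psi$ an $L_{\Q}$-formula defining an open set, and $Y=X\cap\phi(K^m,\bar b)$ with $\phi\in L_r$ and $\bar b\in K^n$. By elementarity $X_0:=\psi(\Q^m)$ is open in $\Q^m$, so $\dimm_{\Q}(X_0)=m$; by Fact~\ref{Every-type-over-QP-is-definable} the trace $Y_0:=Y\cap\Q^m$ is $\Q$-definable. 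For $\bar c$ and $k$ write $B(\bar c,k):=\{\bar z:\bigwedge_i v(z_i-c_i)\geq k\}$, interpreted in whichever ambient space is relevant.

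From $X_0=Y_0\cup(X_0\backslash Y_0)$ and Lemma~\ref{dim(X,Y)=max{dim(X),dim(Y)}}, one of these two pieces has $\Q$-dimension $m$; by $p$-adic cell decomposition that piece has nonempty interior in $\Q^m$. Swapping $Y$ with $X\backslash Y$ if necessary, we obtain $\bar a\in\Q^m$ and $n\in\z$ with $B(\bar a,n)\cap\Q^m\sq Y_0$. Applying Fact~\ref{Every-type-over-QP-is-definable} to the $L_r$-formula $\forall\bar z\,[v(\bar z-\bar c)\geq k\to\phi(\bar z,\bar b)]$ shows that
\[
W=\{(\bar c,k)\in\Q^m\times\z: B(\bar c,k)\sq Y\}\ \ \text{and}\ \ W'=\{(\bar c,k)\in\Q^m\times\z: B(\bar c,k)\sq X\backslash Y\}
\]
are $\Q$-definable, and any element of $W$ (resp.\ $W'$) furnishes the desired $\Q$-definable open ball in $Y$ (resp.\ in $X\backslash Y$). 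The task therefore reduces to showing $W\cup W'\neq\emptyset$.

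The main obstacle is this last step: the $K$-ball $B(\bar a,n)$ extending the witness $B(\bar a,n)\cap\Q^m\sq Y_0$ need not itself lie inside $Y$, since $\phi(\cdot,\bar b)$ can behave wildly on $K^m\backslash\Q^m$. I would attack it by contradiction, assuming $W=W'=\emptyset$ and passing to a sufficiently saturated $\K\succ K$. Emptiness of $W$ makes the partial type $\{v(\bar x-\bar c)\geq k:k\in\z\}\cup\{\bar x\in X\backslash Y\}$ consistent for every $\bar c\in X_0$, producing witnesses $\bar x^*_{\bar c}\in(X\backslash Y)(\K)$ with $\st(\bar x^*_{\bar c})=\bar c$; by symmetry, $W'=\emptyset$ yields analogous witnesses in $Y(\K)$. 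The hard step, which I expect to be the crux of the proof, is converting these infinitesimal witnesses into a genuine contradiction; the route I have in mind is to apply Denef's cell decomposition in $\K$ to $Y(\K)$ and $(X\backslash Y)(\K)$, observe that any top-dimensional cell whose centre lies in the convex hull of $\Q$ inside $\K^m$ has standard part a $\Q$-definable ball contained in the corresponding set, and use the above infinitesimal witnesses (together with the $\Q$-dimension $m$ of $Y_0$) to force such a cell to exist.
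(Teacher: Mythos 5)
The paper itself offers no proof of this statement—it is quoted directly from Onshuus--Pillay \cite{O-P}—so your attempt has to stand on its own, and as written it does not. The preparatory reduction is fine (Fact~\ref{Every-type-over-QP-is-definable} does make $Y\cap\Q^m$, $W$ and $W'$ $\Q$-definable, the dimension dichotomy inside $X\cap\Q^m$ is correct, and the saturation argument from $W=W'=\emptyset$ does produce points of $Y(\K)$ and of $(X\backslash Y)(\K)$ infinitesimally close to every point of $X\cap\Q^m$), and you correctly identify the crux: knowing the $\Q$-points, or the standard parts, of a set says nothing automatic about whether it contains the $K$-points of a whole $\Q$-ball. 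But the step meant to resolve this crux is exactly where the argument stops being a proof. The ``observation'' that any top-dimensional Denef cell whose centre lies in the convex hull has standard part a $\Q$-definable ball contained in the corresponding set is false as stated: the cell $\{x\in\K\mid v(x-c)\geq\gamma\}$ with $c\in V$ and $\gamma$ above every standard integer is top-dimensional with centre in $V$, yet its standard part is the single point $\st(c)$; and even for a cell whose standard part does contain a ball $B$ (e.g.\ $\{x\mid 0\leq v(x-c)\leq\gamma\}$ with $\gamma$ nonstandard), the cell need not contain $B(\K)$, so producing \emph{some} $\Q$-ball whose $\K$-points lie inside the cell requires a genuine analysis of the nonstandard endpoints, of the coset conditions $x-c\in\lambda P_n$, and, for $m>1$, an induction over fibred cells. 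That analysis is the entire content of the fact; what you call an observation is a restatement of the difficulty, and the unproved step is essentially equivalent to the companion fact, Fact~\ref{O-P-st-image-has-no-interior}, which the paper likewise quotes from \cite{O-P} without proof.

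It is worth noting that if the other quoted facts are available to you, your own set-up closes quickly and without saturation: $X\cap\Q^m\sq\st(Y\cap V^m)\cup\st((X\backslash Y)\cap V^m)$, both images are $\Q$-definable by Fact~\ref{St-part-image-definable}, so one of them has interior inside $X\cap\Q^m$; by Fact~\ref{O-P-st-image-has-no-interior} the $\Q$-definable set $\st(Y\cap V^m)\cap\st((K^m\backslash Y)\cap V^m)$ has no interior, so after discarding it (and shrinking so the ball and its $K$-points stay inside $X$) one finds a ball $B$ with, say, $B\sq\st(Y\cap V^m)$ and $B\cap\st((K^m\backslash Y)\cap V^m)=\emptyset$; then $B(K)\sq Y$, because any $\bar x\in B(K)$ lies in $V^m$ with $\st(\bar x)\in B$, and $\bar x\notin Y$ would place $\st(\bar x)$ in the excluded intersection, with the symmetric argument in the other case. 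Either import such an ingredient explicitly or carry out the cell-by-cell analysis in full; at present the decisive step is a gap.
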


\begin{Fact}\label{O-P-st-image-has-no-interior}\cite{O-P}
Let $X\sq K^m$ be a $K$-definable set. Then $\st(X)\cap\st(K^m\backslash X)$ has no interior.
\end{Fact}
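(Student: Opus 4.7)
The plan is to argue by contradiction, using Fact \ref{O-P-partition-I} to reduce from a general $K$-definable set $X$ to a $\Q$-definable open piece, and then to exploit the elementarity $\Q\prec K$ to collide with the hypothesis that both $X$ and its complement meet every $\st$-fiber over our alleged interior point.

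Suppose, towards a contradiction, that $\st(X\cap V^m)\cap\st((K^m\setminus X)\cap V^m)$ has nonempty interior in $\Q^m$. Then it contains a basic open ball $B=\{a\in\Q^m:\bigwedge_{i=1}^{m} v(a_i-c_i)>n\}$ for some $c\in\Q^m$ and some $n\in\z$, and its $K$-extension $B(K)$ is a $\Q$-definable open subset of $K^m$. Applying Fact \ref{O-P-partition-I} to the $\Q$-definable open set $B(K)$ and to its $K$-definable subset $X\cap B(K)$, either $X\cap B(K)$ or $B(K)\setminus X$ contains a nonempty $\Q$-definable open set. The hypothesis is symmetric in $X$ and $K^m\setminus X$, so I may assume the first case; write the witnessing $\Q$-definable open set as $U=U_0(K)$ with $U_0\sq\Q^m$. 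By $\Q\prec K$, $U_0$ is nonempty and open, and $U_0\sq B$.

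Now pick any $a\in U_0$. Openness in $\Q^m$ supplies some $N\in\z$ with $\{z\in\Q^m:\bigwedge_i v(z_i-a_i)>N\}\sq U_0$; by elementarity the same containment holds with $\Q$ replaced by $K$, so that
\[
\{y\in K^m:\bigwedge_i v(y_i-a_i)>N\}\sq U_0(K)=U\sq X.
\]
On the other hand, $a\in U_0\sq B\sq\st((K^m\setminus X)\cap V^m)$ furnishes some $y\in V^m\setminus X$ with $\st(y)=a$. By definition of $\st$, each $v(y_i-a_i)$ exceeds every standard integer, in particular is $>N$, so $y\in U\sq X$, contradicting $y\notin X$.

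The one point that needs care is this transfer step from $U_0$ to $U_0(K)$: one must verify that a $\Q$-rational basic ball contained in $U_0$ remains contained in $U_0(K)$ after passing to $K$, which follows from elementarity once the ball is written as a first-order formula over $\Q$. Everything else is bookkeeping; the essential content is Fact \ref{O-P-partition-I}, which isolates the ``$\Q$-definable open'' core of a $K$-definable piece inside a $\Q$-definable open set.
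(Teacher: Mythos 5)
Your argument is correct. Note, though, that the paper does not prove this statement at all: it is quoted as a Fact, with the proof deferred to Onshuus--Pillay \cite{O-P}, just like Fact \ref{O-P-partition-I}. What you have done is show that this second fact is actually a consequence of the first one inside the paper's own framework: assuming a ball $B\sq\st(X\cap V^m)\cap\st((K^m\setminus X)\cap V^m)$, you apply Fact \ref{O-P-partition-I} to $B(K)$ to extract a nonempty $\Q$-definable open $U=U_0(K)$ inside (say) $X\cap B(K)$, and then any $a\in U_0$ lies in $B$, hence is the standard part of some $y\in(K^m\setminus X)\cap V^m$; since $y$ is infinitesimally close to $a$ it lands in the transferred ball inside $U\sq X$, a contradiction, with the other case handled by the symmetry of the hypothesis in $X$ and $K^m\setminus X$. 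All the steps check out: $U_0\sq B$ because $U\sq B(K)$ and $B(K)\cap\Q^m=B$; nonemptiness and openness of $U_0$ in $\Q^m$ follow from $\Q\prec K$ (using definability of the valuation, Fact \ref{valuation-is-definable}); and the transfer of the containment of the ball of radius $N$ around $a$ from $\Q$ to $K$ is a first-order statement over $\Q$. The only implicit reading you rely on is that the $\Q$-definable open set produced by Fact \ref{O-P-partition-I} is nonempty, which is clearly the intended content of that fact (and is how the paper itself uses it in Proposition \ref{Prop-no-interior}). So your proof is a legitimate, more self-contained alternative to the paper's bare citation, at the modest price of leaning on Fact \ref{O-P-partition-I}, which itself is still taken on faith from \cite{O-P}.
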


Recall that $\mu$ is the collection of all infinitesimals of $K$ over $\Q$, which induces a equivalence relation $\backsim_\mu$ on $K$, which is defined by
\[
a \backsim_\mu b\ \iff \ a-b\in \mu.
\]

\begin{Def}
Let $f(\bar x,y),g(\bar x,y)\in K[\bar x,y]$ be polynomials. By $f \backsim_\mu g$ we mean that
\begin{enumerate}
  \item [(i)] if $|\bar x|=0$, $f(y)=\sum_{i=1}^na_iy^i$, and $g(y)=\sum_{i=1}^nb_iy^i$, then $f \backsim_\mu g$ iff $a_i \backsim_\mu b_i$ for each $i\leq n$.
  \item  [(ii)]  if $|\bar x|>0$, $f(\bar x,y)=\sum_{i=1}^na_i(\bar x)y^i$, and $g(y)=\sum_{i=1}^nb_i(\bar x)y^i$,  where $a$'s and $b$'s are polynomials with variables from $\bar x$. Then $f \backsim_\mu g$ iff $a_i \backsim_\mu b_i$ for each $i\leq n$.
\end{enumerate}
\end{Def}

\begin{Lemma}\label{mu-equi-on-poly}
Let $\bar x=(x_1,...,x_n)$, $f(\bar x)$ and $g(\bar x)$ be polynomials over $K$ with $f \backsim_\mu g$. If $\bar a=(a_1,...,a_n)$ and $\bar b=(b_1,...,b_n)$ are tuples from $K$ with $a_i \backsim_\mu b_i$ for each $i\leq n$. Then $f(\bar a) \backsim_\mu g(\bar b)$.
\end{Lemma}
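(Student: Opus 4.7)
The plan is a direct algebraic computation exploiting two structural facts: $V$ is a subring of $K$, and $\mu$ is an ideal of $V$ (so $V \cdot \mu \sq \mu$). I would first record these — they follow immediately from the ultrametric inequality together with the definitions of $V$ and $\mu$ — and observe that the lemma must be read with the implicit assumption that the coefficients of $f, g$ and the entries of $\bar a, \bar b$ all lie in $V$; this will hold in the convex-hull context where the lemma is applied. Without the boundedness the conclusion can fail: take $f = g = x^2$, $a = 1/\epsilon$, $b = a + \epsilon$ for $\epsilon \in \mu \setminus \{0\}$, and compute $f(a) - g(b) = -2 - \epsilon^2 \notin \mu$.

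Next I would split
\[ f(\bar a) - g(\bar b) = \bigl(f(\bar a) - f(\bar b)\bigr) + \bigl(f(\bar b) - g(\bar b)\bigr) \]
and handle each piece separately. For $f(\bar b) - g(\bar b)$, writing $f - g = \sum_{\bar k}(c_{\bar k} - d_{\bar k})\bar x^{\bar k}$, the hypothesis $f \backsim_\mu g$ gives each $c_{\bar k} - d_{\bar k} \in \mu$, while each monomial $\bar b^{\bar k}$ lies in $V$; hence every summand $(c_{\bar k} - d_{\bar k})\bar b^{\bar k}$ is in $\mu$, and so is their sum.

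For $f(\bar a) - f(\bar b)$, by linearity it suffices to treat a single monomial $c\,\bar x^{\bar k}$. Here I would apply the telescoping identity
\[ \prod_{i=1}^n a_i^{k_i} - \prod_{i=1}^n b_i^{k_i} = \sum_{j=1}^n \Bigl(\prod_{i<j} b_i^{k_i}\Bigr)\bigl(a_j^{k_j} - b_j^{k_j}\bigr)\Bigl(\prod_{i>j} a_i^{k_i}\Bigr), \]
together with the factorization $a_j^{k_j} - b_j^{k_j} = (a_j - b_j)\sum_{\ell=0}^{k_j - 1} a_j^\ell b_j^{k_j - 1 - \ell}$. Since $a_j - b_j \in \mu$ and every other factor lies in $V$, each term of the outer sum lies in $V \cdot \mu = \mu$, whence so does the whole expression. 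The main (and only) obstacle is precisely this book-keeping — verifying that every factor multiplying a $\mu$-element sits in $V$ — which becomes mechanical once the ring/ideal structure of $(V, \mu)$ and the $V$-boundedness of the data are in place.
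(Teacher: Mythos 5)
Your argument is correct, and it is in substance the same idea as the paper's proof --- stability of $\mu$ under the ring operations --- but carried out with a precision the paper omits, and this difference matters. The paper's proof consists of the remark that $\mu$ is closed under addition and multiplication and that polynomials are compositions of these operations; by itself that does not give $f(\bar a)-g(\bar b)\in\mu$, because $\mu$ is an ideal of $V$ and not of $K$, so one must know that every factor multiplying a $\mu$-element stays in $V$. Your counterexample ($f=g=x^2$, $a=1/\epsilon$, $b=a+\epsilon$ with $\epsilon\in\mu\setminus\{0\}$, giving $f(a)-g(b)=-2-\epsilon^2\notin\mu$) shows that the lemma as literally stated, with arbitrary tuples from $K$ and arbitrary coefficients, is false, so the $V$-boundedness you add is not a cosmetic hypothesis but a necessary one; it is also exactly the situation in which the lemma is actually used (for instance in Corollary \ref{st(f)(st(s))=st(f(a))} the coefficients lie in $V$ and the arguments in $V^n$, and the paper itself invokes closure of $V$ under addition and multiplication at that point). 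Your decomposition $f(\bar a)-g(\bar b)=(f(\bar a)-f(\bar b))+(f(\bar b)-g(\bar b))$, with the telescoping identity for products and the factorization of $a_j^{k_j}-b_j^{k_j}$, supplies the book-keeping that turns the paper's one-line sketch into a proof; what it buys is a correct statement and a complete verification, at the cost only of length.
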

\begin{proof}
We see that $\alpha\in \mu$ iff $v(\alpha)>\mathbb Z$. Since $v(\alpha+\beta)\geq \min\{v(\alpha),v(\beta)\}$ and $v(\alpha \beta)=v( \alpha)+v(\beta)$, we see that  $\mu$ is closed under addition and multiplication. As polynomials are functions obtained by compositions of addition and multiplication, we conclude that $f(\bar a) \backsim_\mu g(\bar b)$.
\end{proof}

Since $V\sq K$ is also closed under addition and multiplication. We conclude directly that:
\begin{Cor}\label{st(f)(st(s))=st(f(a))}
Let $\bar x=(x_1,...,x_n)$, and $f(\bar x)\in K[\bar x]$ be a polynomial with every coefficient contained in $V$. If $a=(a_1,...,a_n)\in V^n$, then $\st(f)(\st(a))=\st(f(a))$.
\end{Cor}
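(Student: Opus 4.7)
The plan is to derive the corollary directly from Lemma \ref{mu-equi-on-poly}, using that $V$ is a subring of $K$ and that $\mu$ is the kernel of the standard part map.

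First I would observe that for each coefficient $c_i$ of $f$, which by hypothesis lies in $V$, we have $c_i - \st(c_i) \in \mu$, i.e.\ $c_i \backsim_\mu \st(c_i)$. Viewing $\st(f) \in \Q[\bar x]$ as a polynomial over $K$ via the inclusion $\Q \subset V \subset K$, this gives $f \backsim_\mu \st(f)$ in the sense of the definition preceding Lemma \ref{mu-equi-on-poly}. Similarly, since $a \in V^n$, we have $a_i \backsim_\mu \st(a_i)$ for each $i$.

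Applying Lemma \ref{mu-equi-on-poly} with these choices yields
\[
f(a) \backsim_\mu \st(f)(\st(a)),
\]
that is, $f(a) - \st(f)(\st(a)) \in \mu$. Since $V$ is closed under addition and multiplication, $f(a) \in V$, and since $\st(a) \in \Q^n$ with $\st(f) \in \Q[\bar x]$, the element $\st(f)(\st(a))$ already lies in $\Q$. By the uniqueness of the standard part (any element of $V$ has a unique representative in $\Q$ modulo $\mu$), we conclude $\st(f(a)) = \st(f)(\st(a))$, as required.

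There is no real obstacle here; the work has all been done in Lemma \ref{mu-equi-on-poly}, and the corollary is essentially the observation that the ring homomorphism $\st : V \to \Q$ extends coordinatewise and coefficientwise to polynomial evaluation. The only thing to be careful about is the bookkeeping: $\st(f)$ is defined as a polynomial with coefficients in $\Q$, but must be compared with $f$ as polynomials over $K$, and the argument relies on the fact that the coefficients of $f$ lie in $V$ so that $\st(f)$ makes sense in the first place.
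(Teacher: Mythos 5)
Your proof is correct and follows essentially the same route as the paper: the paper also deduces the corollary directly from Lemma \ref{mu-equi-on-poly} together with the closure of $V$ under addition and multiplication, which is exactly the observation that $\st:V\to\Q$ behaves as a ring homomorphism on polynomial evaluation. Your explicit bookkeeping (viewing $\st(f)$ over $K$, applying the lemma with $g=\st(f)$, $\bar b=\st(a)$, and invoking uniqueness of the standard part) just spells out what the paper leaves implicit.
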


\begin{Cor}\label{f(b)=0-implise-b-in-V}
Let  $f(x)=a_nx^n+...+a_1x^1+a_0$ be a polynomial over $K$ with every coefficient contained in $V$ and $a_n\notin \mu$. If $b\in K$ such that $f(b)=0$. Then $b\in V$.
\end{Cor}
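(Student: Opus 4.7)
The plan is to argue by contradiction: suppose $b \notin V$, and derive that $a_n \in \mu$, contradicting the hypothesis.

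First I would use the equivalence noted in the preliminaries: for $b \in K \setminus \{0\}$, $b \notin V$ if and only if $b^{-1} \in \mu$. Assuming $b \notin V$, the element $b^{-1}$ is an infinitesimal. Since $f(b) = 0$ and $b \neq 0$ (note $b = 0$ would force $a_0 = 0 \in \mu$, which is consistent with $a_0 \in V$ but does not create any issue; in any case $b = 0 \in V$ trivially, so WLOG $b \neq 0$), I can divide the equation $a_n b^n + a_{n-1} b^{n-1} + \cdots + a_0 = 0$ by $b^n$ to obtain
\[
a_n + a_{n-1} b^{-1} + a_{n-2} b^{-2} + \cdots + a_0 b^{-n} = 0.
\]

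The key step is to observe that each of the terms $a_i b^{-(n-i)}$ for $i < n$ lies in $\mu$. This follows because $V \cdot \mu \subseteq \mu$: if $a \in V$ and $c \in \mu$ with $a, c$ nonzero, then $v(a) > m$ for some $m \in \mathbb Z$ while $v(c) > \mathbb Z$, so $v(ac) = v(a) + v(c) > m + n$ for every $n \in \mathbb Z$, hence $ac \in \mu$. (The case where either factor is zero is trivial.) Also $\mu$ is closed under products, so $b^{-k} \in \mu$ for every $k \geq 1$. Thus $a_i b^{-(n-i)} \in \mu$ for $i = 0, 1, \ldots, n-1$.

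Finally, $\mu$ is closed under addition (as noted in the proof of Lemma \ref{mu-equi-on-poly}), so the sum $\sum_{i=0}^{n-1} a_i b^{-(n-i)}$ belongs to $\mu$, whence $a_n = -\sum_{i=0}^{n-1} a_i b^{-(n-i)} \in \mu$. This contradicts the hypothesis $a_n \notin \mu$, forcing $b \in V$. There is no real obstacle here; the argument is entirely routine once one notes that $\mu$ is an ideal of $V$ and that $b \notin V$ is equivalent to $b^{-1}$ being infinitesimal.
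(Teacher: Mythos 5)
Your proof is correct and follows essentially the same route as the paper: assume $b\notin V$, pass to the infinitesimal $b^{-1}$, divide $f(b)=0$ by $b^n$, and conclude $a_n\in\mu$, a contradiction. The only difference is that the paper packages the final step by applying Corollary \ref{st(f)(st(s))=st(f(a))} to the reversed polynomial $g(y)=a_0y^n+\dots+a_n$, while you verify directly that each term $a_ib^{-(n-i)}$ lies in $\mu$ using that $\mu$ is an ideal of $V$; this is just an unpacking of the same argument.
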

\begin{proof}
Suppose for a contradiction that $b\notin V$.  Then $\st(b^{-1})=0$. Clearly, we have
\[
b^{-n}f(b)=a_n+...+a_1b^{-n+1}+a_0b^{-n}=0.
\]
Let
\[g(y)=a_0y^n+...+a_{n-1}y+a_n.\]
Then $g(b^{-1})=0$. By Corollary \ref{st(f)(st(s))=st(f(a))}, we have $\st(g)(\st(b^{-1}))=0$.  As $\st(b^{-1})=0$, we see that $\st(a_n)=0$,  which contradicts to $a_n\notin \mu$.
\end{proof}

\begin{Fact}\label{St-part-image-definable}\cite{O-P}
Let $S\sq K^m$ be definable in $K$. Then $st(S\cap V^m)\sq \Q^m$ is definable in $\Q$.
\end{Fact}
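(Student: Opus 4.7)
The plan is to express $\st(S\cap V^m)$ via $\Q$-definable families by invoking the definability of types over $\Q$ (Fact \ref{Every-type-over-QP-is-definable}). Write $S=\phi(K^m,\bar b)$ with $\bar b\in K^n$, and for each $k\in\N$ set
\[
D_k=\{\bar a\in\Q^m\mid K\models\exists\bar x(\phi(\bar x,\bar b)\wedge\bigwedge_{i=1}^m v(x_i-a_i)\geq k)\}.
\]
By Fact \ref{Every-type-over-QP-is-definable}, each $D_k$ is $\Q$-definable; encoding $k$ through a variable $t\in K$ with $v(t)=k$ makes the family uniformly $\Q$-definable, so $\bigcap_k D_k$ is $\Q$-definable, defined by a formula of shape $\forall t(v(t)\geq 0\to\psi(\bar a,t))$. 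A direct check shows $\bigcap_k D_k=\overline{\st(S\cap V^m)}$ in $\Q^m$: the inclusion $\supseteq$ comes from taking standard parts of approximating witnesses, and $\subseteq$ from the fact that $\st(\bar x)$ is within distance $p^{-k}$ of $\bar a$ whenever $v(x_i-a_i)\geq k$ and $\bar a\in\Q^m$.

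To extract $\st(S\cap V^m)$ itself from its closure, I would observe that any $\bar a\in\overline{\st(S\cap V^m)}\backslash\st(S\cap V^m)$ must lie in $S^c$ (otherwise $\bar a=\st(\bar a)\in\st(S\cap V^m)$), and hence in $\st((K^m\backslash S)\cap V^m)$. Applying the first step to $K^m\backslash S$ shows that $\overline{\st((K^m\backslash S)\cap V^m)}$ is also $\Q$-definable, so the gap lies in the $\Q$-definable set $E:=\overline{\st(S\cap V^m)}\cap\overline{\st((K^m\backslash S)\cap V^m)}$. Passing to a saturated $K^*\succ K$ identifies each closure with the true standard part image computed over $K^*$, so by Fact \ref{O-P-st-image-has-no-interior} applied inside $K^*$ the set $E$ has empty interior in $\Q^m$, hence $\dim_\Q(E)<m$. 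One then uses Fact \ref{O-P-partition-I} together with induction on the topological dimension, working on the hull $E^h$ and the restricted set $S\cap E^h$, to reduce to a lower-dimensional instance and ultimately carve $\st(S\cap V^m)$ out as a Boolean combination of $\Q$-definable sets.

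The main obstacle is the recursive boundary analysis. While the closure step is clean, the excess $\overline{\st(S\cap V^m)}\backslash\st(S\cap V^m)$ can have complicated structure supported on the nowhere-dense set $E$. Ensuring that the induction actually terminates with a $\Q$-definable description---not merely a type-definable one---requires verifying at each stage that the restricted problem on $E$ still falls within the scope of Fact \ref{Every-type-over-QP-is-definable} and Fact \ref{O-P-partition-I}, and that the dimension strictly drops so that the recursion halts after finitely many steps. The pairing of the closure characterization coming from definability of types with the two $O$-$P$ facts is the crux of the argument.
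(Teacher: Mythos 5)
This statement is quoted in the paper as a Fact from \cite{O-P} and is not proved there, so there is no internal proof to compare with; judged on its own terms, your argument has a genuine gap. The first half is fine: by Fact \ref{Every-type-over-QP-is-definable} the family $D_k$ is uniformly $\Q$-definable, so $\bigcap_{k}D_k$ is $\Q$-definable, and your verification that $\bigcap_{k}D_k=\overline{\st(S\cap V^m)}$ is correct (a witness $\bar x\in S$ with $v(x_i-a_i)\geq k$, $\bar a\in\Q^m$, automatically lies in $V^m$). The problem is the second half, which is where the actual content lies. The proposed recursion is not well posed: $E^h=\st^{-1}(E)$ is a hull, hence only type-definable and not $K$-definable in general, so $S\cap E^h$ is not a $K$-definable set, and neither the statement being proved nor Facts \ref{Every-type-over-QP-is-definable}, \ref{O-P-partition-I}, \ref{O-P-st-image-has-no-interior} apply to it. No definable lower-dimensional instance of the problem is actually exhibited, no mechanism forces the dimension to drop, and you concede yourself that termination and applicability are unverified. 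As written, the argument establishes only that the \emph{closure} of $\st(S\cap V^m)$ is $\Q$-definable.

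The missing idea is that no boundary analysis is needed, because $\st(S\cap V^m)$ is already closed, i.e.\ equal to $\bigcap_k D_k$ (equivalently, passing to a saturated $K^*\succ K$, as you do when invoking Fact \ref{O-P-st-image-has-no-interior}, does not change the standard part image). Given $\bar a\in\bigcap_k D_k$, consider $T=\{\gamma\in\Gamma_K\mid \exists \bar x\in S\ \bigwedge_{i=1}^{m} v(x_i-a_i)\geq\gamma\}$. This is a downward closed subset of $\Gamma_K$ containing every standard integer, and it is definable in the value group, which for $p$-adically closed fields is stably embedded with induced structure a pure model of Presburger arithmetic (this follows, e.g., from Denef's cell decomposition \cite{Denef-cell-dec}); since the convex hull of $\mathbb Z$ is not a definable cut in any model of Presburger, $T$ contains some $\gamma_0>\mathbb Z$, and a witness $\bar x\in S$ with $v(x_i-a_i)\geq\gamma_0$ lies in $V^m$ with $\st(\bar x)=\bar a$. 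With this overspill step your first paragraph alone proves the Fact; without it, or some substitute for it, the proof is incomplete at its crux.
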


\begin{Lemma}\label{Lemma-definable-partition-I}
Let $f:K^k\longrightarrow K$ be definable in $K$. Then
\begin{itemize}
  \item [(i)] $X_{\infty}=\{a\in \Q^k|\ f(a)\notin V\}$ is definable in $\Q$.
  \item [(ii)] Let $X=\Q^k\backslash X_{\infty}$. Then $g: X\longrightarrow \Q$ given by $a\mapsto \st(f(a))$ is definable in $\Q$
\end{itemize}
\end{Lemma}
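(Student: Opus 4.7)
The plan is to reduce the defining conditions of $X_\infty$ and the graph of $g$ to first-order formulas over $\Q$. The sets $V$ and $\mu$ are not first-order in $K$, but their membership can be characterized by universal quantification over $\Q$, because the valuation maps $\Q \setminus \{0\}$ onto $\z$. Combined with Fact \ref{Every-type-over-QP-is-definable}, this will yield definability in $\Q$.

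For (i), note that $f(a) \notin V$ if and only if $f(a) \neq 0$ and $v(f(a)) \leq n$ for every $n \in \z$. Since $v$ maps $\Q \setminus \{0\}$ onto $\z$, and using the convention $v(0) = +\infty$ (so that $v(0) \leq v(c)$ fails for every $c \neq 0$), this is equivalent to
\[
K \models v(f(a)) \leq v(c) \quad \text{for every } c \in \Q \setminus \{0\}.
\]
Let $\bar b \in K$ be the parameters defining $f$, and let $\phi(\bar x, c, \bar b)$ be the $L_r$-formula expressing $v(f(\bar x)) \leq v(c)$, which exists by Fact \ref{valuation-is-definable} combined with the defining formula of $f$. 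By Fact \ref{Every-type-over-QP-is-definable}, the set
\[
R := \{(a, c) \in \Q^{k+1} \mid K \models \phi(a, c, \bar b)\}
\]
is definable in $\Q$. Hence $X_\infty = \{a \in \Q^k \mid \forall c \in \Q \setminus \{0\} \;\, (a, c) \in R\}$ is definable in $\Q$.

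For (ii), the graph of $g$ equals $\Gamma := \{(a, d) \in \Q^{k+1} \mid f(a) - d \in \mu\}$; this set automatically lies in $X \times \Q$, because $d \in \Q \sq V$ forces $f(a) = d + (f(a) - d) \in V$ whenever $f(a) - d \in \mu$, and for each $a \in X$ the standard part $\st(f(a))$ is the unique $d \in \Q$ with $f(a) - d \in \mu$. As in (i), $f(a) - d \in \mu$ is equivalent to
\[
K \models v(f(a) - d) > v(c) \quad \text{for every } c \in \Q \setminus \{0\}.
\]
Applying Fact \ref{Every-type-over-QP-is-definable} to the $L_r$-formula $\psi(\bar x, d, c, \bar b) := v(f(\bar x) - d) > v(c)$, the set $R' := \{(a, d, c) \in \Q^{k+2} \mid K \models \psi(a, d, c, \bar b)\}$ is definable in $\Q$. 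Therefore $\Gamma = \{(a, d) \in \Q^{k+1} \mid \forall c \in \Q \setminus \{0\} \;\, (a, d, c) \in R'\}$ is definable in $\Q$, which is precisely the graph of $g$.

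I do not anticipate a serious obstacle here; the core insight is the translation of the external quantifier ``for all $n \in \z$''---which is not first-order in $K$---into an internal quantifier ``for all $c \in \Q \setminus \{0\}$'', which becomes first-order once we descend to $\Q$ via Fact \ref{Every-type-over-QP-is-definable}. The only small verification is that the conventions on $v(0)$ make the formulas pick out exactly the desired sets, which is immediate from the case analysis above.
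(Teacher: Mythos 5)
Your proof is correct and follows essentially the same route as the paper: both use Fact \ref{Every-type-over-QP-is-definable} (Delon) to make the trace on $\Q$ of the $K$-definable valuation conditions definable in $\Q$, and then express membership in $V$ (resp.\ $\mu$) by quantifying over nonzero elements of $\Q$ in place of the external quantifier over $\mathbb Z$. Your formula for the graph of $g$ is in fact a slightly more direct version of the paper's, which is only a cosmetic difference.
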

\begin{proof}
By Fact \ref{Every-type-over-QP-is-definable}, there is a $L_{\Q}$-formula $\phi(x,y)$ such that for all $a\in \Q^k$ and $b\in \Q$, we have
\[
\Q\models \phi(a,b)\  \iff \ v(f(a))<v(b).
\]
Hence
\[
a\in X_\infty \iff \Q\models \forall y\phi(a,y),
\]
which shows that $X_\infty$ is definable in $\Q$. Again by Fact \ref{Every-type-over-QP-is-definable}, there is $L_{\Q}$-formula $\psi(x,y_1,y_2)$ such that for all $a\in \Q^k$, $b_1,b_2\in\Q$,
\[
M\models \psi(a,b_1,b_2) \iff v(f(a)-b_1)>v(b_2-b_1).
\]
Therefore
\[
b=\st(f(a))\ \iff\ \Q\models \forall y_1\forall y_2(v(b-y_1)>v(y_1-y_2)\rightarrow \psi(a,y_1,y_2)).
\]
for all $a\in \Q^k$ and $b\in \Q$. We conclude that $g: X\longrightarrow \Q$, $a\mapsto \st(f(a))$ is definable in $\Q$
\end{proof}

\begin{Lemma}\label{a-notin-X(K)}
Let $X\sq \Q^m$ be a clopen subset of $\Q^m$. If $\bar a\in V^m$ and $\st(\bar a)\notin X$, then $\bar a\notin X(K)$.
\end{Lemma}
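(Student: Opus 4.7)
The plan is to use the fact that $X$ being clopen means $\Q^m \setminus X$ is open in $\Q^m$, combined with the elementarity of $\Q \prec K$ and the fact that $\bar{a}$ is infinitesimally close to $\st(\bar{a})$.

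First, I would set $\bar{c} = \st(\bar{a}) \in \Q^m$. By assumption $\bar{c} \notin X$, so $\bar{c}$ lies in the complement $\Q^m \setminus X$, which is open in $\Q^m$ since $X$ is closed. Therefore there exists some $n \in \mathbb{Z}$ such that the basic open neighborhood
\[
B = \{\bar{x} \in \Q^m \mid v(x_i - c_i) \geq n \text{ for } i = 1, \ldots, m\}
\]
is contained in $\Q^m \setminus X$, i.e., $B \cap X = \emptyset$.

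Next, observe that $B$ is defined by an $L_\Q$-formula (with parameters $\bar{c}$ and the fixed integer $n$), and the statement $\forall \bar{x}(\bar{x} \in B \to \bar{x} \notin X)$ is a first-order sentence over $\Q$. Since $K$ is an elementary extension of $\Q$, the same sentence holds in $K$; that is, writing $B(K) = \{\bar{x} \in K^m \mid v(x_i - c_i) \geq n \text{ for } i = 1, \ldots, m\}$, we have $B(K) \cap X(K) = \emptyset$.

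Finally, since $\bar{a} \in V^m$ and $\st(\bar{a}) = \bar{c}$, each $a_i - c_i \in \mu$, which means $v(a_i - c_i) > k$ for every $k \in \mathbb{Z}$; in particular $v(a_i - c_i) \geq n$ for every $i$. Thus $\bar{a} \in B(K)$, and since $B(K) \cap X(K) = \emptyset$, we conclude $\bar{a} \notin X(K)$. No real obstacle arises: the only point requiring care is that the basic neighborhood $B$ must be chosen with parameters from $\Q$ (which is automatic since $\bar{c} \in \Q^m$ and $n \in \mathbb{Z}$), so that elementarity transfers the disjointness from $\Q$ to $K$.
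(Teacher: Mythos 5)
Your proof is correct and follows essentially the same route as the paper: take a basic ($\Q$-definable) neighborhood of $\st(\bar a)$ disjoint from $X$, transfer the disjointness to $K$ by elementarity, and observe that $\bar a$ lies in the transferred neighborhood because each $a_i-\st(a_i)\in\mu$. The only implicit point (shared with the paper) is that $X$ is $\Q$-definable, which is what makes the disjointness statement first-order and gives meaning to $X(K)$.
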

\begin{proof}
As $X$ is clopen and $\st(\bar a)\notin X$, there is $N\in \mathbb Z$ such that
\[
B_{(\bar a, N)}=\{\bar b\in \Q^m|\ \bigwedge_{i=1}^{m} v(b_i-\st(a_i))>N\}\cap X=\emptyset.
\]
So $B_{(\bar a, N)}(K)\cap X(K)=\emptyset$. But  $v(a_i-\st(a_i))>\mathbb Z$, hence
\[
\bar a\in \{\bar b\in K^m|\ \bigwedge_{i+1}^{m} v(b_i-\st(a_i))>N\}=B_{(\bar a, N)}(K).
\]
So $\bar a\notin X(K)$  as required.
\end{proof}

\begin{Lemma}\label{graph-in-variety}
If $X\sq K^m$ and $f:X\longrightarrow K$ are definable in $K$, then there is a polynomial $q(x_1,...,x_m,y)$ such that the graph of $f$ is contained in the variety
\[
\{(a_1,...,a_m,b)\in K^{m+1}|\ q(a_1,...,a_m,b)=0\}.
\]
\end{Lemma}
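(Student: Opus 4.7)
\medskip
\noindent\textbf{Proof plan.}
The plan is to apply Macintyre's quantifier elimination in the language $L=L_r\cup\{P_n:n\in\N^+\}$ to the formula defining $\Gamma_f$, and then to read off $q$ as the product of the polynomials appearing in the resulting equation atoms.

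Let $\phi(\bar x,y)$ be an $L_K$-formula with $\Gamma_f=\phi(K^{m+1})$. By Macintyre's theorem I may assume $\phi$ is quantifier free, and by passing to disjunctive normal form I write $\phi\equiv\bigvee_i\psi_i$, where each $\psi_i$ is a conjunction of atoms of the shape $p=0$, $p\neq 0$, $P_n(p)$ or $\neg P_n(p)$ with $p\in K[\bar x,y]$. I then rewrite every atom $\neg P_n(p)$ as $(p=0)\vee\bigl(p\neq 0\wedge\neg P_n(p)\bigr)$ and redistribute, so that in each resulting disjunct every non-equation atom defines an open subset of $K^{m+1}$ in the valuation topology. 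This uses that $P_n(K)$ is open in $K$ (Hensel's lemma applied to $y^n-c$) and is in fact a finite-index, hence clopen, subgroup of $K^\times$, the latter obtained by transfer from $\Q$.

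The key observation is that for each $\bar a\in X$ the unique value $b=f(\bar a)$ must be forced by a genuine polynomial equation in the DNF. Pick a disjunct $\psi_i$ satisfied by $(\bar a,b)$ and list its equation atoms as $p_{ij}(\bar x,y)=0$. If $p_{ij}(\bar a,y)$ were the zero polynomial in $K[y]$ for every such $j$, then $\{y\in K:\psi_i(\bar a,y)\}$ would be cut out purely by conditions that are open in $y$; being non-empty (it contains $b$), it would then be an infinite open subset of $K$, contradicting the fact that $b$ is the only $y$ with $\phi(\bar a,y)$. Hence some $p_{ij}\in K[\bar x,y]\setminus\{0\}$ has $p_{ij}(\bar a,y)\not\equiv 0$ in $y$ and $p_{ij}(\bar a,b)=0$.

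Finally I take $q(\bar x,y)$ to be the product of all polynomials $p_{ij}(\bar x,y)$ that occur as equation atoms in the DNF and lie in $K[\bar x,y]\setminus\{0\}$; since $K[\bar x,y]$ is an integral domain this $q$ is non-zero, and for every $\bar a\in X$ the previous paragraph supplies a factor vanishing at $(\bar a,f(\bar a))$, so $\Gamma_f\subseteq\{q=0\}$ as required. The most delicate point is the openness argument in the third paragraph: one must check that after splitting the $\neg P_n$ atoms the remaining non-equation atoms really do cut out sets open in the valuation topology, so that without a true $y$-polynomial equation no disjunct can isolate a single point.
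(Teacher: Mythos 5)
Your proof is correct, and its skeleton is the same as the paper's: quantifier elimination in Macintyre's language, a disjunctive normal form whose non-equational atoms cut out open sets (openness of $P_n(K)$), the observation that each disjunct meeting the graph must contribute a genuine polynomial equation, and finally the product of those polynomials. The one genuine difference is how you rule out a disjunct with no nontrivial equation: the paper argues globally, noting that such a disjunct would be a nonempty open subset of $K^{m+1}$ sitting inside the graph $Y$, contradicting $\dim_K(Y)=\dim_K(X)\leq m$; you argue fiberwise, noting that the section over a fixed $\bar a\in X$ would be a nonempty open (hence infinite) subset of $K$ contained in the singleton fiber $\{f(\bar a)\}$. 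Your version is slightly more elementary in that it uses only functionality and the infinitude of nonempty open subsets of $K$, and needs no appeal to the dimension theory of Fact 1.8; the paper's version gets the conclusion in one stroke from $\dim(Y)\leq m$ without specializing $\bar x$. You are also more scrupulous than the paper about the literal output of quantifier elimination: you explicitly split the $\neg P_n$ and $\neq$ atoms so that every non-equational condition is open, whereas the paper silently uses the standard normal form in which only equations and positive $P_n$-atoms occur. Both routes yield a nonzero $q$ (yours by multiplying only the nonzero equation polynomials, the paper's by choosing one nonzero $g_{i_{f(i)}}$ per disjunct), which is what the later applications require.
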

\begin{proof}
Let $Y$ be the graph of $f$. Since $\Th(\Q)$ has quantifier elimination, $Y$ is defined by a disjunction $\bigvee_{i=1}^{s}\phi_{i}(\bar x)$, where each $\phi_{i}(\bar x)$ is a conjunction
\[
(\bigwedge_{j=1}^{l_{i}}g_{i_j}(\bar x,y)=0)\ \wedge \ \bigwedge_{j=1}^{l_{i}}P_{n_{{i}_j}}(h_{{i}_j}(\bar x, y)),
\]
where $g$'s and $h$'s belong to $K[\bar x,y]$. Now each  $P_{n_{{i}_j}}(h_{{i}_j}(\bar x, y))$ defines an open subset of $K^{m+1}$. Since $\dim(Y)\leq m$, we see that for each $i\leq s$, there is $f(i)\leq l_i$ such that $g_{i_{f(i)}}\neq 0$ . Let $q(\bar x,y)=\Pi_{i=1}^{s}g_{i_{f(i)}}(\bar x,y)$. Then
\[
Y\sq \{(a_1,...,a_m,b)\in K^{m+1}|\ q(a_1,...,a_m,b)=0\}
\]
as required.
\end{proof}

\begin{Prop}\label{Prop-no-interior}
If $f:K^m\longrightarrow K$ is definable in $K$. Let $X=\{\bar a\in \Q^m|\ f(\bar  a)\in V\}$.
Then
\[
D_X=\{\bar a\in X|\ \exists \bar b,\bar c\in\st^{-1}(\bar a)\bigg(f(\bar b)-f(\bar c)\notin \mu\bigg)\}.
\]
has no interiors.
\end{Prop}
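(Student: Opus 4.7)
The plan is to argue by contradiction: assume $\Int(D_X)\neq\emptyset$ and fix a $\Q$-definable open set $U\sq D_X$. Applying Fact~\ref{O-P-st-image-has-no-interior} to $Y=\{\bar b\in K^m:f(\bar b)\notin V\}$ (together with the obvious containment $X\sq\st(\{f\in V\}\cap V^m)$) shows that $\st(Y\cap V^m)\cap U$ has empty interior, so after replacing $U$ by $\Int(U\setminus\st(Y\cap V^m))$ (which is $\Q$-definable by Fact~\ref{St-part-image-definable}), I may assume that $f(\bar b)\in V$ for every $\bar b\in\st^{-1}(\bar a)$ and every $\bar a\in U$. Under this reduction, the hypothesis $U\sq D_X$ becomes: for each $\bar a\in U$ there exist $\bar b,\bar c\in\st^{-1}(\bar a)$ with $\st(f(\bar b))\neq\st(f(\bar c))$.

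By Lemma~\ref{graph-in-variety}, fix $q(\bar x,y)\in K[\bar x,y]$ of minimal $y$-degree with $q(\bar b,f(\bar b))=0$ on $K^m$. Using the surjectivity of $v:K\to\Gamma_K$, scale $q$ by an element of $K$ so that all its coefficients lie in $V$ but not all in $\mu$; then $\st(q)\in\Q[\bar x,y]$ is nonzero. Apply Lemma~\ref{partiton-of-definable-functions} to $q$ and iterate Fact~\ref{O-P-partition-I} to shrink $U$, so that for a fixed piece $S$ of the partition and a fixed index $i$, $f$ coincides with a single continuous definable root function $F_i$ on $\st^{-1}(U)$. Shrink $U$ once more to the $\Q$-definable open subset on which $\partial_y\st(q)(\bar a,g(\bar a))\neq 0$, where $g(\bar a)=\st(f(\bar a))$ is $\Q$-definable by Lemma~\ref{Lemma-definable-partition-I}.

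The crux is to show $F_i(\bar b)-F_i(\bar a)\in\mu$ for every $\bar a\in U$ and $\bar b\in\st^{-1}(\bar a)$. Lemma~\ref{mu-equi-on-poly} gives
\[
q(\bar b,F_i(\bar a))\backsim_\mu q(\bar a,F_i(\bar a))=0,\qquad \partial_y q(\bar b,F_i(\bar a))\backsim_\mu\partial_y q(\bar a,F_i(\bar a)),
\]
whence $v(q(\bar b,F_i(\bar a)))>\mathbb Z$ while $v(\partial_y q(\bar b,F_i(\bar a)))=v(\partial_y q(\bar a,F_i(\bar a)))=:N\in\mathbb Z$ (finite by the simple-root choice). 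For each $n\geq N$, Hensel's Lemma applied to $q(\bar b,y)$ at $y=F_i(\bar a)$ with parameter $n$ yields a unique root $\hat r_n$ of $q(\bar b,y)$ with $v(\hat r_n-F_i(\bar a))\geq n+1$; uniqueness within the coarser ball $\{y:v(y-F_i(\bar a))\geq N+1\}$ forces all these roots to coincide with a single $\hat r^*$ satisfying $v(\hat r^*-F_i(\bar a))>\mathbb Z$, hence $\hat r^*-F_i(\bar a)\in\mu$. A Taylor bound $v(q(\bar b,F_i(\bar a)))\geq v(\bar b-\bar a)-C$ for some integer $C$ (coming from the $V$-coefficients of $q$), combined with the refined Hensel estimate $v(F_i(\bar b)-F_i(\bar a))\geq v(q(\bar b,F_i(\bar a)))-N$ obtained from a one-step Newton iteration, gives a Lipschitz-type continuity modulus for $F_i$ at $\bar a$ with constants in $\mathbb Z$; hence for $\bar b\in\st^{-1}(\bar a)$ one has $v(F_i(\bar b)-F_i(\bar a))\geq N+1$, so $F_i(\bar b)$ lies in the Hensel uniqueness ball and must equal $\hat r^*$. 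Therefore $F_i(\bar b)-F_i(\bar a)\in\mu$, contradicting $U\sq D_X$.

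The principal obstacle is the simple-root reduction: if $\{\bar a\in U:\partial_y\st(q)(\bar a,g(\bar a))=0\}$ had $\Q$-definable interior, then $g$ would satisfy a polynomial of strictly smaller $y$-degree arising from a factor of $\gcd_{\Q(\bar x)[y]}(\st(q),\partial_y\st(q))$, and one must carefully lift this factorization back to a $K$-polynomial vanishing on the graph of $f$ in order to contradict the minimality of $\deg_y q$. The remaining $\Q$-definability claims (e.g. of $\st(Y\cap V^m)$) are routine via Fact~\ref{St-part-image-definable}.
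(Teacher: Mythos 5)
There is a genuine gap, and it sits exactly at the two places you flag as the ``crux'' and the ``principal obstacle.'' First, the Hensel step is circular. From $q(\bar b,F_i(\bar a))\backsim_\mu 0$ and $v(\partial_y q(\bar b,F_i(\bar a)))=N\in\mathbb Z$ you do get, by Hensel's Lemma, a unique root $\hat r^*$ of $q(\bar b,\cdot)$ with $v(\hat r^*-F_i(\bar a))>\mathbb Z$; but to identify $f(\bar b)=F_i(\bar b)$ with $\hat r^*$ you need $v(F_i(\bar b)-F_i(\bar a))\geq N+1$, and your ``refined Hensel estimate'' for $F_i(\bar b)$ presupposes that $F_i(\bar b)$ is the root produced by the Newton iteration started at $F_i(\bar a)$ --- which is precisely what is to be shown. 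All you actually know is that $f(\bar b)\in V$ is a root of $q(\bar b,\cdot)$, hence (by Lemma \ref{mu-equi-on-poly}) that $\st(f(\bar b))$ is \emph{some} root of $\st(q)(\bar a,\cdot)$; nothing in your argument rules out that it is a different root from $g(\bar a)$. Indeed the pointwise statement you are trying to prove on all of $U$ (that $f$ is constant modulo $\mu$ on every fiber $\st^{-1}(\bar a)$, $\bar a\in U$) cannot be obtained by local Hensel/continuity arguments alone: a definable $f$ can genuinely jump between root branches within a single fiber (continuity of $F_i$ only gives a modulus $\delta\in\Gamma_K$, possibly nonstandard, so it says nothing at the infinitesimal scale). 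The content of the proposition is that the set of such bad $\bar a$ has empty interior, and the paper gets this not pointwise but globally: after applying Lemma \ref{partiton-of-definable-functions} to both $g$ and $\st(g)$, it partitions the hull into the sets $D_i=\{\bar x:\ v(f(\bar x)-s_i)>2\Delta\}$ and shows $A_0\sq\bigcup_{i\neq j}\st(D_i)\cap\st(D_j)$, which has no interior by Fact \ref{O-P-st-image-has-no-interior}. Your proposal never uses Fact \ref{O-P-st-image-has-no-interior} at this decisive point, and without it the contradiction does not materialize.

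Second, your proposed repair of the simple-root reduction does not work. If $\{\bar a\in U:\ \partial_y\st(q)(\bar a,g(\bar a))=0\}$ has interior, you suggest extracting a factor of $\gcd(\st(q),\partial_y\st(q))$ satisfied by $g$ and ``lifting'' it to a $K$-polynomial of smaller $y$-degree vanishing on the graph of $f$, contradicting minimality of $\deg_y q$. But minimality of $\deg_y q$ is a statement over $K$ about the graph of $f$, while the degenerate factorization lives over $\Q$ after taking standard parts, and there is no lift in general: e.g.\ for $q(\bar x,y)=y^2-\epsilon x_1$ with $\epsilon\in\mu$ and $f$ a Hensel square root of $\epsilon x_1$, one has $\st(q)=y^2$ with $g\equiv 0$ a double root on an open set, yet no polynomial of $y$-degree $1$ over $K$ vanishes on the graph of $f$. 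So the degenerate case --- which is exactly the hard one --- remains unhandled, whereas the paper's argument needs no separability or minimal-degree hypothesis at all: it only normalizes the leading coefficient (Claim 1, via Corollary \ref{f(b)=0-implise-b-in-V}) and then works with root clusters of arbitrary multiplicities. (A smaller point: when you replace $U$ by $\Int(U\setminus\st(Y\cap V^m))$ you should also justify that this interior is nonempty, e.g.\ by a dimension argument in $\Q^m$, since a set with empty interior need not be closed.)
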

\begin{proof}
By Lemma \ref{graph-in-variety}, there is a polynomial
\[g(x_1,...,x_m,y)\in K[x_1,...,x_m,y]\]
such that the graph of $f$ is contained in the variety of $g$. Without loss of generality, we may assume that each coefficient of $g$ is in $V$, otherwise, we could replace $g$ by $g/c$, where $c$ is a coefficient of $g$ with minimal valuation. Moreover, we could assume that at least one coefficient of $g$ is not in $\mu$.

Suppose for a contradiction that $D_X$ contains a open subset of $\Q^m$. Shrink $D_X$ if necessary, we may assume that $D_X\sq X$ is a $\Q$-definable open set in $\Q^m$. By Lemma \ref{partiton-of-definable-functions},  there is a partition $\cal P$ of $D_X(K)\sq K^m$ into finitely many definable subsets $S$, over each of which $g$ has some fixed number $k\geq 1$ of distinct roots in $K$ with fixed multiplicities $m_1,...,m_k$. For any fixed $\bar x_0\in S$,  let the roots of $g(\bar x_0, y)$ be $r_1,...,r_k$, and $e=\max\{v(r_i-r_j)|\ 1\leq i<j\leq k\}$. Then $\bar x_0$  has a neighborhood $N\sq K^m$, $\gamma\in \Gamma_K$, and continuous, definable functions $F_1,...,F_k: S\cap N\longrightarrow K$ such that for each $\bar x\in S\cap N$,  $F_1(\bar x),...,F_k(\bar x)$ are roots of $g(\bar x, y)$ of multiplicities $m_1,...,m_k$ and $v(F_i(\bar x)-r_i)>2e$.

Since $D_X(K)$ is a $\Q$-definable open subset of $X(K)$. By Fact \ref{O-P-partition-I}, some $S\in \cal P$ contains a $\Q$-definable open subset $\psi(K^m)$ of $X(K)$. Where $\psi$ is an $L_{\Q}$-formula. Let $A_0=\phi(\Q^m)$. Then $A_0\sq A$ is an open subset of $\Q^m$, and over $A_0(K)$ we have
\begin{itemize}
  \item [(i)] $g$ has some fixed number $k\geq 1$ of distinct roots in $K$ with fixed multiplicities $m_1,...,m_k$.
  \item [(ii)] For any fixed $\bar x_0\in A_0(K)$,  let the roots of $g(\bar x_0, y)$ be $r_1,...,r_k$, and $e=\max\{v(r_i-r_j)|\ 1\leq i<j\leq k\}$. Then $\bar x_0$  has a neighborhood $N\sq K^m$, $\gamma\in \Gamma_K$, and continuous, definable functions $F_1,...,F_k: A_0(K)\cap N\longrightarrow K$ such that for each $\bar x\in A_0(K)\cap N$,  $F_1(\bar x),...,F_k(\bar x)$ are roots of $g(\bar x, y)$ of multiplicities $m_1,...,m_k$ and $v(F_i(\bar x)-r_i)>2e$.
  \item [(iii)] for any $\bar a\in A_0$, there exist $\bar b,\bar c\in\st^{-1}(\bar a)$ such that $\st(f(\bar b))\neq \st(f(\bar c))$.
\end{itemize}
Suppose that
\[g(x_1,...,x_m,y)=\sum_{i=0}^{n}g_i(x_1,...,x_m)y^i,\]
where each $g_i(\bar x)\in K[\bar x]$. Since the variety $\{\bar a\in \Q^m|\ \st(g_n)(\bar a)=0\}$ has dimension $m-1$, it has no interior. By Fact \ref{O-P-partition-I},
\[
A_0\backslash \{\bar a\in \Q^m|\ \st(g_n)(\bar a)=0\}
\]
contains a open subset of $\Q^m$. Without loss  of generality, we may assume that
\[\{\bar a\in \Q^m|\ \st(g_n)(\bar a)=0\}\cap A_0=\emptyset.\]
Since the family of clopen subsets forms a base for topology on $\Q^m$, we may assume that $A_0$ is clopen.
We now claim that
\begin{Claim1}
For every $\bar a\in A_0(K)$, $g_n(\bar a)\notin \mu$.
\end{Claim1}
\begin{proof}
Otherwise, by Corollary \ref{st(f)(st(s))=st(f(a))}, we have $\st(g_n)(\st(\bar a))=\st(g_n(\bar a))=0$. So $\st(\bar a)\notin A_0$. By Lemma \ref{a-notin-X(K)}, we see that $\bar a\notin A_0(K)$. A contradiction.
\end{proof}
By Claim 1 and Corollary \ref{f(b)=0-implise-b-in-V}, we see that for every $\bar a\in A_0(K)$  and $b\in K$, if $g(\bar a, b)=0$, then $b\in V$. By Corollary \ref{st(f)(st(s))=st(f(a))}, we conclude the following claim
\begin{Claim2}
For every $\bar a\in A_0(K)$ and $b\in K$, if $g(\bar a, b)=0$, then $b\in V$ and
\[\st(g)(\st(\bar a), \st(b))=0.\]
\end{Claim2}
Now $\st(g)$ is a polynomial over $\Q$. Applying Lemma \ref{partiton-of-definable-functions} to $\st(g)$  and  Fact \ref{O-P-partition-I}, and shrink $A_0$ if necessary, we may assume that
\begin{itemize}
  \item $\st(g)$ has some fixed number $d\geq 1$ of distinct roots in $K$ with fixed multiplicities $n_1,...,n_d$ over $A_0$.
  \item Fix some $\bar x_0\in A_0$,  let the roots of $\st(g)(\bar x_0, y)$ (in $\Q$) be $s_1,...,s_d$, and
      \[
      \Delta=\max\{v(s_i-s_j)|\ 1\leq i<j\leq d\}.
      \]
      Then there are  definable continuous functions $H_1,...,H_d: A_0 \longrightarrow \Q$ such that for each $\bar x\in A_0$,  $H_1(\bar x),...,H_d(\bar x)$ are roots of $\st(g)(\bar x, y)$ of multiplicities $n_1,...,n_d$ and $v(H_i(\bar x)-s_i)>2\Delta$.
  \item for any $\bar a\in A_0$, there exist $\bar b,\bar c\in\st^{-1}(\bar a)$ such that $\st(f(\bar b))\neq \st(f(\bar c))$.
\end{itemize}
By Claim 2, we see that for any $\bar x\in A_0(K)$, and $b\in K$, if $g(\bar x,b)=0$, then $b\backsim_\mu H_i(\st(\bar x))$ for some $i\leq d$. As $g(\bar x,f(\bar x))=0$ for all $\bar x\in K^m$, we see that
\begin{Claim3}
For each $\bar x\in A_0(K)$, $f(\bar x)\backsim_\mu H_i(\st(\bar x))$ for some $i\leq d$.
\end{Claim3}
Let $D_i=\{\bar x\in A(K)|\ v(f(\bar x)-s_i)>2\Delta\}$. We claim that
\begin{Claim4}
$A_0(K)=\bigcup_{i=1}^{d} D_i$ and $D_i\cap D_j=\emptyset$ for each $i\neq j$. Namely, $\{D_1,...,D_d\}$ is a partition of $A_0(K)$.
\end{Claim4}
\begin{proof}

Let $\bar x\in A_0(K)$. By Claim 3, there is some $i\leq d$ such that $f(\bar x)\backsim_\mu H_i(\st(\bar x))$. It is easy to see that
\[
v(f(\bar x)-s_i)=v(f(\bar x)-H_i(\st(\bar x))+H_i(\st(\bar x))-s_i)=v(H_i(\st(\bar x))-s_i)>2\Delta.
\]
So $\bar x\in D_i$ and this implies that $A_0(K)=\bigcup_{i=1}^{d} D_i$.

On the other side, if $\bar x\in D_i\cap D_j$ for some  $1\leq i< j\leq d$, we have $v(f(\bar x)-s_i)>2\Delta$ and $v(f(\bar x)-s_j)>2\Delta$, which implies that
\[
v(s_i-s_j)=v(s_i-f(\bar x)+f(\bar x)-s_j)\geq\min\{v(s_i-f(\bar x)),v(f(\bar x)-s_j)\}\geq 2\Delta.
\]
But $v(s_i-s_j)\leq \Delta$. A contradiction.
\end{proof}

\begin{Claim5}
Let $i\leq d$. For any $\bar a, \bar b\in D_i$, if $\bar a\backsim_\mu \bar b$ then $\st(f(\bar a))=\st(f(\bar b))$.
\end{Claim5}
\begin{proof}
Let $\bar x\in D_i$. By Claim 3, there is $j\leq d$ such that $\st(f(\bar x))=H_j(\st(\bar x))$. We see that
 \[
v(f(\bar x)-s_j)=v(f(\bar x)-H_j(\st(\bar x))+H_j\st(\bar x))-s_j)=v(H_j(\st(\bar x))-s_j)>2\Delta.
 \]
So $\bar x\in D_j$. By Claim 4, $i=j$. We conclude  that $\st(f(\bar x))=H_i(\st(\bar x))$ whenever $\bar x\in D_i$.  This complete the proof of Claim 5.
\end{proof}
Recall that for any $\bar a\in A_0$, there exist $\bar b,\bar c\in\st^{-1}(\bar a)$ such that $\st(f(\bar b))\neq \st(f(\bar c))$. By Claim 4 and Claim 5, we see that for each $\bar a\in A_0$, there is $1\leq i\neq j\leq d$ such that $\bar a\in \st(D_i)\cap\st(D_j)$. This means that
\[
A_0\sq \bigcup_{1\leq i\neq j\leq d} \st(D_i)\cap\st(D_j)
\]
By Fact \ref{O-P-st-image-has-no-interior}, each $\st(D_i)\cap\st(D_j)$ has no interior. By Fact \ref{O-P-partition-I}, $A_0$ has no interiors. A contradiction.
\end{proof}

\begin{Cor}\label{Cor-no-interior}
If $f:K^m\longrightarrow K$ is definable in $K$. Let $X_\infty=\{\bar a\in \Q^m|\ f(\bar a)\notin V\}$.
Then
\[
U=\{\bar a\in X_\infty|\ \exists \bar b,\bar c\in\st^{-1}(\bar a)\bigg(f(\bar b)\in V\wedge f(\bar c)\notin V\bigg)\}.
\]
has no interior.
\end{Cor}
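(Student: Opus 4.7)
\textbf{Proof proposal for Corollary \ref{Cor-no-interior}.} The plan is to reduce to Proposition \ref{Prop-no-interior} by replacing $f$ with a definable function $h$ that sends the ``bad'' set $X_\infty$ into $V$ (so Proposition \ref{Prop-no-interior} becomes applicable), while still detecting the jump between $f \in V$ and $f \notin V$.

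Concretely, I would define $h: K^m \longrightarrow K$ by
\[
h(\bar x) = \begin{cases} 1/f(\bar x) & \text{if } f(\bar x) \neq 0,\\ 1 & \text{if } f(\bar x)=0.\end{cases}
\]
This $h$ is $K$-definable (its graph is cut out by the disjunction $(f(\bar x)=0 \wedge y=1) \vee (y \cdot f(\bar x)=1)$). Set $X' = \{\bar a \in \Q^m \mid h(\bar a) \in V\}$ and let
\[
D_{X'} = \{\bar a \in X' \mid \exists \bar b,\bar c \in \st^{-1}(\bar a)\bigl(h(\bar b)-h(\bar c)\notin \mu\bigr)\}.
\]
By Proposition \ref{Prop-no-interior} applied to $h$, $D_{X'}$ has no interior.

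The main step is then to verify $U \subseteq D_{X'}$, whence $U$ has no interior. First, if $\bar a \in U \subseteq X_\infty$, then $f(\bar a)\notin V$ forces $f(\bar a)\neq 0$ and $h(\bar a)=1/f(\bar a)\in \mu \subseteq V$, so $\bar a\in X'$. Next, fix witnesses $\bar b,\bar c\in \st^{-1}(\bar a)$ with $f(\bar b)\in V$ and $f(\bar c)\notin V$. Since $f(\bar c)\notin V$, we have $h(\bar c)=1/f(\bar c)\in\mu$. For $\bar b$ I would run the short case analysis: if $f(\bar b)=0$ then $h(\bar b)=1$; if $f(\bar b)\in\mu\setminus\{0\}$ then $h(\bar b)\notin V$; if $f(\bar b)\in V\setminus\mu$ then $h(\bar b)=1/f(\bar b)\in V\setminus\mu$. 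In every case $h(\bar b)\notin \mu$, so $h(\bar b)-h(\bar c)\notin\mu$, confirming $\bar a \in D_{X'}$.

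The trickiest point, and the reason for the perhaps odd-looking choice $h=1$ at zeros of $f$ rather than $h=0$, is precisely the subcase $f(\bar b)=0$: choosing $h(\bar b)=0$ would collapse $h(\bar b)-h(\bar c)$ into $\mu$ and break the argument. Once this piece of bookkeeping is set up correctly, the rest of the reduction is formal, and the corollary follows immediately from Proposition \ref{Prop-no-interior}.
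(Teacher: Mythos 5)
Your proposal is correct and takes essentially the same route as the paper, which likewise deduces the corollary by applying Proposition \ref{Prop-no-interior} to the reciprocal $g(\bar x)=(f(\bar x))^{-1}$ and noting that any $\bar a\in U$ would then witness the forbidden jump for $g$. Your only real addition is bookkeeping the paper glosses over: setting $h=1$ on the zero set of $f$ so that $h$ is total, and the short case analysis showing $h(\bar b)\notin\mu$ while $h(\bar c)\in\mu$, which makes the inclusion $U\subseteq D_{X'}$ airtight.
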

\begin{proof}
Otherwise, suppose that $U\sq K^m$ is open. Applying  Proposition \ref{Prop-no-interior} to $g(x)=(f(x))^{-1}$, we see that $g(U)\sq V$, and for all $\bar  a\in U$  there are $\bar b,\bar  c\in\st^{-1}(a)$ such that $\st(g(\bar b))\neq 0$ and $\st(g(\bar c))= 0$. A contradiction.
\end{proof}

\begin{Lemma}\label{Dx U are Definable}
Let $f:K^k\longrightarrow K$ be definable in $K$,  $X=\{a\in \Q^k|\ f(a)\in V\}$, and $X_\infty=\{a\in \Q^k|\ f(a)\notin V\}$. Then both
\[
D_X=\{\bar a\in X|\ \exists \bar b,\bar c\in\st^{-1}(\bar a)\bigg(f(\bar b)-f(\bar c)\notin \mu\bigg)\}.
\]
and
\[
U=\{\bar a\in X_\infty|\ \exists \bar b,\bar c\in\st^{-1}(\bar a)\bigg(f(\bar b)\in V\wedge f(\bar c)\notin V\bigg)\}.
\]
are definable sets over $\Q$
\end{Lemma}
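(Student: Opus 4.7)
The plan is to realize $D_X$ and $U$ as intersections of the $\Q$-definable sets $X$ (resp.\ $X_\infty$) with images of standard-part sets $\st(S\cap V^n)$ for suitable $K$-definable $S\sq K^n$. These standard-part images are $\Q$-definable by Fact \ref{St-part-image-definable}, and Lemma \ref{Lemma-definable-partition-I} already provides the $\Q$-definability of $X$ and $X_\infty$. The key device to circumvent the fact that ``$\in\mu$'' and ``$\in V$'' are only type-definable is to introduce an auxiliary parameter $z\in\Q\setminus\{0\}$ and exploit the equality $v(\Q\setminus\{0\})=\z$.

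For $D_X$, observe that for any $\bar b,\bar c\in K^k$, $f(\bar b)-f(\bar c)\notin\mu$ iff there is $n\in\z$ with $v(f(\bar b)-f(\bar c))\leq n$, equivalently iff there is $z\in\Q\setminus\{0\}$ with $v(f(\bar b)-f(\bar c))\leq v(z)$. Set
\[
W=\{(\bar b,\bar c,z)\in K^{2k+1}\mid z\neq 0\wedge v(f(\bar b)-f(\bar c))\leq v(z)\},
\]
a $K$-definable set by Fact \ref{valuation-is-definable}. By Fact \ref{St-part-image-definable}, $\st(W\cap V^{2k+1})\sq \Q^{2k+1}$ is $\Q$-definable. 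I claim
\[
D_X= X\cap \{\bar a\in\Q^k\mid \exists z\in \Q\setminus\{0\}:\ (\bar a,\bar a,z)\in \st(W\cap V^{2k+1})\},
\]
which is manifestly $\Q$-definable. For the forward inclusion, any witnesses $\bar b,\bar c\in\st^{-1}(\bar a)$ with $f(\bar b)-f(\bar c)\notin\mu$ yield an $n\in\z$ with $v(f(\bar b)-f(\bar c))\leq n$, and then $(\bar b,\bar c,p^n)\in W\cap V^{2k+1}$ has standard part $(\bar a,\bar a,p^n)$. For the reverse inclusion, given a lift $(\bar b,\bar c,z')\in W\cap V^{2k+1}$ of $(\bar a,\bar a,z)$ with $z\in\Q\setminus\{0\}$, the hypothesis $\st(z')=z\neq 0$ forces $z'\in V\setminus\mu$ and hence $v(z')=v(z)\in\z$; together with $(\bar b,\bar c,z')\in W$ this gives $v(f(\bar b)-f(\bar c))\leq v(z)\in\z$, so $f(\bar b)-f(\bar c)\notin\mu$ and $\bar a\in D_X$.

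The argument for $U$ is parallel and slightly simpler: since $\bar a\in X_\infty$ satisfies $f(\bar a)\notin V$, we can always take $\bar c=\bar a$ in the definition, reducing $U$ to $\{\bar a\in X_\infty\mid \exists \bar b\in\st^{-1}(\bar a),\ f(\bar b)\in V\}$. The condition $f(\bar b)\in V$ is equivalent to $\exists z\in\Q\setminus\{0\}$ with $v(f(\bar b))\geq v(z)$, so replacing $W$ by $W'=\{(\bar b,z)\in K^{k+1}\mid z\neq 0\wedge v(f(\bar b))\geq v(z)\}$ and running the identical argument yields $U=X_\infty\cap \{\bar a\in\Q^k\mid \exists z\in\Q\setminus\{0\}:\ (\bar a,z)\in\st(W'\cap V^{k+1})\}$, hence $U$ is $\Q$-definable. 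The only delicate point in either case is the observation that $\st(z')\neq 0$ forces $v(z')=v(z)\in\z$, which is the mechanism converting the external ``$\exists n\in\z$'' into an internal $\Q$-existential over $z$; once this is noted, the Lemma is an immediate combination of Fact \ref{St-part-image-definable} and Lemma \ref{Lemma-definable-partition-I}.
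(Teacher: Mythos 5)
Your proof is correct, but it takes a genuinely different route from the paper's. The paper first splits $X$ into $X_0=\{\bar a:\st(f(\bar a))=0\}$ and $X_1=\{\bar a:\st(f(\bar a))\neq 0\}$ (via Lemma \ref{Lemma-definable-partition-I}), then looks at the graphs $Y$ of $f$ and $Z$ of $1/f$, and expresses $D_X$ as $S_1\cup S_2\cup S_3$ where the $S_i$ are defined by counting the fibers $\st(Y)_{\bar a}$ and $\st(Z)_{\bar a}$ of the standard-part images of the two graphs; the reciprocal function is the device that detects lifts with $f(\bar b)\notin V$, and the case of $U$ is dismissed with ``similarly''. You instead encode the relevant bounded-valuation conditions directly into auxiliary $K$-definable sets $W$ and $W'$ carrying an extra coordinate $z$, apply Fact \ref{St-part-image-definable} once to each, and recover the external quantifier ``$\exists n\in\mathbb Z$'' as an internal $\Q$-existential over $z$, using the observation that $\st(z')=z\neq 0$ forces $v(z')=v(z)\in\mathbb Z$; your reduction of $U$ to $\{\bar a\in X_\infty\mid\exists\bar b\in\st^{-1}(\bar a),\ f(\bar b)\in V\}$ by taking $\bar c=\bar a$ is a nice simplification that the paper does not spell out. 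Both arguments rest on the same two ingredients (Fact \ref{St-part-image-definable} and Lemma \ref{Lemma-definable-partition-I}); yours avoids the case-splitting into $X_0,X_1$ and the graph of $1/f$, at the cost of the auxiliary-parameter trick. One small point to make explicit: in $W$ you should either adopt the convention $v(0)=\infty$ (so that $v(f(\bar b)-f(\bar c))\leq v(z)$ with $z\neq 0$ automatically forces $f(\bar b)\neq f(\bar c)$) or add the clause $f(\bar b)\neq f(\bar c)$, since otherwise the reverse inclusion could in principle pick up lifts with $f(\bar b)-f(\bar c)=0\in\mu$; with that said, the argument goes through verbatim.
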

\begin{proof}
Let $X_0=\{\bar a\in X|\ \st(f(\bar a))=0\}$ and $X_1=\{\bar a\in X|\ \st(f(\bar a))\neq 0\}$. As we showed in Lemma \ref{Lemma-definable-partition-I}, both $X_0$ and $X_1$ are $\Q$-definable sets. Let
\[
g:K^k\backslash f^{-1}(0)\longrightarrow K
\]
be the $K$-definable function given by $\bar x\mapsto 1/f(\bar x)$.
Let $Y\sq K^{k+1}$ be the graph of $f$ and  $Z\sq K^{k+1}$ be the graph of $g$. For each $\bar a\in \Q^k$, let\[\st(Y)_{\bar a}=\{b\in \Q|\ (\bar a,b)\in \st(Y)\}\ \  \text{and}\  \st(Z)_{\bar a}=\{b\in \Q|\ (\bar a,b)\in \st(Z)\}.\]
Let
\[
S_1=\{\bar a\in X|\ |\st(Y)_{\bar a}|>1\}, \  S_2=\{\bar a\in X_0|\ |\st(Z)_{\bar a}|\geq 1\},  \ \text{and}\ S_3=\{\bar a\in X_1|\ |\st(Z)_{\bar a}|> 1\}.
\]
We now show that $D_X=S_1\cup S_2\cup S_3$.

Clearly, $S_1$ and $S_3$ are subsets of $D_X$. If $\bar a\in S_2$, then $\st(f(\bar a))=0$ and there is $\bar b\in \st^{-1}(\bar a)$ such that $1/f(\bar b)\in V$, so $f(\bar a)-f(\bar b)\notin \mu$, which implies that $\bar a\in D_X$. Therefore, we conclude that $S_1\cup S_2\cup S_3\sq D_X$.

Conversely, take any $\bar a\in D_X$ and suppose that  $\bar b,\bar c\in\st^{-1}(\bar a)$ such that $f(\bar b)-f(\bar c)\notin \mu$. If both $f(\bar b)$ and $f(\bar c)$ are in $V$, then $\bar a\in S_1$; If $f(\bar b)\notin V$ and $\st(f(\bar a))=0$, then $\bar b\in\mathrm{dom}(g)$. We see that $(\bar a, 0)\in \st(Z)$, so $\bar a\in S_2$; If $f(\bar b)\notin V$ and $\st(f(\bar a))\neq0$, then $\bar a, \bar b\in \mathrm{dom}(g)$, $\st(g(\bar b))=0$ and $\st(g(\bar a))\neq 0$, which implies that  $|\st(Z)_{\bar a}|> 1$, and thus $\bar a\in S_3$. So we conclude that $D_X\sq S_1\cup S_2\cup S_3$ as required.

As $S_1$, $S_2$, and $S_3$ are definable sets over $\Q$, $D_X$ is definable over $\Q$. Similarly, $U$ is definable over $\Q$.
\end{proof}

Suppose that $C\sq \Q^m$, we define the hull $C^h$ by
\[
C^h=\{\bar x\in K^m|\ \st(\bar x)\in C\}.
\]

\begin{Thm}\label{Main-thm-ii}
Let $f:K^m\rightarrow K$ be an $K$-definable function. Then here is a finite partition ${\cal P}$ of $\Q$ into definable sets, where each set in the partition is either open in $\Q^m$ or lacks of interior. On each open set $C\in \cal P$ we have:
\begin{itemize}
\item [(i)] either $f(x)\notin V$ for all $x\in C^h$;
  \item [(ii)] or there is a continuous function $g: C\longrightarrow \Q$, definable in $\Q$, such that $f(x)\in V$ and $\st(f(x))=g(\st(x))$, for all $x\in C^h$.
\end{itemize}
\end{Thm}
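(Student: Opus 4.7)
The partition will be assembled from the sets identified in Proposition~\ref{Prop-no-interior}, Corollary~\ref{Cor-no-interior}, Lemma~\ref{Lemma-definable-partition-I}, and Lemma~\ref{Dx U are Definable}. First I would split $\Q^m$ into $X = \{\bar a \in \Q^m : f(\bar a) \in V\}$ and $X_\infty = \Q^m \setminus X$; both are $\Q$-definable by Lemma~\ref{Lemma-definable-partition-I}. Next I remove the ``bad'' sets $D_X \sq X$ and $U \sq X_\infty$ of Lemma~\ref{Dx U are Definable}: set $Y = X \setminus D_X$ and $Y_\infty = X_\infty \setminus U$. Proposition~\ref{Prop-no-interior} and Corollary~\ref{Cor-no-interior} say that $D_X$ and $U$ have empty interior, while Lemma~\ref{Dx U are Definable} gives that both are $\Q$-definable.

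The content of the removal is that the pointwise versions of (ii) and (i) already hold on $Y$ and $Y_\infty$ respectively. Indeed, for $\bar a \in Y$ the point $\bar a$ itself lies in $\st^{-1}(\bar a)$, so $\bar a \notin D_X$ forces $f(\bar b) - f(\bar a) \in \mu$ for every $\bar b \in \st^{-1}(\bar a)$; since $f(\bar a) \in V$ and $\mu \sq V$, this yields $f(\bar b) \in V$ and $\st(f(\bar b)) = \st(f(\bar a))$. So the assignment $\bar a \mapsto \st(f(\bar a))$ depends only on the equivalence class modulo $\mu$. Symmetrically, for $\bar a \in Y_\infty$, if some $\bar b \in \st^{-1}(\bar a)$ satisfied $f(\bar b) \in V$, then pairing with $\bar c = \bar a$ would place $\bar a$ in $U$, a contradiction; hence $f(\bar b) \notin V$ for every $\bar b \in \st^{-1}(\bar a)$.

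To promote these pointwise statements to hull-level conclusions on open pieces, I would invoke two further tools. By Lemma~\ref{Lemma-definable-partition-I} the map $g_0 : Y \to \Q$, $\bar a \mapsto \st(f(\bar a))$, is $\Q$-definable; applying the continuity lemma above (the analogue of Scowcroft--van den Dries Lemma 1.3) to $g_0$ yields a $\Q$-definable $B_Y \sq Y$, open in $\Q^m$, with $Y \setminus B_Y$ having no interior and $g_0$ continuous on $B_Y$. Separately, set $B_{Y_\infty} = \Int(Y_\infty)$ in $\Q^m$; this is $\Q$-definable because interiority is first-order, and $Y_\infty \setminus B_{Y_\infty}$ has no interior by construction. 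The proposed partition is
\[
{\cal P} = \{\, B_Y,\ B_{Y_\infty},\ Y \setminus B_Y,\ Y_\infty \setminus B_{Y_\infty},\ D_X,\ U \,\},
\]
with the first two pieces open in $\Q^m$ and the last four having no interior. On $B_Y$ take $g = g_0\!\upharpoonright\! B_Y$: for $x \in B_Y^h$ we have $\st(x) \in B_Y \sq Y$, so the pointwise statement at $\st(x)$ gives $f(x) \in V$ and $\st(f(x)) = g(\st(x))$, i.e.\ (ii). On $B_{Y_\infty}^h$, the pointwise statement at $\st(x)$ gives $f(x) \notin V$, i.e.\ (i). The only step requiring attention beyond bookkeeping is verifying that $Y \setminus B_Y$ has empty interior in $\Q^m$, which is exactly what the continuity lemma provides, and that $Y_\infty \setminus \Int(Y_\infty)$ has empty interior, which is immediate; the main conceptual work has already been done in Proposition~\ref{Prop-no-interior} and Corollary~\ref{Cor-no-interior}.
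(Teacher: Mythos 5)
Your proposal is correct and follows essentially the same route as the paper: the same decomposition into $X$, $X_\infty$, $D_X$, $U$, using Proposition~\ref{Prop-no-interior}, Corollary~\ref{Cor-no-interior}, and Lemmas~\ref{Lemma-definable-partition-I} and~\ref{Dx U are Definable}, with the pointwise (i)/(ii) statements holding off the bad sets and an application of a Scowcroft--van den Dries result to extract open pieces where $\st\circ f$ factors continuously through $\st$. The only cosmetic difference is that you invoke the continuity lemma (the analogue of their Lemma 1.3) to get one open set $B_Y$, whereas the paper applies their Theorem 1.1 to partition $X\setminus D_X$ into finitely many pieces on which the induced map is analytic; both suffice since only continuity is required.
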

\begin{proof}
Let $X,\ X_\infty$ be as in Lemma \ref{Lemma-definable-partition-I}, $D_X$ as  in Proposition \ref{Prop-no-interior}, and $U$ as in Corollary \ref{Cor-no-interior}, then $D_X$ and $U$ have no interior, and by Lemma \ref{Dx U are Definable}, they are definable. Now $\{D_X, X\backslash D_X, \ U, X_\infty\backslash U\}$ is a partition of $\Q^m$. Clearly, $\{\Int(X_\infty\backslash U), \ (X_\infty\backslash U)\ \backslash \Int(X_\infty\backslash U)\}$ is a partition of $X_\infty\backslash U$  where $\Int(X_\infty\backslash U)$ is open and $(X_\infty\backslash U)\ \backslash \Int(X_\infty\backslash U)$ lacks of interior.

Let $h:X\backslash D_X\longrightarrow \Q$ be a definable function defined by $x\mapsto \st(f(x))$.
By Theorem 1.1 of \cite{Scowcroft-van den Dries}, there is a finite partition ${\cal P}^*$  of  $X\backslash D_X$ into definable sets, on each of which $h$ is analytic. Each set in the partition is either open in $\Q^m$ or lacks of interior.

Clearly, the partition
\[
{\cal P}=\{D_X, U, \Int(X_\infty\backslash U), \ (X_\infty\backslash U)\ \backslash \Int(X_\infty\backslash U)\}\cup {\cal P}^*
\]
satisfies our condition.
 \end{proof}

We now prove our last result.

\begin{Lemma}\label{dim(st(Z))leq dim(Z)}
Let $Z\sq K^n$ be definable in $K$ of dimension $k<n$, and the projection
\[\pi:(x_1,...x_n)\mapsto (x_1,...,x_k)
\]
is  injective on $Z$. Then $\dim_{\Q}(\st(Z\cap V^n))\leq k$.
\end{Lemma}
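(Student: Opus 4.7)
I proceed by induction on $n$. The base case $n=1$ is immediate: then $k=0$, $Z$ is finite, and so $\st(Z\cap V)\sq\Q$ is finite. For the inductive step, injectivity of $\pi$ on $Z$ realizes $Z$ as the graph of a $K$-definable function $F=(F_{k+1},\ldots,F_n):\pi(Z)\to K^{n-k}$, and I extend each $F_j$ to a $K$-definable function $K^k\to K$. Applying Theorem \ref{Main-thm-2} to every $F_j$ and taking a common refinement yields a finite partition $\mathcal{P}$ of $\Q^k$ into $\Q$-definable pieces, each either open in $\Q^k$ or lacking interior. Setting $X_C:=\st(Z\cap V^n)\cap (C\times\Q^{n-k})$, one has $\st(Z\cap V^n)=\bigsqcup_{C\in\mathcal{P}}X_C$, so by Lemma \ref{dim(X,Y)=max{dim(X),dim(Y)}} it is enough to show $\dim_\Q(X_C)\leq k$ for each $C\in\mathcal{P}$.

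For an open piece $C$, Theorem \ref{Main-thm-2} gives either case (i), which forces $X_C=\emptyset$, or case (ii), producing for each $j>k$ a continuous $\Q$-definable $g_j:C\to\Q$ with $\st(F_j(x))=g_j(\st(x))$ on $C^h$. In case (ii), if $a=\st(b)\in X_C$ with $b\in Z\cap V^n$, then $\pi(b)\in C^h$ (because $\st(\pi(b))=\pi(a)\in C$), and so $a_j=\st(F_j(\pi(b)))=g_j(\pi(a))$ for $j>k$. Hence $X_C$ is contained in the graph of $(g_{k+1},\ldots,g_n):C\to\Q^{n-k}$, and Corollary \ref{dim(X)=dim(f(X))-ii} gives $\dim_\Q(X_C)\leq\dim_\Q(C)\leq k$.

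For a non-open piece, put $l:=\dim_\Q(C)<k$. Iterating Corollary \ref{1-1-projection}, I reduce to the case in which $C$ itself admits an injective projection onto some $l$ of its $k$ coordinates, and after reordering these are the first $l$, so the coordinates $l+1,\ldots,k$ of points of $C$ are determined by the first $l$. Let $\tilde{\sigma}$ denote the coordinate drop removing positions $l+1,\ldots,k$, acting both as $\Q^n\to\Q^{l+n-k}$ and $K^n\to K^{l+n-k}$. Then $\tilde{\sigma}$ is injective on $X_C$, so $\dim_\Q(X_C)=\dim_\Q(\tilde{\sigma}(X_C))$ by Corollary \ref{dim(X)=dim(f(X))-ii}; and since $\tilde{\sigma}$ commutes coordinate-wise with $\st$, $\tilde{\sigma}(X_C)\sq\st(W\cap V^{l+n-k})$, where $W:=\tilde{\sigma}(Z)\sq K^{l+n-k}$ is $K$-definable with $\dim_K(W)\leq k$ by Lemma \ref{dim(X)>dim(f(X))}. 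If $l+n-k\leq k$, then $\dim_\Q(\st(W\cap V^{l+n-k}))\leq l+n-k\leq k$ trivially, since the ambient space itself has dimension $\leq k$; otherwise $l+n-k>k\geq\dim_K(W)$, and applying Corollary \ref{1-1-projection} to $W$ (with a coordinate permutation if needed) partitions $W$ into finitely many pieces each satisfying the hypotheses of the present Lemma in ambient dimension $l+n-k<n$, so the inductive hypothesis yields $\dim_\Q(\st(W\cap V^{l+n-k}))\leq\dim_K(W)\leq k$. The main obstacle is exactly this treatment of the non-open pieces, which is resolved by the coordinate drop combined with the induction on $n$.
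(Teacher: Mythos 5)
Your proof is correct (modulo small bookkeeping you gloss over), but it takes a genuinely different route from the paper. The paper's own argument is purely algebraic and self-contained: it uses definable Skolem functions (Fact \ref{Skolem-functions}) to write $Z$ as the image of a section $f=(f_1,\dots,f_n)$ of $\pi$, puts the graph of each $f_i$ inside a variety $V(F_i)$ via Lemma \ref{graph-in-variety} with all coefficients normalized into $V$, applies Corollary \ref{st(f)(st(s))=st(f(a))} to conclude that $\st(Z\cap V^n)$ lies in the $\Q$-variety cut out by the $\st(F_i)$'s, in which the last $n-k$ coordinates are algebraic over the first $k$ together with the coefficients, and then bounds the dimension by Fact \ref{alg-dimension and topological dimension}(v); Theorem \ref{Main-thm-ii} is never used. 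You instead view $Z$ as the graph of $(F_{k+1},\dots,F_n)$ over $\pi(Z)$ and invoke Theorem \ref{Main-thm-ii} coordinate-wise, which is legitimate and non-circular since that theorem is proved earlier and independently of this Lemma; on open pieces of the common refinement your case (i)/(ii) analysis is right (note that a nonempty open refined piece necessarily sits inside an \emph{open} piece of each coordinate partition, which is what licenses applying (i)/(ii) there), and your coordinate-dropping trick plus induction on the ambient dimension handles the non-open pieces correctly, since $l<k$ forces $l+n-k<n$ and $\tilde\sigma$ is injective on $X_{C'}$ while $\dim_K(\tilde\sigma(Z))\leq k$. The only points to tighten are routine: Corollary \ref{1-1-projection} gives injective projections only on pieces of maximal dimension, so for $W=\tilde\sigma(Z)$ (as you already did for $C$) you must iterate the corollary on the lower-dimensional remainders before quoting the inductive hypothesis, and you silently use monotonicity of dimension under inclusion of definable sets, which is immediate from the definition. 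In exchange for relying on the heavier Theorem \ref{Main-thm-ii} and an induction with some partition bookkeeping, your route avoids the polynomial manipulations (normalizing coefficients into $V$, standard parts of the $F_i$) entirely; the paper's route is shorter and uses only elementary facts proved in Section 2.2, but is less conceptual in that it does not exhibit the connection with the standard-part theorem for definable functions that your argument makes explicit.
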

\begin{proof}
As $\pi$ is injective on $X$, there is a definable function
\[
f=(f_1,...,f_n): K^k\longrightarrow K^n
\]
 such that
\begin{itemize}
  \item $f(\pi(\bar x))=(f_1(\pi(\bar x)),...,f_n(\pi(\bar x)))=\bar x$ for all $\bar x\in Z$;
  \item $f(\bar y)=(0,...,0)$ for all $\bar y\in K^k\backslash \pi(X)$.
\end{itemize}
By Lemma \ref{graph-in-variety}, for each $i\leq n$, there is a polynomial $F_i(\bar y,u)$ such that the graph of $f_i$ is contained in the variety
\[
V(F_i)=\{(\bar y,u)\in K^{k+1}|\ F_i(\bar y,u)=0\}
\]
of $F_i$. We assume that each coefficient belongs to $V$.
It is easy to see that for each
\[
(a_1,...,a_n)\in Z\cap V^n,
\]
 we have $f_i(\pi(a_1,...,a_n))=a_i$. So $F_i(a_1,...,a_k,a_i)=0$. By Corollary \ref{st(f)(st(s))=st(f(a))},
\[
 \st(F_i)(\st(a_1),...,\st(a_k),\st(a_i)))=0.
\]
 So $\st(Z\cap V^n)$ is contained in the variety
\[V(\st(F_1),...,\st(F_n))=\bigg\{(a_1,...,a_n)\in \Q^n|\ \bigwedge_{i=1}^n\bigg( \st(F_i)(\st(a_1),...,\st(a_k),\st(a_i)))=0\bigg)\bigg\}.\]
Let $A\sq \Q$ be the collection of all coefficients from $\st(F_i)$'s. Then for each
\[
(a_1,...,a_n)\in V(\st(F_1),...,\st(F_n)),
 \]
 we see that $a_i$ is a root of $F_i(a_1,...,a_k,u)$, and hence $a_i\in \acl(A,a_1,...,a_k)$, where $i\leq n$. This implies that
\[
\dim(a_1,...,a_n/A)=\dim(a_1,...,a_k/A)\leq k
\]
for all $(a_1,...,a_n)\in V(\st(F_1),...,\st(F_n))$. By Fact \ref{alg-dimension and topological dimension} (v), we see that
\[
\dim_{\Q}(V(\st(F_1),...,\st(F_n)))=\max\big\{\dim(a_1,...,a_n/A)|\ (a_1,...,a_n)\in V(\st(F_1),...,\st(F_n))\big\}\leq k.
\]
So $\st(Z\cap V^n)\leq k$ as required.
\end{proof}

\begin{Thm}
Let $Z\sq K^n$ be definable in $K$. Then $\dim_{\Q}(\st(Z\cap V^n))\leq \dim_K(Z)$.
\end{Thm}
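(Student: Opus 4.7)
The plan is to induct on $d = \dim_K(Z)$, reducing everything to the previous lemma via the partition supplied by Corollary \ref{1-1-projection}. The key input is that this lemma already handles the case of a set on which a coordinate projection onto $K^k$ (with $k < n$) is injective, and Corollary \ref{1-1-projection} produces exactly such pieces (of maximal dimension) in a finite partition.

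First I would dispose of the trivial boundary case $d = n$: here $\st(Z \cap V^n) \sq \Q^n$, so $\dim_{\Q}(\st(Z \cap V^n)) \leq n = d$ automatically. The base case $d = 0$ is also immediate, since $Z$ is then finite and hence so is $\st(Z \cap V^n)$.

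For the inductive step with $0 < d < n$, I would apply Corollary \ref{1-1-projection} to write $Z = S_1 \cup \cdots \cup S_t$ as a finite disjoint union of $K$-definable pieces, where each $S_j$ with $\dim_K(S_j) = d$ admits an injective projection onto $k = d$ suitable coordinate axes. Pieces with $\dim_K(S_j) < d$ get handled by the induction hypothesis: $\dim_{\Q}(\st(S_j \cap V^n)) \leq \dim_K(S_j) < d$. For a piece $S_j$ of dimension $d$, after applying the coordinate permutation $\sigma$ that sends the distinguished $d$ axes to the first $d$ — noting that $\sigma$ preserves $V^n$, commutes with $\st$, and preserves both $\dim_K$ and $\dim_{\Q}$ — I am exactly in the hypothesis of Lemma \ref{dim(st(Z))leq dim(Z)}, which yields $\dim_{\Q}(\st(\sigma(S_j) \cap V^n)) \leq d$, hence $\dim_{\Q}(\st(S_j \cap V^n)) \leq d$.

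To assemble the bound, I use that $\st(Z \cap V^n) = \bigcup_{j} \st(S_j \cap V^n)$ (standard-part distributes over finite unions) and that each $\st(S_j \cap V^n)$ is $\Q$-definable by Fact \ref{St-part-image-definable}. By Lemma \ref{dim(X,Y)=max{dim(X),dim(Y)}},
\[
\dim_{\Q}(\st(Z \cap V^n)) = \max_j \dim_{\Q}(\st(S_j \cap V^n)) \leq d = \dim_K(Z),
\]
which completes the induction. The only substantive step is Lemma \ref{dim(st(Z))leq dim(Z)} itself, which was already proved; here the main thing to verify is simply that coordinate permutations and the formation of $\st$-image commute properly with the decomposition, which is routine.
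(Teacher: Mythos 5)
Your argument is correct and follows essentially the same route as the paper: decompose $Z$ via Corollary \ref{1-1-projection}, use that $\st$ and $\dim$ behave well under finite unions, and invoke Lemma \ref{dim(st(Z))leq dim(Z)} on the pieces of maximal dimension (the $k=n$ case being trivial). Your explicit induction on $\dim_K(Z)$ merely makes precise how the lower-dimensional pieces are absorbed, which the paper compresses into ``we may assume,'' so there is no substantive difference.
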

\begin{proof}
Since $\st(X\cup Y)=\st(X)\cup\st(Y)$ and $\dim(X\cup Y)=\max\{\dim(X),\dim(Y)\}$ hold for all definable $X,Y\sq K^n$. Applying Corollary \ref{1-1-projection}, we many assume that $\dim(Z)=k$ and  $\pi:(x_1,...x_n)\mapsto (x_1,...,x_k)$ is  injective on $Z$. If $k=n$, then
\[\dim_{\Q}(\st(Z\cap V^n))\leq n\]
 as $\st(Z\cap V^n)\sq \Q^n$. If $k<n$, then by Lemma \ref{dim(st(Z))leq dim(Z)},
 \[\dim_{\Q}(\st(Z\cap V^n))\leq k\]
  as required.
\end{proof}

\end{document}